\newtheorem{theorem}{Theorem}
\newtheorem{lemma}{Lemma}
\newtheorem{definition}{Definition}
\newtheorem{proposition}{Proposition}
\begin{document}

\begin{frontmatter}

\title{Isoperimetric inequality for nonlocal bi-axial discrete perimeter}
\runtitle{Isoperimetric inequality for nonlocal bi-axial discrete perimeter}

\begin{aug}
\author[A]{\inits{V.}\fnms{Vanessa}~\snm{Jacquier}\ead[label=e1]{vanessa.jacquier@unipd.it}},
\author[B]{\inits{C.}\fnms{Cristian}~\snm{Spitoni}\ead[label=e2]{s.spitoni@uu.nl}}
\and
\author[B]{\inits{W.}\fnms{Wioletta M.}~\snm{Ruszel}\ead[label=e3]{w.m.ruszel@uu.nl}}
\address[A]{University of Padova, via Trieste 63, 35131, Padova, Italy.\printead[presep={,\ }]{e1}}

\address[B]{Utrecht University, Budapestlaan 6, 3584 CD Utrecht, The Netherlands.\printead[presep={,\ }]{e2,e3}}
\end{aug}

\begin{abstract}
 In the present manuscript we address and solve for the first time a nonlocal discrete isoperimetric problem. We consider indeed a generalization of the classical perimeter, what we call a \emph{nonlocal bi-axial discrete perimeter},  where, not only the external boundary of a polyomino $\mathcal{P}$ contributes to the perimeter, but all internal and external components of $\mathcal{P}$.  
Furthermore,  we find and characterize its minimizers in the class of polyominoes with fixed area $n$. 
Moreover, we explain how the solution of the nonlocal discrete isoperimetric problem is related to the rigorous study of the metastable behavior of a \emph{long-range bi-axial Ising model}.
\end{abstract}

\begin{keyword}[class=MSC]
\kwd{52C20, 52A40, 82B20, 82C26, 60K35}
\end{keyword}

\begin{keyword}
Isoperimetric inequality, polyominoes, nonlocal discrete perimeter, long-range Ising model, metastability
\end{keyword}

\end{frontmatter}

\section{Introduction}
Finding the set of minimal perimeter enclosing a prescribed volume is in a nutshell the classical \emph{isoperimetric} problem.  As proven in \cite{degiorgi} for the Euclidean space $\mathbb{R}^n$, the unique solution of the classical isoperimetric problem is the $n$-sphere.
 
A natural question is about the change of the optimal solutions in the presence of a \emph{nonlocal} term in the perimeter. In \cite{caffa2010}, for instance, the notion of \emph{nonlocal} perimeter has been introduced, and the study of the corresponding minimizing isoperimetric sets has been considered. In particular,  in \cite{figa2015} a sharp quantitative isoperimetric inequality for nonlocal $s$-fractional  $P_s(E)$ perimeters was given for the functional:
$$P_s(E):=\int_E \int_{E^c} \frac{d x d y}{\|x-y\|^{n+s}} $$
where $E\subset \mathbb{R}^n$ and $s\in (0,1)$.  As a consequence of the isoperimetric inequality given in \cite{figa2015}, the $n$-spheres are still the unique nonlocal minimal surface among sets with the same volume, where a nonlocal minimal surface is defined as the boundary of a set which minimizes the nonlocal perimeter.  Nonlocal minimal surfaces have been the object of extensive research: for example, they have been shown to arise as scaling limit of long-range phase coexistence models (see \cite{savin2012}), or as continuous approximations of interfaces of long-range Ising models (see \cite{Cozzi2017}).

Moreover, the competition between short-range and long-range interactions appears in several applications in biology and in physics, in particular in relation with the formation of patterns (see \cite{chak2011},\cite{de2000}). In the presence of a strong short-range attracting force (i.e., the perimeter) and a weak repulsive long-range force (i.e., nonlocal term) it has been shown that periodic stripes are the minimizers of the functional. This problem has been studied both in the continuum (see, for instance, \cite{giuliani2012}, \cite{daneri2019}) and in the discrete case (see~\cite{giuliani2016}).

In the present manuscript, we consider a discrete variational problem for the \emph{nonlocal bi-axial discrete} perimeter $Per_\lambda(\mathcal{P})$, defined as in (\ref{def:per}), for a general two-dimensional \emph{polyomino} $\mathcal{P}$. This type of generalized perimeter takes into account all contributions from the interactions between sites inside and outside the polyomino, weighted inversely proportional to the horizontal and vertical distances raised to the power $\lambda$; consequently, it does not depend solely on the interaction between interior and exterior boundary sites, as is the case for the usual perimeter. The aim of our paper is to find and characterize the minimizers of $Per_\lambda$  among the sets of all the polyominoes with fixed area $n\in\mathbb{N}$. 

Our main result is contained in Theorem~\ref{main_theorem},  stating that the set of these minimizers consists of squares, quasi-squares or rectangles with (or without) protuberances, namely strips of unit squares attached along one side.
In particular, we will show that if a square (quasi-square) and a rectangle with protuberances have the same area but different classical perimeter, then the square (quasi-square) will have smaller nonlocal perimeter. On the other hand, if their respective classical perimeters are equal, then either shape can be the minimum. Differently from the classical two-dimensional isoperimetric problem studied in \cite{alonso1996three}, the nonlocal term will introduce an anisotropy in the system: the quasi-square minimizers with a protuberance, for instance,  must have the protuberance along the shorter side.

The choice of the fractional perimeter is motivated by the form of the Hamiltonian $H^\lambda$ defined in \eqref{hamiltonian_function} of the \emph{two-dimensional long-range Ising model with bi-axial interactions} (i.e., with non-null interaction along the horizontal and the vertical directions), studied in  \cite{coquille2018}. There is a well known relation between long and short-range Ising models: many phenomena which appear for short-range Ising models in $d$ dimensions appear for long-range Ising models in $d-1$ dimensions, such as phase transitions or metastability phenomena (see, for instance, \cite{coquille2018, Affonso,vanenter2019}). In \cite{coquille2018} the authors studied the question of existence of Dobrushin states in two-dimensions, which are extremal, translation invariant Gibbs states. While it is known that Dobrushin states do not exist in two-dimensions for the nearest-neighbour case, the authors proved that they do exist for the two dimensional long-range models, including the bi-axial long-range interaction treated in the present manuscript. 

Moreover, axial Ising models are prototypes of complex spatially modulated magnetic superstructures in crystals. For example, in \cite{elliott1961} the \emph{Axial Next-Nearest Neighbor Ising model} (ANNNI)  was used to explain the existence of modulated phases in rare earth compounds. Furthermore, in real systems, higher-order neighbor interactions are often present, so that extensions of the ANNNI to the third-nearest-neighbor Ising (A3NNI) have been analyzed in \cite{randa1985}. Hence, the Hamiltonian $H^\lambda$  considered in the present manuscript can be considered to be the long-range extension of the Ising-like axial interaction.

A natural application of our result is in the context of the rigorous study of  metastability for the two-dimensional long-range Ising models. In \cite{vanenter2019} the metastability problem was solved in one dimension for a general class of long-range Ising interactions, and the critical configurations were completely characterized. However, at the moment, no rigorous results are known for a two-dimensional long-range Ising model. Our characterization of the solutions of the nonlocal discrete isoperimetric problem represents a key first step for finding the typical configurations that trigger the nucleation process.
 In fact, we will show in Section~\ref{s:application} that the solution of the variational problem for $Per_\lambda$ allows us to find the minimal energy configurations for the two-dimensional long-range Ising model with bi-axial interactions, in the subset of the state space with fixed magnetization.  
 Following the so-called \emph{path-wise approach} to metastability (see  \cite{olivieri2005} and \cite{manzo2004}), the crucial idea is finding the optimal path connecting the metastable state to the stable one and computing its energy height. Therefore, complete characterization of the energy minimizers in any \emph{foliation} with constant magnetization (i.e., with a constant number of plus spins) is a crucial ingredient for solving this variational problem on the entire path space.

The novelty of the paper lies in the fact that we have addressed and solved for the first time a nonlocal discrete isoperimetric problem.
The proof consists of several crucial observations. First, disconnected polyominoes have always larger nonlocal perimeter than their connected counterparts. Then, a minimizer of the nonlocal perimeter has to have a convex shape, so that concave polyominoes can be discarded in the minimization process.  We call a polyomino concave if it contains a vertical (horizontal) strip which is disconnected. In fact, we will use the property that translating unit squares in the polyomino horizontally (vertically) will either leave the nonlocal perimeter invariant, in case the lengths of horizontal (vertical)  stripes stay the same,  or decrease, in case we translate unit squares in order to fill holes, by definition present in a concave shape. Finally, within the class of convex polyominoes, by using monotonicity arguments and induction, we can characterize the set polyominoes with nonlocal minimal perimeter.

The paper is organized as follows. In Section \ref{sec:secnot} we will introduce all the notation, define the necessary quantities, and state the main result of the manuscript (i.e., Theorem~\ref{main_theorem}). Section \ref{sec:proofs} contains the proof of the main theorem which is based on several propositions. Finally, we will discuss in Section \ref{s:application} how our result is connected to the rigorous study of the metastable behavior of a bi-axial long-range Ising model.
We have postponed some auxiliary derivations and proofs to the Appendix.

\section{Notation and Main Result}\label{sec:secnot}

In this section we introduce the notation used in the paper and we state the main result.

Let $Q(x)\subset \mathbb{R}^2$ denote a \emph{unit square} of area one, such that the center point $x$ belongs to $\mathbb{Z}^2$ and corners to the dual lattice $(\mathbb{Z}+\frac{1}{2})^2$. We call $e_1, e_2$ the standard orthonormal basis of $\mathbb{R}^2$.
If two unit squares, $Q(x), Q(y)$, share a side, then we will call the union $Q(x) \cup Q(y)$ \emph{connected}. 

\begin{definition}
A polyomino $\mathcal{P}$ is a finite union of unit squares. The area $a_{\mathcal{P}}$ of the polyomino equals the number of its unit squares. Let $\mathcal{M}_n$ denote the set of all polyominos with area $n \in \mathbb{N}$.
\end{definition}

Note that, equivalently to \cite{alonso1996three}, a polyomino does not need to be a connected set. 
The {\it classical perimeter} $per (\mathcal{P})$ of a polyomino $\mathcal{P}$ is the length of its boundary. By abuse of notation, we will use the term {\it perimeter} for both the closed path and its length.  In the classical case, the perimeter of a polyomino $\mathcal{P}$ only depends on the squares $Q(x)$ which share a side with $\mathbb{R}^2\setminus \mathcal{P}$, see Figure \ref{fig:polyomino} for an example. 
\begin{figure}[htb!]
\begin{center}
    \includegraphics[scale=0.5]{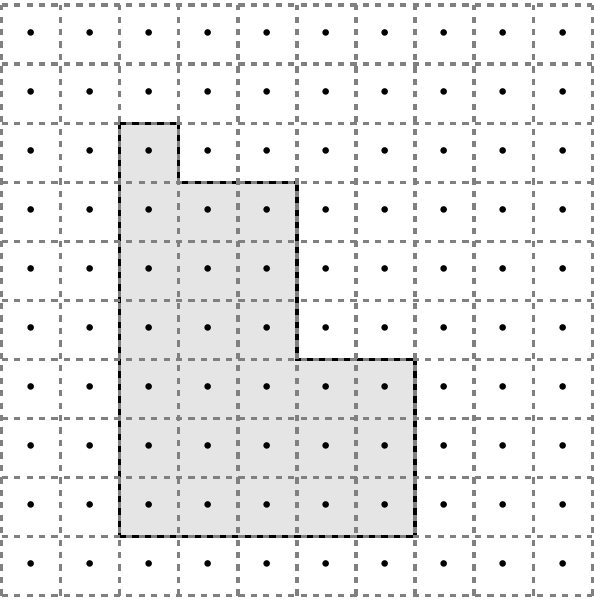}
    \caption{An example of polyomino with area 25 and classical perimeter 24.}
 \label{fig:polyomino}
\end{center}
\end{figure}
When we will refer to the Euclidean distance, we will write $d_E$. 
Given a polyomino $\mathcal{P}$ we define the \emph{internal boundary}  $\partial^- \mathcal{P}$ of $\mathcal{P}$ the set of the unit squares with centers at distance one from $\mathbb{Z}^2 \cap \mathcal{P}^c$, i.e. $\partial^- \mathcal{P}=\{ x \in \mathbb{Z}^2 \cap \mathcal{P} \, : \, d_E(x,y)=1 \text{ with } y \in \mathbb{Z}^2 \cap \mathcal{P}^c\}$, where $\mathcal{P}^c = \mathbb{R}^2 \setminus \mathcal{P}$. 

We define a \emph{row} $r$ and a \emph{column} $c$ the following sets of unit squares. Let $x=(x_1,x_2) \in \mathbb{Z}^2$, then we can define
\begin{align*}
    r_i:=\bigcup_{x_2 \in \mathbb{Z}}Q(i,x_2), \\
    c_i:=\bigcup_{x_1 \in \mathbb{Z}} Q(x_1,i).
\end{align*}
In the sequel we will drop the explicit dependence on $i$ in the notation. For instance, with an abuse of notation, the sum $\sum_{r}f(r)$ should be interpreted as $\sum_{i= - \infty}^\infty f(r_i)$.\\
A (horizontal or vertical) {\it strip} of $\mathcal{P}$ is a subset of connected unit squares with centers in $\mathcal{P}\cap \mathbb{Z}^2$ such that they share the same column or row, i.e. there exists a $z\in  \mathcal{P}\cap \mathbb{Z}^2$ such that
\begin{align}
&S_v= \bigcup \{ Q(x_k) \in \mathcal{P} \, | \, x_k=z+ke_2 \in \, \mathcal{P}\cap \mathbb{Z}^2 \text{ with } k \in \mathbb{Z}, \, \text{ and } d_E(x_k,x_{k+1})=1 \}, \\
&S_h= \bigcup \{ Q(x_k) \in \mathcal{P} \, | \, x_k=z+ke_1 \in \, \mathcal{P}\cap \mathbb{Z}^2 \text{ with } k \in \mathbb{Z}, \, \text{ and } d_E(x_k,x_{k+1})=1 \}. \label{def:strips_row}
\end{align}
The \emph{length} of the strip $S$ is the number of the squares in $S$.
For clarity's sake, we will drop the dependence on $z$ in the sequel.

\begin{definition}
We call \emph{single protuberance of $\mathcal{P}$} a unit square with only one unit edge shared with other unit squares of $\mathcal{P}$.
A \emph{$k$-protuberance} (or a \emph{protuberance with cardinality $k$}) is a strip of length $k$, $S:=\bigcup_{i=1}^k Q(x_i) \subset \mathcal{P}$, such that each unit square $Q(x_i)$ composing it shares a unit edge with $Q(x_{i+1})$, one with $\mathcal{P} \setminus S$, and one with $\mathcal{P}^c$ for $i=1,...,k-1$.
A \emph{$k$-protuberance} (or a \emph{protuberance with cardinality $k$}) is a strip of length $k$, $S:=\bigcup_{i=1}^k Q(x_i) \subset \mathcal{P}$, such that for $i=1,...,k-1$ each unit square $Q(x_i)$ composing it shares a unit edge with $Q(x_{i+1})$, one with $\mathcal{P} \setminus S$, and one with $\mathcal{P}^c$; moreover, $Q(x_1)$ (resp. $Q(x_k)$) shares a unit edge with $Q(x_{2})$ (resp. $Q(x_{k-1})$), one with $\mathcal{P} \setminus S$, and two with $\mathcal{P}^c$.
See the center and right-hand side of Figure \ref{fig:polyomini_Mn} for an example.
\end{definition}

A polyomino with a rectangular shape is called a \emph{rectangle}, and we denote it by $\mathscr{R}_{a,b}$ where $a,b$ are the side lengths. Moreover, we call a \emph{square} (resp. \emph{quasi-square}) a rectangle with side length $a=b$ (resp. $a=b-1$) and we denote it by $\mathcal{Q}_a$ (resp. $\mathscr{R}_{a,a+1}$).
Finally, we denote a rectangle (resp. a square) with side lengths $a,b$ (resp. $a$) and a $k$-protuberance attached along the shortest side by $\mathscr{R}_{a,b}^k$ (resp. $\mathcal{Q}_a^k$). 

Recall that we can write any integer number $n$ as either $n=l^2+k_1$ or $n=l(l+1)+k_2$ for some $l,k_1,k_2 \in \mathbb{N}$ such that $0 \leq k_1 \leq l-1$ and $0 \leq k_2 \leq l$. Let $k\in \mathbb{N}$ such that $0 \leq k \leq a-1$.
Hence, we define the following sets: 
\begin{align}
    &\mathscr{M}_{n}:= 
    \begin{cases}
      \left  \{ \mathscr{Q}_l^{k_1} \in \mathcal{M}_n \right \} 
        \cup \left\{ \mathscr{R}_{a,b}^{k} \in \mathcal{M}_n \, | \, per(\mathscr{R}_{a,b}^{k})=per(\mathscr{Q}_l^{k_1}) \right \} \,\, &\text{if} \,\, n=l^2+k_1=ab+k, \\
   \left \{ \mathscr{R}_{l,l+1}^{k_2} \in \mathcal{M}_n \right \} 
        \cup \left\{ \mathscr{R}_{a,b}^{k} \in \mathcal{M}_n \, | \, per(\mathscr{R}_{a,b}^{k})=per(\mathscr{R}_{l,l+1}^{k_2}) \right\} \,\, &\text{if} \,\, n=l(l+1)+k_2=ab+k,
        \end{cases} 
        \label{minimizers1} \\
    &\mathscr{M}_n^{\text{ext}}:=\mathscr{M}_n \cup \left\{\mathcal{P} \in \mathcal{M}_n \setminus \mathscr{M}_n \, | \, \mathcal{P}=\mathscr{R}^k_{a,b} \right \}. \label{minimizers2}
\end{align}
In Figure~\ref{fig:polyomini_Mn} we consider, as an example, three different polyominoes belonging to the set $\mathscr{M}_n$.
\begin{figure}[htb]
\begin{center}
    \includegraphics[scale=0.55]{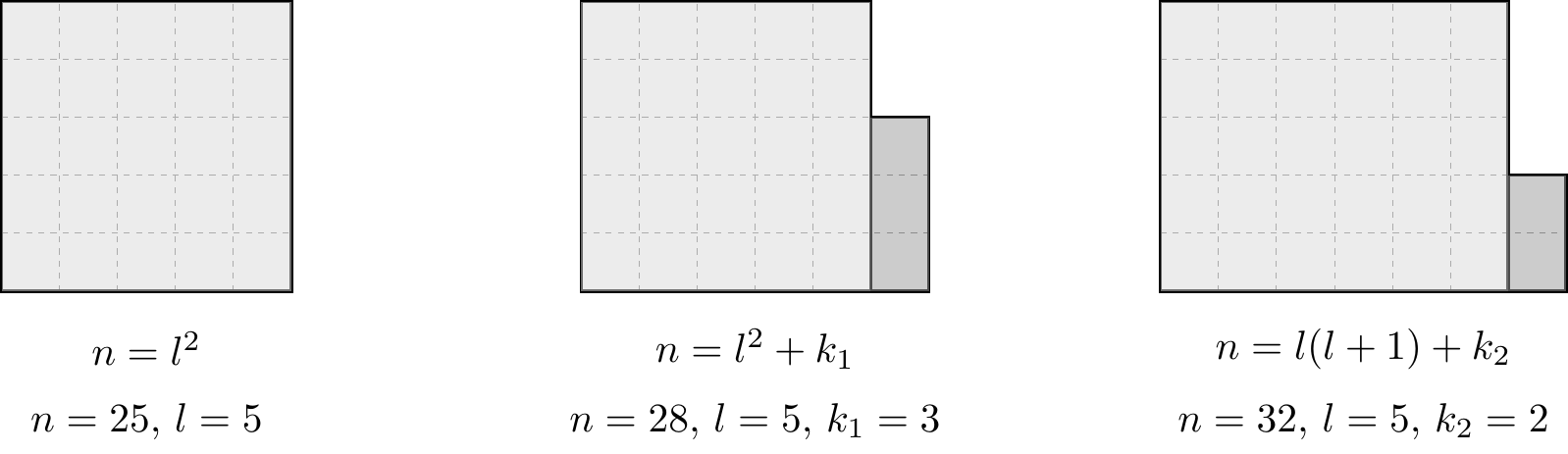}
    \caption{Examples of $\mathcal{P}\in\mathscr{M}_n$ for different $n$.}
 \label{fig:polyomini_Mn}
\end{center}
    \end{figure}

\begin{definition}
A polyomino is called \emph{concave} if there are at least two not connected  strips along the same column or row. A \emph{convex} polyomino is a polyomino which is not concave. Consider a convex polyomino $\mathcal{P}$ and let $\mathscr{R}_{a,b}$ be the smallest circumscribing rectangle of $\mathcal{P}$. If there exist two rectangles $\mathscr{R}_{a,u}$ and $\mathscr{R}_{w,b}$ with $u\leq b$ and $w \leq a$ such that they are contained in $\mathcal{P} \cap \mathscr{R}_{a,b}$, then we call $\mathcal{P}$ a \emph{cross-convex} polyomino.
\end{definition}
 See Figure \ref{fig:concave_clusters} for an example of concave, convex and cross-convex polyomino.

\begin{figure}[htb]
\begin{center}
    \includegraphics[scale=0.5]{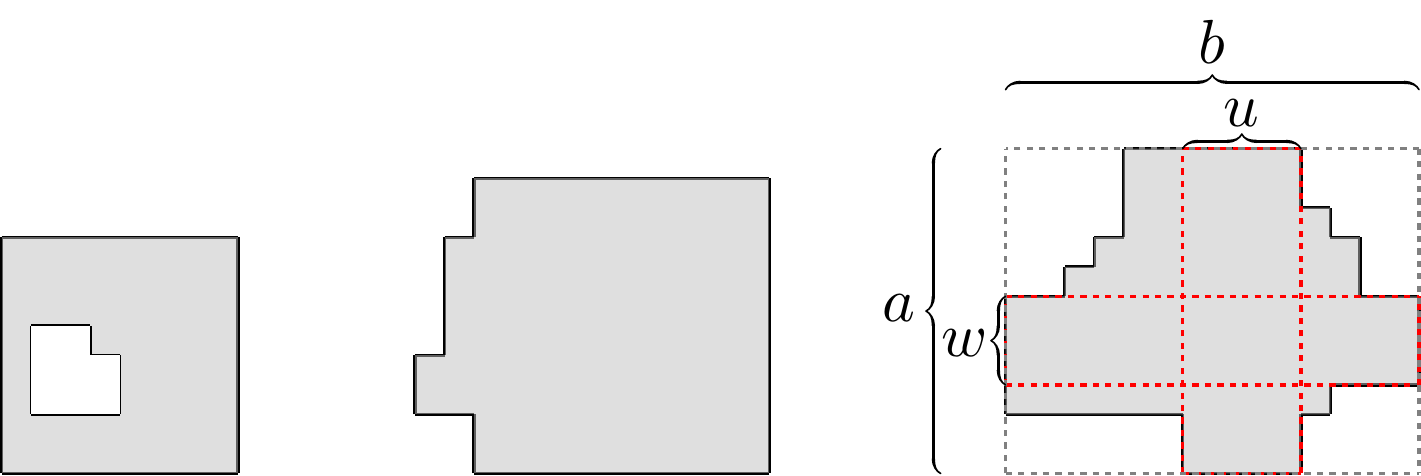}
    \caption{Starting from the left hand side, a concave polyomino, a convex polyomino and a cross-convex polyomino.}
 \label{fig:concave_clusters}
\end{center}
    \end{figure}

\noindent
We are now going to define the discrete version of a nonlocal perimeter for a given polyomino $\mathcal{P}$:

\begin{definition}
The nonlocal perimeter $Per_{\lambda}(\mathcal{P})$ of the polyomino $\mathcal{P}$ with parameter $\lambda$, where $\lambda>1$, is defined as: 
\begin{align}
\label{def:per}
    Per_{\lambda}(\mathcal{P}):=\sum_{x \in \mathbb{Z}^2 \cap \mathcal{P}, \, y \in \mathbb{Z}^2 \cap \mathcal{P}^c} \frac{1}{d^{\lambda}(x,y)},
\end{align}

where $d^{\lambda}(x,y)$ is the \emph{fractional bi-axial  function} defined by the relation:
 
\[
\frac{1}{d^{\lambda}(x,y)} := \frac{1}{(d_V(x,y))^\lambda}\textbf{1}_{\{ x_1=y_1, \, x_2 \neq y_2\}}  + \frac{1}{(d_H(x,y))^{\lambda}} \textbf{1}_{\{  x_2=y_2, \, x_1 \neq y_1\}},
\] 
where
$d_V(x,y)=|x_2-y_2|$, $d_H(x,y)= |x_1-y_1|$, and $x=(x_1,x_2)$, $y=(y_1,y_2)$.
\end{definition}
We note that in the case of the bi-axial function, the nonlocal perimeter can be written as the sum of the horizontal and vertical contributions, i.e.:
\begin{align}\label{eq:hor+ver}
    &Per_{\lambda}(\mathcal{P})=Per_{\lambda}^H(\mathcal{P})+Per_{\lambda}^V(\mathcal{P}),
\end{align}
where
\begin{align*}
   &Per_{\lambda}^H(\mathcal{P}):=\sum_{x \in \mathbb{Z}^2 \cap \mathcal{P}, \, y \in \mathbb{Z}^2 \cap \mathcal{P}^c}\frac{1}{(d_H(x,y))^{\lambda}} \textbf{1}_{\{  x_2=y_2, \, x_1 \neq y_1\}}, \\
   &Per_{\lambda}^V(\mathcal{P}):=\sum_{x \in \mathbb{Z}^2 \cap \mathcal{P}, \, y \in \mathbb{Z}^2 \cap \mathcal{P}^c} \frac{1}{(d_V(x,y))^{\lambda}}\textbf{1}_{\{ x_1=y_1, \, x_2 \neq y_2\}}.
\end{align*}

\medskip

We state now our main result in terms of an isoperimetric inequality for the discrete nonlocal perimeter.
\begin{theorem}\label{main_theorem}
Fix $n\in \mathbb{N}$. There exists a constant independent of $n$, $\lambda_c$, such that for all $\lambda>\lambda_c >1$ and all polyominos $\mathcal{P}\notin \mathscr{M}_n$ with area $a_{\mathcal{P}} = n$ we have that:
\[
Per_{\lambda}(\mathcal{P}) > Per_{\lambda}(\mathcal{R}),
\]
for any $\mathcal{R} \in \mathscr{M}_n$. 
\end{theorem}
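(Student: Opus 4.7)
The plan is to restrict the class of candidate minimizers in four successive steps: any minimizer must be (1) connected, (2) convex, (3) a rectangle with at most one $k$-protuberance, and (4) among such, a minimizer of classical perimeter with the protuberance lying on the shorter side. The additive decomposition $Per_\lambda(\mathcal{P})=Per_\lambda^H(\mathcal{P})+Per_\lambda^V(\mathcal{P})$ of \eqref{eq:hor+ver} reduces the problem, for a convex polyomino, to the analysis of the contribution of a single strip of length $\ell$,
\[
f_\lambda(\ell):=2\sum_{m=1}^{\infty}\frac{\min(m,\ell)}{m^{\lambda}},
\]
with forward difference $\Delta f_\lambda(\ell)=2\sum_{m>\ell}m^{-\lambda}$ strictly positive and strictly decreasing; thus $f_\lambda$ is strictly increasing, strictly concave, and subadditive with $f_\lambda(0)=0$.

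Steps (1) and (2) proceed by a common gap-filling rearrangement. For (1), I would translate one connected component of $\mathcal{P}$ rigidly toward another until they first meet; on each row or column involved, a gap of $\mathcal{P}^c$-sites flanked by two $\mathcal{P}$-strips shrinks or disappears, and this strictly decreases the contribution of that row/column to $Per_\lambda$. For (2), if a column contains two vertical strips separated by a hole, I would slide the top square of the upper strip into the top cell of the hole: every row length is unchanged, so $Per_\lambda^H$ is invariant, while on that column the gap shortens and $Per_\lambda^V$ strictly drops. The underlying identity is that a single row of length $\ell$ carrying one strip contributes exactly $f_\lambda(\ell)$, while splitting it into two strips separated by a positive gap yields a strictly larger contribution, monotone in the gap length.

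For step (3), in a convex polyomino the row-length and column-length sequences are both unimodal and $Per_\lambda(\mathcal{P})=\sum_i f_\lambda(r_i)+\sum_j f_\lambda(c_j)$. A swap argument, moving a boundary square so as to merge two distinct row lengths while tracking the induced change in column lengths, combined with induction on the number of distinct row-lengths, forces the minimizing row multiset to be $\{b,\ldots,b\}$ (rectangle) or $\{b,\ldots,b,k\}$ with $0<k<b$ (rectangle with a single $k$-protuberance). This step uses both subadditivity of $f_\lambda$ (to handle the row contribution) and strict concavity of $f_\lambda$ (to handle the column contribution), and has to be done carefully because the two effects push in opposite directions, so the gain must be extracted from their difference.

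Step (4) is the main obstacle. The anisotropy — protuberance on the shorter side — follows from a clean explicit computation: swapping a protuberance from the side of length $b$ to the side of length $a$ changes $Per_\lambda$ by $k\,[\Delta f_\lambda(b)-\Delta f_\lambda(a)]<0$ whenever $a<b$, by strict concavity of $f_\lambda$. The optimal aspect ratio then requires a quantitative comparison of $a f_\lambda(b)+b f_\lambda(a)$ over $(a,b)$ with $ab+k=n$, and this is where the threshold $\lambda_c$ genuinely enters: for $\lambda$ close to $1$ the tails $\sum_{m>\ell}m^{-\lambda}$ can in principle compensate for a worse classical perimeter, whereas for $\lambda>\lambda_c$ the nearest-neighbour term in $f_\lambda$ dominates uniformly in $n$ and produces the ordering matching $per(\cdot)$. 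Controlling these tails uniformly over the finite list of candidate $(a,b,k)$ of area $n$ is the quantitative heart of the proof, and accounts for the tie condition $per(\mathscr{R}_{a,b}^k)=per(\mathscr{Q}_l^{k_1})$ appearing in \eqref{minimizers1}.
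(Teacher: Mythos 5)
Your four-step strategy mirrors the paper's (reduce to connected, then convex, then rectangle-with-one-protuberance, then compare aspect ratios), and your identity $f_\lambda(\ell)=2\sum_{i=1}^{\ell}\zeta(\lambda,i)$ for a single strip is exactly Lemma~\ref{lem:rows}. But there is a concrete error in Step (2): sliding the top square of the upper vertical strip down into the hole is a \emph{vertical} move, so it transfers a square from one row to a different row; the source row may lose a square from the interior of a horizontal strip (splitting it) and the target row's strip structure also changes, so $Per_\lambda^H$ is \emph{not} invariant and can increase. The paper avoids precisely this trap by filling holes detected in a column with squares moved \emph{horizontally} within their own rows, and then needs two separate lemmas (Lemma~\ref{reduction_distance} for the row contribution and Lemma~\ref{lem:reduction_strips} for the column contribution, which requires the receiving column's strips to only lengthen) to certify that neither contribution increases. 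Your convexification step would have to be rebuilt along these lines.

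Steps (3) and (4) are asserted rather than proved, and they are where essentially all of the paper's work lies. For (3), you correctly note that subadditivity (rows) and concavity (columns) push in opposite directions, but you do not extract the gain; the paper's \texttt{ALGORITHM FOR CROSS-CONVEX $\mathcal{P}$} does this via explicit strip-removal/reattachment moves whose cost is bounded by quantities like $l_{S_h}\zeta(\lambda,l_{S_h}+2)$ and whose positivity is checked by hand for $\lambda>1.8$ --- nothing in your sketch substitutes for those estimates, and it is not clear your swap/induction scheme terminates at a rectangle with a \emph{single} protuberance rather than, say, a cross-convex shape. For (4), since the theorem fixes $n$, a soft argument (after Steps 1--3 only finitely many candidates remain, and $Per_\lambda\to per$ as $\lambda\to\infty$, so strict classical-perimeter inequalities persist for large $\lambda$) would in fact yield \emph{some} $\lambda_c(n)$; you gesture at this but do not state it, and it would not give the uniform threshold $\lambda_c\approx1.788$ that the paper obtains through the monotonicity/induction analysis of $F_1,F_2,\tilde F_1,\tilde F_2$ in Propositions~\ref{prop:rectangle_square}--\ref{prop:rectangle_strip_attached2}. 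As written, the proposal is a correct roadmap with one flawed step and two unexecuted ones.
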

We remark that the statement holds true taking $\lambda_c \approx 1.78788$. This value is obtained numerically from computing the smallest value of $\lambda$, such that the perimeter of a square $\mathscr{Q}_2$ is smaller than the one of a rectangle $\mathscr{R}_{1,4}$ in the proof of Proposition \ref{prop:rectangle_square}. 
For simplicity, we will work with $\lambda_c=1.8$.

In order to give an example of the calculation of nonlocal perimeter, we consider the polyomino $\mathcal{P}$ of Figure~\ref{fig:polyomino}, whose nonlocal perimeter is: 
 \[
\begin{split} Per_{\lambda}(\mathcal{P})&=
  2\sum_{r=1}^{\infty} \frac{1}{r^\lambda} + 10 \sum_{i=1}^3\sum_{r=i}^{\infty} \frac{1}{r^\lambda}+ 6\sum_{i=1}^5\sum_{r=i}^{\infty} \frac{1}{r^\lambda}   +4\sum_{i=1}^6\sum_{r=i}^{\infty} \frac{1}{r^\lambda} + 2\sum_{i=1}^7\sum_{r=i}^{\infty} \frac{1}{r^\lambda},\\
& =  24\zeta(\lambda) + 22 \sum_{i=2}^3\zeta(\lambda, i) + 12\sum_{i=4}^5  \zeta(\lambda, i) + 6  \zeta(\lambda, 6) + 2 \zeta(\lambda, 7),
   \end{split}
\]
where $\zeta(\lambda, i)$ denotes the Hurwitz-zeta function, defined as
\begin{equation}
\label{e:zeta}
\zeta(\lambda, i) =\sum_{r=0}^{\infty} \frac{1}{(r+i)^{\lambda}},
\end{equation}
and $\zeta(\lambda) = \zeta(\lambda, 1)$ is just the classical zeta-function. Note that for $\lambda \rightarrow \infty$ we have that all Hurwitz-zeta functions $\zeta(\lambda, i)$ with $i\geq 2$ will converge to 0 while $\zeta(\lambda) \rightarrow 1$. Hence, for $\lambda \rightarrow \infty$ the nonlocal perimeter coincides with the classical one.  

    Moreover, in Figure \ref{fig:minimizer_tab} we have reported the minimizers of $Per_{\lambda}$ belonging to $\mathscr{M}_n$, for areas $n\in\{1,\ldots,30\}$. In particular, we see that, consistently with Theorem~\ref{main_theorem}, there are two possible candidates for the minimal shape for the areas $n\in\{10,17,28\}$. If $\lambda$ is small enough, the rectangular shape will have smaller nonlocal perimeter (i.e., it will be the unique minimizer), while for $\lambda$ large enough the square shape with a protuberance will be in this case the unique minimizer. The values of $\lambda$, indicating the change of shape, were obtained numerically.

\begin{figure}[htb]
\begin{center}
    \includegraphics[scale=0.24
    ]{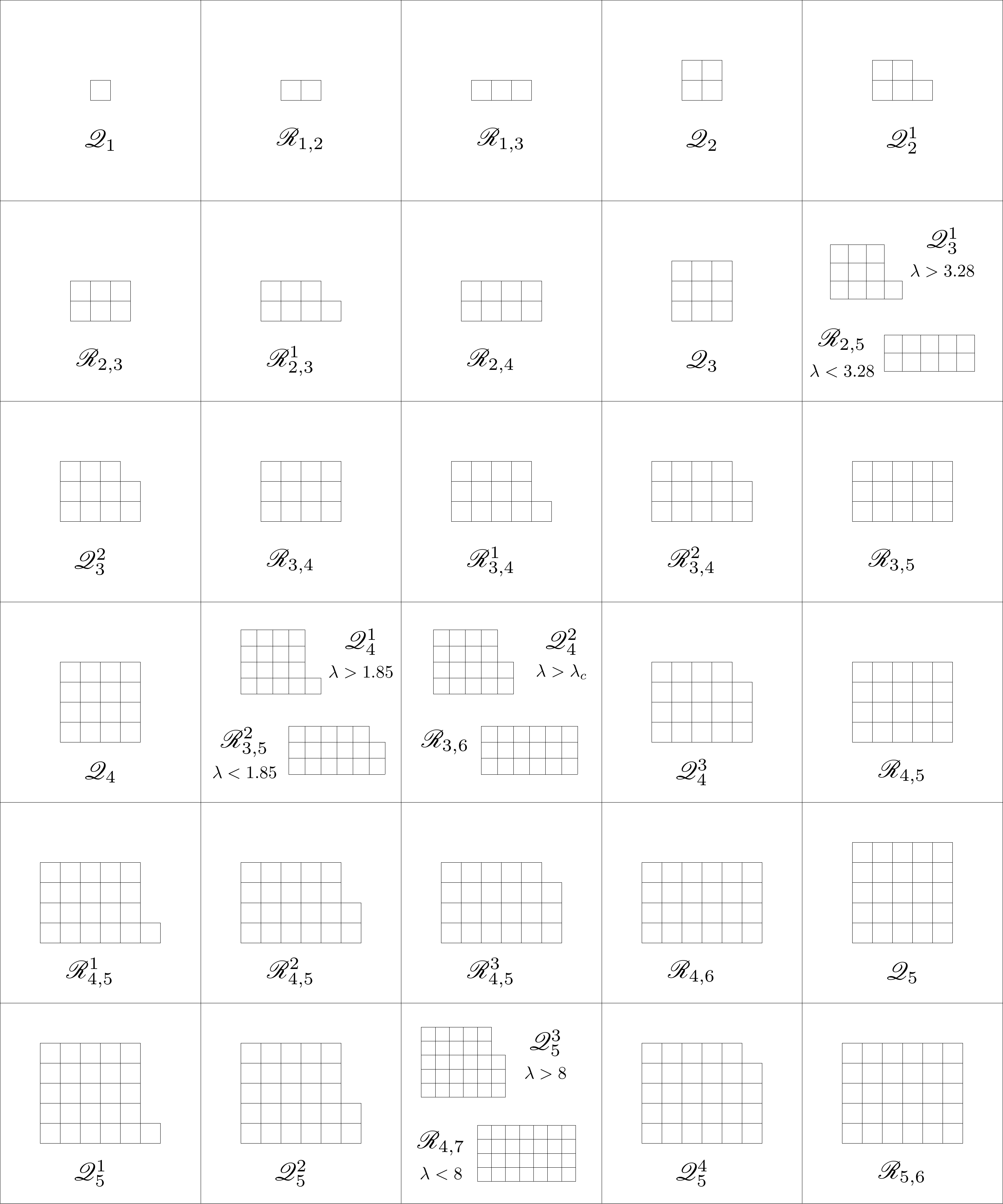} 
    \caption{Minimizers of $Per_{\lambda}$ in $\mathscr{M}_n$ for $n\in\{1,\dots, 30\}$.}
 \label{fig:minimizer_tab}
\end{center}
    \end{figure}

\section{Proof of Theorem~\ref{main_theorem}}\label{sec:proofs}

The structure of the proof is as follows. First, in Proposition \ref{prop:no_rectangle} we will prove that the minimizers of the nonlocal perimeter belong to $\mathscr{M}_n^{\text{ext}}$ which was defined in Equation \eqref{minimizers2}. The proof of Proposition \ref{prop:no_rectangle} is postponed to Section~\ref{poof:proop3.1}. This proposition is the most involved one and it represents the major novelty of the paper. We develop indeed an algorithmic proof (what we call the \texttt{\underline{MAIN ALGORITHM)}} allowing us to associate to any configuration in $\mathcal{M}_n \setminus \mathscr{M}^{\text{ext}}_n$ a configuration in  $\mathcal{M}_n$ with  smaller generalized perimeter and same area. In particular, by the iterative application of Lemmas \ref{reduction_distance}  and \ref{lem:reduction_strips} we exclude from the set of minimizers all polyominoes in $\mathcal{M}_n \setminus \mathscr{M}^{\text{ext}}_n$ that are not cross-convex, e.g. non-connected or concave polyominoes. The application of the \texttt{\underline{ALGORITHM FOR CROSS-CONVEX $\mathcal{P}$}} will further exclude cross-convex polyominoes which are in $\mathcal{M}_n \setminus \mathscr{M}^{\text{ext}}_n$ from the set of possible minimizers.

Furthermore, via Propositions \ref{prop:rectangle_square}, \ref{prop:rectangle_quasisquare}, \ref{prop:rectangle_strip_attached1} and \ref{prop:rectangle_strip_attached2} 
we prove that a polyomino with area $n$ belonging to $\mathscr{M}_n^{\text{ext}} \setminus \mathscr{M}_n$ has nonlocal perimeter larger than that of a polyomino in $\mathscr{M}_n$. 
More precisely, in Proposition \ref{prop:rectangle_square} (resp. in Proposition \ref{prop:rectangle_quasisquare}) we show that, given an area $n=l^2$ (resp. $n=l(l+1)$), for $\lambda>\lambda_c$ the nonlocal perimeter of any rectangle with this area is larger than the nonlocal perimeter of the square (resp. the quasi-square) with the same area. The proof will rely on a careful comparison of the respective nonlocal perimeter functions. The proof of Proposition \ref{prop:rectangle_square} will be given in Section~\ref{proof:prop3.2}, while the proof of Proposition \ref{prop:rectangle_quasisquare} will be postponed to the Appendix~\ref{proof:prop3.3}, since it has a very similar structure.
Finally, in Proposition \ref{prop:rectangle_strip_attached1} (resp. in Proposition \ref{prop:rectangle_strip_attached2}),  given an area $n=l^2+k_1$ (resp. $n=l(l+1)+k_1$), we prove that the nonlocal perimeter of the square (resp. quasi-square) with a protuberance and area $n$, i.e., $\mathscr{Q}_l^{k_1}$ (resp. $\mathscr{R}_{l,l+1}^{k_1}$), is strictly smaller than the respective nonlocal perimeter of a rectangle with a protuberance and the same area which has a larger classical perimeter than the one of $\mathscr{Q}_l^{k_1}$ (resp. $\mathscr{R}_{l,l+1}^{k_1}$).
The proof of Proposition \ref{prop:rectangle_strip_attached1} will be given in Section~\ref{prop:3.4}, while the proof of Proposition \ref{prop:rectangle_strip_attached2} will be given in Appendix~\ref{prop:3.5}.
\\
A schematic representation of the proof of Theorem~\ref{main_theorem} is given in Figure~\ref{fig:diagramm}.

\begin{figure}[htb]
\begin{center}
    \includegraphics[scale=0.21]{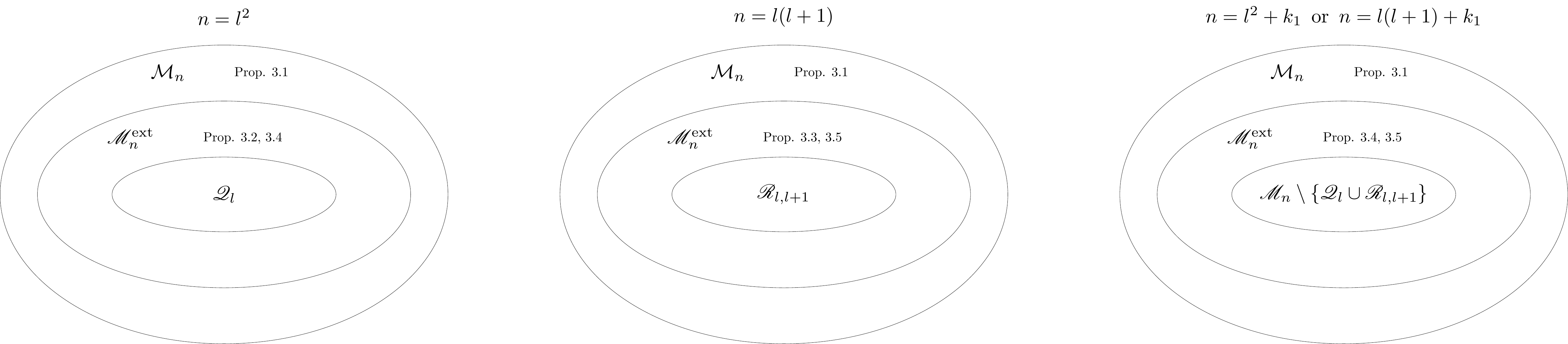}
    \caption{ A schematic representation of the proof of Theorem \ref{main_theorem}. We distinguish three cases according to the value of $n$. We reduce the set of possible minimizers in any of the cases by each shell. The corresponding proposition doing that is indicated in each shell. }
 \label{fig:diagramm}
\end{center}
    \end{figure}

In the following, we will formulate the precise statements of the propositions used in the proof of the main theorem. 

\begin{proposition}\label{prop:no_rectangle}
  There exists $\lambda_c>1$ such that for $\lambda > \lambda_c$, $n \in \mathbb{N}\setminus \{1\}$ and $\mathcal{P} \in \mathcal{M}_{n}$ be a polyomino such that $\mathcal{P} \not \in \mathscr{M}_n^{\text{ext}}$, we have that $$\mathcal{P} \not \in \text{argmin}_{\mathcal{P}' \in \mathcal{M}_{n}} \{Per_{\lambda}(\mathcal{P}') \}.$$
\end{proposition}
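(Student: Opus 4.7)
The plan is to produce, for every $\mathcal{P} \in \mathcal{M}_n \setminus \mathscr{M}_n^{\text{ext}}$, a strictly better competitor $\mathcal{P}' \in \mathcal{M}_n$ with $Per_\lambda(\mathcal{P}') < Per_\lambda(\mathcal{P})$, via a finite sequence of local ``slide'' moves on unit squares or strips. The key analytic fact I would isolate and use repeatedly is a one-dimensional discrete rearrangement principle: for a finite $A \subset \mathbb{Z}$ of fixed cardinality, the sum $\sum_{x \in A, y \in \mathbb{Z}\setminus A} |x-y|^{-\lambda}$ is minimised precisely when $A$ is an interval, and strictly decreases whenever we translate one ``block'' of $A$ by one unit toward another block across an empty gap. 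Combined with the decomposition $Per_\lambda = Per_\lambda^H + Per_\lambda^V$ of \eqref{eq:hor+ver}, this lets me analyse the effect of every move row by row and column by column. Termination is automatic since $\mathcal{M}_n$ is finite.

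\textbf{Step 1 (disconnectedness).} If $\mathcal{P}$ has two or more connected components, translate one component toward another along $e_1$ or $e_2$ until adjacency is achieved. Applying the rearrangement principle to every row (or column) that meets both components shows $Per_\lambda^H$ (or $Per_\lambda^V$) strictly decreases, while the perpendicular contribution is not increased (the columns affected by a horizontal shift have their occupied sets translated rigidly). This rules out disconnected minimisers.

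\textbf{Step 2 (concavity, the MAIN ALGORITHM).} Assume now $\mathcal{P}$ is connected but concave, so some row $r_i$ contains two disjoint strips. Slide one of them horizontally by one cell toward the other, partially filling the gap. The rearrangement principle applied to $r_i$ gives a strict drop in $Per_\lambda^H$. For the vertical part there are two cases: either the slide preserves the length of every affected vertical strip (the invariance clause emphasised in the introduction), so $Per_\lambda^V$ is unchanged; or the move reshuffles some columns, in which case I would reduce the operation to a single-square move and pair vertical contributions term-by-term to check that the change is $\le 0$. This is precisely the bookkeeping I expect Lemmas~3.1 and~3.2 to carry out. Iterating over all concave rows, then all concave columns, produces a convex polyomino whose nonlocal perimeter does not exceed that of $\mathcal{P}$, with at least one strict decrease along the way.

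\textbf{Step 3 (cross-convex reduction).} Among convex polyominoes not in $\mathscr{M}_n^{\text{ext}}$, the remaining candidates are cross-convex shapes whose circumscribing rectangle $\mathscr{R}_{a,b}$ strictly contains $\mathcal{P}$ and which carry protrusions on two perpendicular sides of $\mathscr{R}_{a,b}$. Picking a square in the smaller protrusion and sliding it along the boundary of $\mathscr{R}_{a,b}$ to merge it with the other protrusion gives, again by the one-dimensional rearrangement principle plus explicit comparison of the two contributions, a strict decrease in $Per_\lambda$. Iterating concentrates all protruding squares onto a single shortest side of $\mathscr{R}_{a,b}$, so the terminal shape lies in $\mathscr{M}_n^{\text{ext}}$, which is what the proposition asserts.

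The main obstacle I foresee is Step 2 when the horizontal slide strictly decreases $Per_\lambda^H$ but a priori could increase $Per_\lambda^V$ (e.g.\ when moved squares leave a tall column and land on a short one). This is where $\lambda > \lambda_c$ must be used: for $\lambda$ large enough the nearest-neighbour terms dominate both sums, so that a horizontal nearest-neighbour gain always strictly beats the largest possible vertical loss. A secondary subtlety is ensuring termination when a move leaves $Per_\lambda$ exactly invariant; for this I would track a finite combinatorial complexity (number of disjoint strips per row and column, plus the total number of protrusions of the circumscribing rectangle) and arrange each invariant move to strictly decrease that integer statistic, guaranteeing that the algorithm halts in $\mathscr{M}_n^{\text{ext}}$.
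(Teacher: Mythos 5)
Your one--dimensional rearrangement principle is correct and is exactly the paper's Lemma~\ref{reduction_distance}; the overall architecture (kill disconnected and concave shapes first, then handle cross-convex ones) also matches the paper's \texttt{MAIN ALGORITHM} followed by the \texttt{ALGORITHM FOR CROSS-CONVEX} $\mathcal{P}$. The genuine gap is in your Step~2, precisely at the obstacle you flag yourself. Your fallback -- ``for $\lambda>\lambda_c$ the nearest-neighbour terms dominate, so a horizontal nearest-neighbour gain always beats the largest possible vertical loss'' -- does not work for any fixed $\lambda_c$. Shrinking by one unit a horizontal gap of width $d$ between two short strips gains only terms of the form $\frac{1}{(k-1)^\lambda}-\frac{1}{k^\lambda}$ with $k\geq d$, i.e.\ an amount of order $d^{-\lambda-1}$, which is arbitrarily small for large $d$; meanwhile an unrestricted slide can remove a square from a tall column and deposit it in a short one, producing a vertical loss bounded below by a constant independent of $d$ and $n$. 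So the comparison fails uniformly in $n$, and increasing $\lambda$ does not repair it (it only makes the horizontal gain smaller). The paper never makes this comparison: its algorithm only moves squares from the $t$-th column into a column whose vertical strips, restricted to the rows $A_i$, are at least as long, and Lemma~\ref{lem:reduction_strips} then guarantees that the vertical contribution does not increase, for \emph{every} $\lambda>1$. Choosing the moves so that one of the two contributions is monotone by construction, rather than trying to beat a loss with a gain, is the missing idea.

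A second, smaller inaccuracy: you locate the role of $\lambda_c$ in Step~2, but in the paper the convexification stage is valid for all $\lambda>1$; the threshold $\lambda_c\approx 1.8$ is used only in the cross-convex step (the positivity of $f(x,\lambda)=\sum_{j=1}^x j^{1-\lambda}-x\,\zeta(\lambda,x+2)$ in Step~7 of the cross-convex algorithm) and in the later Propositions~\ref{prop:rectangle_square}--\ref{prop:rectangle_strip_attached2}. Your Step~3 correctly identifies that protruding mass must be consolidated, but the ``strict decrease'' there is not a consequence of the one-dimensional principle alone: it requires the explicit Hurwitz-zeta comparison of removing a whole boundary strip and reattaching it (the paper's Steps~6--7), after first normalising the cross-convex shape by perimeter-preserving moves. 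Finally, your termination monovariant is plausible but unproved; the paper's algorithm terminates because each return to Step~1 either strictly decreases the perimeter or strictly advances the column index $t$ within a fixed circumscribing rectangle.
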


\begin{proposition}\label{prop:rectangle_square}
 There exists $\lambda_c>1$ such that for $\lambda > \lambda_c$, $n \in \mathbb{N}\setminus \{1\}$ with $n=l^2=a \cdot b$, $a<b$ and $l, a, b \in \mathbb{N}$  we have that
$Per_{\lambda}(\mathscr{R}_{a,b}) > Per_{\lambda}(\mathscr{Q}_l)$.
\end{proposition}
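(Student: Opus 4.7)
The plan is to obtain closed-form expressions for both nonlocal perimeters from the bi-axial decomposition and then reduce the claim to an explicit Hurwitz-zeta inequality.

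First, since both $\mathscr{Q}_l$ and $\mathscr{R}_{a,b}$ are rectangles, the horizontal and vertical contributions in \eqref{eq:hor+ver} split over independent rows and columns. For a single row of length $b$, the pairing of each interior site with the infinitely many exterior sites of the same row contributes $2\sum_{i=1}^{b}\zeta(\lambda,i)$; summing over all rows and columns yields
\begin{align*}
Per_\lambda(\mathscr{R}_{a,b}) \;=\; 2a\,f(b) + 2b\,f(a), \qquad Per_\lambda(\mathscr{Q}_l) \;=\; 4l\,f(l), \qquad f(m):=\sum_{i=1}^m \zeta(\lambda,i).
\end{align*}
The proposition is therefore equivalent to $D(a,b) := a\,f(b) + b\,f(a) - 2l\,f(l) > 0$ whenever $ab = l^2$ and $a<l<b$.

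Second, I would split $f(b) = f(l) + \sum_{i=l+1}^{b}\zeta(\lambda,i)$ and $f(a) = f(l) - \sum_{i=a+1}^{l}\zeta(\lambda,i)$ and use the algebraic identity $a+b-2l = (l-a)^2/a$ (which follows from the area constraint $ab=l^2$) to obtain
\begin{align*}
D(a,b) \;=\; \frac{(l-a)^2}{a}\,f(l) \;+\; a\sum_{i=l+1}^{b}\zeta(\lambda,i) \;-\; b\sum_{i=a+1}^{l}\zeta(\lambda,i).
\end{align*}
The first two contributions are strictly positive. The third is negative and is the only delicate term: its coefficient $b$ exceeds $a$, and the $\zeta$-values for $i\leq l$ appearing there are larger than those in the tail sum. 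Consequently the inequality cannot be established term-by-term and actually fails for $\lambda$ close to $1$.

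Third, I would verify $D(a,b) > 0$ for $\lambda>\lambda_c$ by identifying the binding instance. As $\lambda\to\infty$ one has $\zeta(\lambda,i)\to 0$ for $i\geq 2$, hence $D(a,b)\to (l-a)^2/a > 0$, recovering the classical isoperimetric comparison of \cite{alonso1996three}. The tightest case is the smallest non-trivial one, $l=2$, $(a,b)=(1,4)$, for which the above formula collapses to
\begin{align*}
D(1,4) \;=\; \zeta(\lambda,1) - 3\,\zeta(\lambda,2) + \zeta(\lambda,3) + \zeta(\lambda,4),
\end{align*}
which is positive precisely when $\lambda>\lambda_c\approx 1.78788$, pinning down the critical exponent stated in the paper. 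The remaining divisor pairs of $l^2$ should be handled either by direct computation (for small $l$) or by a monotonicity-in-aspect-ratio argument asserting that more square-like rectangles have smaller nonlocal perimeter.

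The main obstacle is exactly this last step: turning the intuitive ``$\lambda$ large means the problem is approximately classical'' statement into a uniform quantitative bound across all divisor pairs $(a,b)$ of $l^2$ and all $l\geq 2$. Concretely, the negative head sum $b\sum_{i=a+1}^{l}\zeta(\lambda,i)$ must be controlled against the sum of the positive leading term $(l-a)^2 f(l)/a$ and the positive tail sum; the bookkeeping required to do this simultaneously for all configurations, and to show that the critical $\lambda_c$ is indeed determined by the $(1,4)$-vs-$\mathscr{Q}_2$ comparison, is where the argument will require real effort.
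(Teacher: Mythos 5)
Your setup is correct and coincides with the paper's: the same product formulas $Per_\lambda(\mathscr{R}_{a,b})=2a\sum_{i=1}^b\zeta(\lambda,i)+2b\sum_{i=1}^a\zeta(\lambda,i)$ and $Per_\lambda(\mathscr{Q}_l)=4l\sum_{i=1}^l\zeta(\lambda,i)$, and your identity $a+b-2l=(l-a)^2/a$ is a clean way to isolate the positive leading term. However, the proposal has a genuine gap: the entire content of the proposition is the uniform positivity of $D(a,b)$ over \emph{all} divisor pairs $(a,b)$ of $l^2$ and all $l$, and you do not prove it. You verify only the single instance $(a,b)=(1,4)$, assert without justification that it is "the binding instance", and defer the rest to "direct computation for small $l$" and an unproven "monotonicity-in-aspect-ratio" claim. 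You yourself flag this as "where the argument will require real effort" — but that effort is the proof. In particular, nothing in your write-up controls the negative head sum $b\sum_{i=a+1}^{l}\zeta(\lambda,i)$ against the two positive terms for, say, $a$ of order $l$ (e.g.\ $a=l-3$, $b=l^2/(l-3)$), which is exactly the regime where the three terms are of comparable size and the comparison is most delicate.

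For contrast, the paper closes this gap with a different regrouping: after the base cases $l=2,3$, it writes $\Delta=F_1(a,l)+F_2(a,l,b)$ with $F_1$ collecting all summands with index $k\le l-1$ and $F_2=\sum_{k=a+1}^{l-1}\frac{l^2-ak}{k^\lambda}-\sum_{k=l}^{b-1}\frac{l^2-ak}{k^\lambda}$, then proves $F_1>0$ by showing it is increasing in $l$ (using the arithmetic constraint $l\ge a+\sqrt{a}$, which follows from $ab=l^2$ and $a+b>2l$) and checking the base case $l=a+2$, and proves $F_2\ge 0$ by an integral comparison showing it is decreasing in $a$ with the extremal case $a=l-3$ handled in the limit $l\to\infty$. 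Some argument of this double-monotonicity type (or an equivalent uniform estimate) is what your plan is missing. A secondary point: your claim that $D(1,4)=\zeta(\lambda,1)-3\zeta(\lambda,2)+\zeta(\lambda,3)+\zeta(\lambda,4)$ "is positive precisely when $\lambda>\lambda_c\approx 1.78788$" is numerically off for the formula you wrote — that expression equals $1-2^{1-\lambda}-3^{-\lambda}$ and changes sign near $\lambda\approx 1.37$; this does not affect the existence statement, but it undercuts your identification of the binding case and of $\lambda_c$.
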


\begin{proposition}\label{prop:rectangle_quasisquare}
 There exists $\lambda_c>1$ such that for $\lambda > \lambda_c$, $n \in \mathbb{N}\setminus \{1\}$ with $n=l(l+1)=a \cdot b$, $a<b$ and $l, a, b \in \mathbb{N}$  we have that
$Per_{\lambda}(\mathscr{R}_{a,b}) > Per_{\lambda}(\mathscr{R}_{l,l+1})$.
\end{proposition}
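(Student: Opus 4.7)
My plan is to adapt to the quasi-square setting the strategy outlined for Proposition~\ref{prop:rectangle_square}. I would first evaluate $Per_\lambda$ explicitly on any rectangle: by the row/column decomposition \eqref{eq:hor+ver}, each of the $b$ rows of $\mathscr{R}_{a,b}$ contributes a horizontal term $2\sum_{j=1}^{a} \zeta(\lambda,j)$, and each of the $a$ columns contributes a vertical term $2\sum_{j=1}^{b} \zeta(\lambda,j)$, so that
\begin{equation*}
    Per_\lambda(\mathscr{R}_{a,b}) \;=\; 2\bigl[\, b\, f(a) + a\, f(b)\,\bigr], \qquad f(m) := \sum_{j=1}^{m} \zeta(\lambda, j).
\end{equation*}
Setting $\Phi(a,b) := b\, f(a) + a\, f(b)$, the proposition reduces to the inequality $\Phi(a,b) > \Phi(l,l+1)$ for every divisor factorization $ab = n = l(l+1)$ with $a < l$ (so $b = n/a > l+1$). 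Note that $l \mid n$ since $\gcd(l,l+1)=1$, so $(l,l+1)$ is itself a divisor pair, and it is the one with $a \leq b$ closest to the continuous optimum $a = \sqrt{n}$.

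As a convenient tool, I would swap the order of summation in $f$ to obtain
\begin{equation*}
    f(a) \;=\; \sum_{m=1}^{a} m^{\,1-\lambda} \;+\; a\, \zeta(\lambda, a+1),
\end{equation*}
which splits $\Phi$ into a \emph{classical} (short-range) contribution and a purely \emph{nonlocal} (tail) contribution:
\begin{equation*}
    \Phi(a,b) \;=\; b\sum_{m=1}^{a} m^{\,1-\lambda} \;+\; a\sum_{m=1}^{b} m^{\,1-\lambda} \;+\; n\bigl[\, \zeta(\lambda, a+1) + \zeta(\lambda, b+1)\,\bigr].
\end{equation*}
The strategy is then to prove that, as $a$ runs through the divisors of $n$ from $1$ up to $l$, the value $\Phi(a, n/a)$ is strictly decreasing; since $(l,l+1)$ corresponds to the largest such $a$, this identifies it as the minimizer. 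One verifies this by comparing consecutive divisors $d < d' \leq l$: the tail $n[\zeta(\lambda, d+1) + \zeta(\lambda, n/d+1)]$ shrinks rapidly as the rectangle becomes more compact, while the change in the classical-type terms is controlled using the asymptotic $\zeta(\lambda, j) \sim j^{\,1-\lambda}/(\lambda-1)$ together with the standing hypothesis $\lambda > \lambda_c = 1.8$.

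The main obstacle, inherited from Proposition~\ref{prop:rectangle_square}, is the sharpness of the inequality for the divisor factorization \emph{closest in shape} to the quasi-square: namely, the largest divisor $d$ of $n$ strictly below $l$, for which $Per_\lambda(\mathscr{R}_{d,n/d})$ is numerically very close to $Per_\lambda(\mathscr{R}_{l,l+1})$. In this borderline case the leading parts of the classical and tail contributions nearly cancel, so one must track the lowest-order Hurwitz-zeta terms $\zeta(\lambda, 1), \zeta(\lambda, 2), \ldots$ explicitly to check the sign of the difference. This borderline comparison is precisely what pins down the numerical threshold $\lambda_c \approx 1.78788$. Once it is handled, all remaining factorizations follow by iterating the one-step monotonicity, as for smaller divisors the rectangles become more elongated and both components of $\Phi$ become easier to control.
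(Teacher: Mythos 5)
Your setup is sound: the explicit formula $Per_\lambda(\mathscr{R}_{a,b}) = 2[\,b f(a) + a f(b)\,]$ agrees with Equation \eqref{eq:quasi_square_rectangle_GAP}, and your resummation $f(a) = \sum_{m=1}^a m^{1-\lambda} + a\,\zeta(\lambda,a+1)$ is a correct (and equivalent) form of the identity \eqref{eq:forA=1} that the paper also uses. The reduction to showing $\Phi(a,n/a) > \Phi(l,l+1)$ for divisors $a<l$ is the right statement to prove. The problem is that from this point on you have a plan, not a proof. The two claims that carry all the analytic weight --- that the tail $n[\zeta(\lambda,a+1)+\zeta(\lambda,b+1)]$ decreases as the rectangle becomes more compact, and that the change in the classical-type terms is ``controlled'' --- are asserted, not established. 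Neither is obvious: when $a$ increases and $b=n/a$ decreases, $\zeta(\lambda,b+1)$ \emph{increases}, so the tail is a competition between two terms of opposite sign whose balance depends on $\lambda$ being bounded away from $1$; similarly $b\sum_{m\le a}m^{1-\lambda} + a\sum_{m\le b}m^{1-\lambda} = n(a^{1-\lambda}+b^{1-\lambda}) + \dots$ requires an actual monotonicity computation. Worse, you explicitly identify the hardest case --- the largest divisor $d<l$, where the leading terms nearly cancel --- and then defer it (``one must track the lowest-order Hurwitz-zeta terms explicitly to check the sign''). That borderline comparison \emph{is} the proposition; the paper's proof in Appendix~\ref{proof:prop3.3} spends essentially all of its length on exactly this, via the decomposition $\Delta = f_1(a,l)+f_2(a,l,b)$ of \eqref{def_f1ll+1}--\eqref{deff2ll+1}, an induction in $l$ on the increment $rest(a,l)$ to get $f_1>0$ down to the extremal case $l=a+1$, and an integral-comparison argument for $f_2\ge 0$. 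Without an executed version of these estimates your argument does not close.

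Two smaller points. First, your organizational scheme (monotonicity of $\Phi$ along consecutive divisor pairs) differs from the paper's (direct comparison of each rectangle with the quasi-square, with monotonicity of $f_1$ in $l$ for fixed $a$ and of $f_2$ in $a$); your scheme is plausible but would still require case-by-case estimates of comparable difficulty, and divisors of $n$ need not be ``close'' to each other, so a one-step comparison between consecutive divisors does not obviously localize the difficulty. Second, the threshold $\lambda_c\approx 1.78788$ is not pinned down by the quasi-square case: the paper obtains it from the comparison of $\mathscr{Q}_2$ with $\mathscr{R}_{1,4}$ in Proposition~\ref{prop:rectangle_square}, while the small cases $l=2,3$ of the present proposition are checked by hand and hold for considerably smaller $\lambda$.
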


\begin{proposition}\label{prop:rectangle_strip_attached1}
Let $a, b, l, k_1, k_2, n \in \mathbb{N}$ be natural numbers and 
let $k_1 \in \{0, \ldots, l-1\}$ and $k_2\in \{0, \ldots, b-1\}$ be two integer numbers such that $k_1+k_2 \neq 0$. Furthermore, we assume that $n=ab+k_2=l^2+k_1$ and $n\geq 2$.
Then we have $\text{argmin}_{\mathcal{P} \in \mathcal{M}_{n}} \{Per_{\lambda}(\mathcal{P}) \} \subset \mathscr{M}_n$, where $\mathscr{M}_n$ was defined in \eqref{minimizers1}.
\end{proposition}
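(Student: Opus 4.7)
My plan is a direct algebraic comparison. By Proposition~\ref{prop:no_rectangle} the minimizer is already contained in $\mathscr{M}_n^{\text{ext}}$, so it suffices to show that every $\mathscr{R}_{a,b}^{k_2}\in\mathscr{M}_n^{\text{ext}}\setminus\mathscr{M}_n$ --- equivalently, every rectangle-with-protuberance of area $n$ whose classical perimeter is strictly larger than $per(\mathscr{Q}_l^{k_1})$ --- satisfies $Per_\lambda(\mathscr{R}_{a,b}^{k_2}) > Per_\lambda(\mathscr{Q}_l^{k_1})$.

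The first step is to derive a closed form for $Per_\lambda$ on rectangles-with-protuberance. Setting $\alpha:=\max(a,b)$ and $\beta:=\min(a,b)$, the horizontal/vertical decomposition \eqref{eq:hor+ver} together with the observation that a single strip of length $m$ contributes exactly $2 f(m)$ to the associated directional perimeter, where $f(m):=\sum_{i=1}^{m}\zeta(\lambda,i)$, yield
\[
Per_\lambda(\mathscr{R}_{\alpha,\beta}^{k}) \;=\; 2\bigl[\alpha f(\beta) + \beta f(\alpha) + f(k) + k\,\zeta(\lambda,\alpha+1)\bigr]
\]
by counting the $\alpha$ rectangle rows of length $\beta$, the extra row of length $k$ from the protuberance, the $k$ columns whose length is raised to $\alpha+1$, and the remaining $\beta-k$ columns of length $\alpha$. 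Specializing to $\alpha=\beta=l$, $k=k_1$ gives $Per_\lambda(\mathscr{Q}_l^{k_1})$; using the area constraint $\alpha\beta+k_2=l^2+k_1$ the quantity to be shown positive is
\[
\tfrac{1}{2}\bigl[Per_\lambda(\mathscr{R}_{\alpha,\beta}^{k_2})-Per_\lambda(\mathscr{Q}_l^{k_1})\bigr] \;=\; \bigl[\alpha f(\beta)+\beta f(\alpha)-2l f(l)\bigr]+\bigl[f(k_2)-f(k_1)\bigr]+\bigl[k_2\zeta(\lambda,\alpha+1)-k_1\zeta(\lambda,l+1)\bigr].
\]

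My strategy is to split the argument according to the sign of $\alpha\beta-l^2 = k_1-k_2$. In the first regime $\alpha\beta=l^2$ (which forces $k_2=k_1$ and, by the classical-perimeter hypothesis, $\alpha+\beta>2l$), the middle bracket vanishes, the first bracket is strictly positive by Proposition~\ref{prop:rectangle_square}, and the negative third bracket $k_1[\zeta(\lambda,\alpha+1)-\zeta(\lambda,l+1)]$ has to be absorbed: I would extract from the proof of Proposition~\ref{prop:rectangle_square} an explicit quantitative lower bound on the rectangle-part gain (essentially of order $(\alpha-l)\,\zeta(\lambda,\alpha)$) and use the slack $k_1\leq l-1<\alpha$ to dominate the residue. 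In the second regime $\alpha\beta\neq l^2$, the area constraint forces $\alpha\beta\in\{l^2-(\beta-1),\dots,l^2+l-1\}\setminus\{l^2\}$, a narrow window which, combined with $\alpha+\beta\geq 2l+1$, tightly constrains $(\alpha,\beta)$. I would then induct on the elongation $\alpha-l\geq 1$; the inductive step is an area-preserving move that decreases the classical perimeter by $2$ --- either the ``squaring-up'' $(\alpha,\beta,k_2)\mapsto(\alpha-1,\beta+1,k_2-\alpha+\beta+1)$ when $k_2=\alpha-\beta-1$ (so the new $k'$ equals $0$), or a composite slice-move of the type developed in the proof of Proposition~\ref{prop:no_rectangle} when $k_2$ is too small for direct squaring-up. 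A direct computation from the closed form above, using only the monotonicity $i\mapsto\zeta(\lambda,i)$, should show that every such perimeter-reducing move strictly decreases $Per_\lambda$; iterating brings us either back to the first regime or to $\mathscr{Q}_l^{k_1}$ itself.

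The main obstacle I foresee is the first regime: the protuberance of $\mathscr{R}_{\alpha,\beta}^{k_2}$ sits on a longer column (of length $\alpha+1 > l+1$) than that of $\mathscr{Q}_l^{k_1}$, so it contributes \emph{less} to the vertical perimeter, and the entire positive sign of the difference must be supplied by the rectangle-part gain. Making this quantitative at the critical value $\lambda_c\approx 1.78788$ --- already tight for the comparison $\mathscr{Q}_2$ vs.\ $\mathscr{R}_{1,4}$ that defines it in the proof of Proposition~\ref{prop:rectangle_square} --- will require carefully tracking the underlying Hurwitz zeta inequalities and verifying that the slack $k_1\leq l-1$ is enough to absorb the residual vertical term.
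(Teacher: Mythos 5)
Your reduction via Proposition~\ref{prop:no_rectangle} and your closed form for $Per_\lambda(\mathscr{R}_{\alpha,\beta}^{k})$ (which is the paper's \eqref{eq:rect_prot}) are correct, and the three-bracket decomposition of the difference is a legitimate starting point. The fatal gap is the inductive step of your second regime: the claim that every area-preserving move that lowers the classical perimeter by $2$ strictly lowers $Per_\lambda$ ``by a direct computation\dots using only the monotonicity $i\mapsto\zeta(\lambda,i)$'' cannot be correct. The single such move $\mathscr{R}_{1,4}\to\mathscr{Q}_2$ (classical perimeter $10\to 8$) decreases $Per_\lambda$ only for $\lambda>\lambda_c\approx 1.78788$ and \emph{increases} it below that threshold, whereas monotonicity of $\zeta(\lambda,\cdot)$ holds for every $\lambda>1$; so no argument resting only on monotonicity can establish the step, and the $\lambda$-dependence must enter quantitatively. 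This is precisely where the bulk of the paper's work lies: the functions $F_1,F_2$ of Proposition~\ref{prop:rectangle_square} with their monotonicity in $l$ and $a$, their perturbations $\tilde F_1,\tilde F_2$ once protuberances are present (requiring the arithmetic fact $l\ge a+\sqrt{a}$ extracted from the joint area/perimeter constraints), and, for the sub-case $k_1-k_2>a$, a separate induction on the classical-perimeter excess $C$ whose base case needs the dedicated Lemma~\ref{lem:increasing_alpha_2}. Your ``composite slice-moves'' also need a proof that a chain of rectangles-with-protuberance with perimeter decreasing by exactly $2$ at each step actually exists and stays inside that class; this is an \cite{alonso1996three}-type combinatorial fact you have not supplied.

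Two further holes: (a) in your first regime ($k_1=k_2$) you correctly identify the negative residue $k_1[\zeta(\lambda,\alpha+1)-\zeta(\lambda,l+1)]$ as the obstacle, but the lower bound you propose to extract from Proposition~\ref{prop:rectangle_square} (``of order $(\alpha-l)\,\zeta(\lambda,\alpha)$'') is of the \emph{same} order as that residue (which is $\le (l-1)(\alpha-l)(l+2)^{-\lambda}$), so the comparison hinges on constants you never compute --- this part of the argument is a plan, not a proof; (b) your closed form carries $\zeta(\lambda,\alpha+1)$ and hence only covers protuberances attached to the shorter side, whereas the statement allows $k_2\le b-1$; for $k_2>a$ there is no shorter-side configuration to compare against, and the paper needs the separate cases (ii.2.b)--(ii.2.c) to dispose of these.
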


In particular, we note that if $k_1=0$, then by \cite{alonso1996three} it follows that there are no rectangles, or rectangles with protuberances, and perimeter equal to $per(\mathscr{Q}_l)=4l$. Thus, in the case $k_1=0$ (resp. $n=l^2$), by Propositions \ref{prop:rectangle_square}, \ref{prop:rectangle_strip_attached1}, \ref{prop:no_rectangle} we have $\text{argmin}_{\mathcal{P} \in \mathcal{M}_{n}} \{Per_{\lambda}(\mathcal{P}) \}=\{ \mathscr{Q}_l \}$. 

\begin{proposition}\label{prop:rectangle_strip_attached2}
Let $a, b, l, k_1, k_2, n \in \mathbb{N}$ be natural numbers and 
let $k_1 \in \{0, \ldots, l\}$ and $k_2\in \{0, \ldots, b-1\}$ be two integer numbers such that $k_1+k_2 \neq 0$. Furthermore, we assume that $n=ab+k_2=l(l+1)+k_1$ and $n\geq 2$.
We have then $\text{argmin}_{\mathcal{P} \in \mathcal{M}_{n}} \{Per_{\lambda}(\mathcal{P}) \} \subset \mathscr{M}_n$, where $\mathscr{M}_n$ was defined in \eqref{minimizers1}.
\end{proposition}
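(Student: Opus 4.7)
\textbf{Plan for the proof of Proposition~\ref{prop:rectangle_strip_attached2}.}

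The strategy mirrors the proof of Proposition~\ref{prop:rectangle_strip_attached1}, with the square-with-protuberance $\mathscr{Q}_l^{k_1}$ replaced throughout by the quasi-square-with-protuberance $\mathscr{R}_{l,l+1}^{k_1}$. The first step is to invoke Proposition~\ref{prop:no_rectangle}, which restricts the set of candidate minimizers in $\mathcal{M}_n$ to $\mathscr{M}_n^{\text{ext}}$. By \eqref{minimizers1}--\eqref{minimizers2}, every element of $\mathscr{M}_n^{\text{ext}} \setminus \mathscr{M}_n$ is a rectangle-with-protuberance $\mathscr{R}_{a,b}^k$ of area $n$ whose classical perimeter strictly exceeds that of $\mathscr{R}_{l,l+1}^{k_1}$ (by at least $2$, since polyomino perimeters are even). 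It therefore suffices to prove
\begin{equation*}
Per_\lambda(\mathscr{R}_{a,b}^k) \, > \, Per_\lambda(\mathscr{R}_{l,l+1}^{k_1})
\end{equation*}
for every such competitor.

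The second step is to obtain an explicit formula. Using \eqref{eq:hor+ver} and summing the horizontal row and vertical column contributions, I would write
\begin{equation*}
Per_\lambda(\mathscr{R}_{a,b}^k) \, = \, 2b \sum_{i=1}^{a} \zeta(\lambda, i) \, + \, 2a \sum_{i=1}^{b} \zeta(\lambda, i) \, + \, 2 \sum_{i=1}^{k} \zeta(\lambda, i) \, + \, 2k\, \zeta(\lambda, b+1),
\end{equation*}
and the analogous expression for $Per_\lambda(\mathscr{R}_{l,l+1}^{k_1})$ by setting $(a,b,k) = (l,l+1,k_1)$. The difference $D := Per_\lambda(\mathscr{R}_{a,b}^k) - Per_\lambda(\mathscr{R}_{l,l+1}^{k_1})$ then decomposes cleanly into a rectangle-body part and a protuberance-correction part.

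The third step is a case analysis on the protuberances. When both $k$ and $k_1$ vanish, $D>0$ is exactly the statement of Proposition~\ref{prop:rectangle_quasisquare}. When exactly one of them vanishes, the classical-perimeter constraint combined with the area constraint $ab+k = l(l+1)+k_1$ forces $a+b \geq 2l+3$ (if $k=0$) or $a+b \geq 2l+1$ (if $k_1=0$), and a direct comparison via the explicit formula closes the case. The delicate sub-case is $k, k_1 \geq 1$, where only $a+b \geq 2l+2$ is available. Here I would invoke a monotonicity argument in $b$: for $\lambda>\lambda_c$ the growth of $2b \sum_{i=1}^{a} \zeta(\lambda, i)$ as the rectangle is elongated dominates the decrease of $2a \sum_{i=1}^{b} \zeta(\lambda, i)$, so that an induction on $b-(l+1)$ reduces the claim to a small $l$-independent set of base configurations, each verified by direct numerical evaluation and yielding the same threshold $\lambda_c \approx 1.78788$ that already appears in Proposition~\ref{prop:rectangle_square}.

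The main technical obstacle, as in the proof of Proposition~\ref{prop:rectangle_strip_attached1}, lies in controlling the protuberance correction $2 \sum_{i=1}^{k} \zeta(\lambda, i) + 2k\, \zeta(\lambda, b+1)$ uniformly in $a, b, k$: because the second summand depends on the host-rectangle height $b$, the correction cannot be isolated from the body, and \emph{short-and-fat} competitors ($b$ close to $l+1$) behave quite differently from \emph{tall-and-thin} ones. The binding regime for the constant $\lambda_c$ is the former, where $D$ is genuinely small and the inequality is tight; in the latter, the monotonicity in $b$ provides ample slack, so the finite numerical verification at the base of the induction is what actually determines the threshold.
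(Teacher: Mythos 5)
Your opening two steps coincide with the paper's: the reduction of the candidate set to $\mathscr{M}_n^{\text{ext}}$ via Proposition~\ref{prop:no_rectangle}, the explicit formula for $Per_\lambda(\mathscr{R}_{a,b}^k)$ (this is exactly \eqref{eq:rect_prot}), and the observation that the remaining competitors have classical perimeter strictly larger than that of $\mathscr{R}_{l,l+1}^{k_1}$. The case split on which of $k,k_1$ vanish, with the derived constraints $a+b\geq 2l+3$, $a+b\geq 2l+1$, $a+b\geq 2l+2$, is also correct. The divergence, and the gap, lies in your proposed analytic core.

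Your plan to run an induction on $b-(l+1)$ and to ``reduce the claim to a small $l$-independent set of base configurations, each verified by direct numerical evaluation'' does not work as stated, for two reasons. First, the base of such an induction is not a finite set: at fixed area $n=ab+k=l(l+1)+k_1$ and fixed small $b-(l+1)$, the width $a$ is forced to be close to $l$, so the base configurations form an infinite family parametrized by $l$, and no finite numerical check can cover it. Second, the inductive step is not well-posed: decreasing $b$ by one while preserving the area forces compensating changes in $a$ and $k$, so you are not comparing two terms of a single monotone sequence but jumping between competitors whose protuberance corrections $2\sum_{i=1}^k\zeta(\lambda,i)+2k\zeta(\lambda,b+1)$ change discontinuously — precisely the difficulty you flag at the end but do not resolve. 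The paper avoids both problems by decomposing the perimeter difference into a body part $f_1(a,l)+f_2(a,l,b)$ (resp.\ $\tilde h_1,\tilde h_2$ when protuberances are present, defined in \eqref{def:h_tilde1}--\eqref{def:h_tilde2}) plus an explicitly controlled correction. Positivity of $f_1,\tilde h_1$ is obtained by showing they are \emph{increasing in $l$} and then bounding the boundary case $l=a+1$ analytically (integral comparison valid for all $a$, not numerically); positivity of $f_2,\tilde h_2$ follows from monotonicity in $a$ resp.\ in $k_1$. You would need to replace your induction on $b$ by an argument of this type — or at least by a monotonicity statement in a single parameter whose extremal case can be settled uniformly in $l$ — before the ``exactly one vanishes'' and ``both positive'' cases can be considered closed. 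The case $k=k_1=0$ is indeed Proposition~\ref{prop:rectangle_quasisquare}, as you say, but note that this case is excluded by the hypothesis $k_1+k_2\neq 0$ unless one of the protuberances is present on the other shape.
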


Again, it follows by \cite{alonso1996three} that, in case $k_1=0$  (resp. $n=l(l+1)$), there are no rectangles (different from the quasi-square), or rectangles with protuberances, and classical perimeter equal to $per(\mathscr{R}_{l,l+1})=4l+2$. Thus, for $k_1=0$, by Propositions \ref{prop:rectangle_quasisquare}, \ref{prop:rectangle_strip_attached2}, \ref{prop:no_rectangle} we have that $\text{argmin}_{\mathcal{P} \in \mathcal{M}_{n}} \{Per_{\lambda}(\mathcal{P}) \}=\{ \mathscr{R}_{l,l+1} \}$.

\subsection{Auxiliary lemmas about $Per_{\lambda}(\mathcal{P})$}
\label{prop:auxiliary}

In order to prove the propositions, we will need the following lemmas regarding properties of polyominoes and of their nonlocal perimeters. For instance, Lemma~\ref{lem:rows} will give a useful way to compute the horizontal (resp. vertical) contribution of the nonlocal perimeter for general polyominos $\mathcal{P}$ by summing the contribution along each row (resp. column).
The proofs of these lemmas are postponed to Section 
\ref{proof_lemmas}.

\begin{lemma}\label{lem:rows}
Let $N\in \mathbb{N}$. Consider a general polyomino $\mathcal{P}$ intersecting $m\in \mathbb{N}$ rows, $r_1,\ldots, r_m$. Given a fixed intersecting row $r$, let $S_{r}^{(1)},\ldots,S_{r}^{(N)}$ be the $N$ horizontal strips in $\mathcal{P} \cap r$ defined in \eqref{def:strips_row} following the lexicographic order. Let $l_1,\ldots,l_N$ be the lengths of $S_r^{(1)},\ldots,S_r^{(N)}$ respectively, and let $d^{(r)}_j$ be the distance between $S^{(j)}_r$ and $S^{(j+1)}_r$ for $j=1,\ldots,N-1$. Then we have for all $\lambda>1$, that the horizontal contribution of the nonlocal perimeter is equal to
\begin{align}\label{eq:contribution_strips}
    Per_{\lambda}^{H}(\mathcal{P} )= Per_{\lambda}^H \left ( \bigcup_{j=1}^{m}  \{ \mathcal{P} \cap r_j\} \right ) =\sum_{j=1}^{m} Per_{\lambda}^{H}(\mathcal{P} \cap r_j).
\end{align}
Moreover,
\begin{align*}
&   Per_{\lambda}^{H}(\mathcal{P} \cap r)\\
    &=\sum_{i=1}^{l_1}\zeta(\lambda, i)
    +\sum_{j=2}^N\sum_{i=1}^{l_j} \left ( \zeta(\lambda, i) -
    \sum_{k=d^{(r)}_{j-1}+i}^{i+l_{j-1}+d^{(r)}_{j-1}-1}\frac{1}{k^\lambda} \right )
    +\sum_{i=1}^{l_N}\zeta(\lambda, i)
    +\sum_{j=1}^{N-1}\sum_{i=1}^{l_j} \left ( \zeta(\lambda, i) -
    \sum_{k=d^{(r)}_{j}+i}^{i+l_{j+1}+d^{(r)}_{j}-1}\frac{1}{k^\lambda} \right ). \notag \\
\end{align*}
In particular, if $N=1$, then
\begin{align}\label{eq:contribution_strips1}
    Per_{\lambda}^{H}(\mathcal{P} \cap r) =2 \sum_{i=1}^{l_1}\zeta(\lambda, i).
\end{align}
The analogous results hold by considering the vertical contribution of the nonlocal perimeter.
\end{lemma}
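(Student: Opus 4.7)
The plan is to reduce $Per_\lambda^H(\mathcal{P})$ first to a sum of one-dimensional problems indexed by rows, and then to compute each such one-dimensional contribution explicitly by splitting every lattice site's contribution into a left-looking and a right-looking piece. For the first step, I would exploit the indicator $\1_{\{x_2=y_2,\,x_1\neq y_1\}}$ sitting inside the horizontal kernel: the summand vanishes unless $x$ and $y$ share a row. Partitioning the double sum in \eqref{def:per} according to the common value of the second coordinate and reindexing by the row $r$ then gives
\[
Per_\lambda^H(\mathcal{P}) \;=\; \sum_{r}\ \sum_{x\in\mathbb{Z}^2\cap\mathcal{P}\cap r}\ \sum_{y\in\mathbb{Z}^2\cap\mathcal{P}^c\cap r} \frac{1}{|x_1-y_1|^\lambda},
\]
where only finitely many rows contribute since $\mathcal{P}$ is finite. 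Reading the inner double sum through the definition applied to $\mathcal{P}\cap r$ (viewed as a one-dimensional polyomino) identifies it with $Per_\lambda^H(\mathcal{P}\cap r)$, establishing \eqref{eq:contribution_strips}.

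For the per-row formula I would fix a row $r$ with strips $S^{(1)},\ldots,S^{(N)}$ labelled from left to right, lengths $l_1,\ldots,l_N$, and gaps $d_1^{(r)},\ldots,d_{N-1}^{(r)}$ interpreted as the number of empty sites between two consecutive strips. Take $x$ at the $i$th position of $S^{(j)}$ counted from the left, and split the sum over $y\in\mathcal{P}^c\cap r$ into the left half-line $\{y_1<x_1\}$ and the right half-line $\{y_1>x_1\}$. If the whole left half-line lay in $\mathcal{P}^c$, the contribution would be $\sum_{k=i}^{\infty} 1/k^\lambda=\zeta(\lambda,i)$ by translation; from this I subtract back the distances to the sites that actually belong to $\mathcal{P}$. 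The $s$th site of the adjacent strip $S^{(j-1)}$ (counted from the left) sits at distance $d_{j-1}^{(r)}+i+(l_{j-1}-s)$ from $x$, so as $s$ sweeps $\{1,\ldots,l_{j-1}\}$ the distance $k$ sweeps the range $[d_{j-1}^{(r)}+i,\ d_{j-1}^{(r)}+i+l_{j-1}-1]$, producing precisely the correction sum inside the second term of the statement. The leftmost strip $S^{(1)}$ needs no such correction, which accounts for the isolated $\sum_{i=1}^{l_1}\zeta(\lambda,i)$ term. The mirror computation on the right half-line -- now indexing $i$ from the right of $S^{(j)}$ and subtracting the distances to $S^{(j+1)}$ -- yields the remaining two sums of \eqref{eq:contribution_strips}.

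The specialization to $N=1$ is then immediate: both correction sums are empty and the left- and right-looking pieces each contribute $\sum_{i=1}^{l_1}\zeta(\lambda,i)$, collapsing to \eqref{eq:contribution_strips1}. The vertical statement $Per_\lambda^V(\mathcal{P})=\sum_c Per_\lambda^V(\mathcal{P}\cap c)$ follows by the identical argument with rows and columns swapped in the indicator. The step I expect to be the main obstacle is purely combinatorial bookkeeping: one must keep the two local indexings straight (counting $i$ from the left for the left-looking contributions and from the right for the right-looking contributions) and translate cleanly between "position within a strip" and the absolute distance $|x_1-y_1|$ appearing in the $1/k^\lambda$-summations, so that the four pieces of \eqref{eq:contribution_strips} line up with the correct index ranges. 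Once that dictionary is fixed, what remains is a direct summation of geometric-like $\zeta$-tails.
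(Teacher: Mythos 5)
Your proposal is correct and follows essentially the same route as the paper: the authors likewise use the indicator in the horizontal kernel to reduce to a row-by-row sum, and then compute each row's contribution square by square as a left-looking and a right-looking Hurwitz-zeta tail $\zeta(\lambda,i)$ minus the correction sums over the occupied sites of the neighbouring strip. Your explicit bookkeeping of the distance range $[d^{(r)}_{j-1}+i,\ d^{(r)}_{j-1}+i+l_{j-1}-1]$ is exactly the index translation the paper leaves implicit.
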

\begin{lemma}\label{reduction_distance}
Let $\mathcal{P}$ be a polyomino (or a union of polyominos) that contains two vertical (resp. horizontal) strips at distance $d\geq 2$, and let $\mathcal{P}'$ be the polyomino obtained from $\mathcal{P}$ by shifting one of the two strips toward the other, and by reducing the distance by at least one. Then the nonlocal perimeter contribution of $\mathcal{P}'$ along the column (resp. the row) containing the two strips is strictly smaller than the nonlocal perimeter contribution of $\mathcal{P}$ along the same column (resp. row). 
\end{lemma}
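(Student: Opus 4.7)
The plan is to invoke Lemma \ref{lem:rows} specialized to $N=2$ on the common column (resp.\ row) containing the two collinear strips, and then directly compare the two resulting closed forms at the original gap $d$ versus the reduced gap $d'\in\{1,\ldots,d-1\}$. I treat the vertical case; the horizontal case is identical after swapping the two axes. If the two vertical strips in the common column $c$ have lengths $l_1$ and $l_2$, then Lemma \ref{lem:rows} with $N=2$ specializes to
\begin{align*}
Per_\lambda^V(\mathcal{P}\cap c) = 2\sum_{i=1}^{l_1}\zeta(\lambda,i)+2\sum_{i=1}^{l_2}\zeta(\lambda,i)-\sum_{i=1}^{l_2}\sum_{k=d+i}^{l_1+d+i-1}\frac{1}{k^\lambda}-\sum_{i=1}^{l_1}\sum_{k=d+i}^{l_2+d+i-1}\frac{1}{k^\lambda},
\end{align*}
and the analogous identity holds for $\mathcal{P}'$ with $d$ replaced by $d'$.

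Since the shift preserves the strip lengths $l_1$ and $l_2$, the two Hurwitz-zeta blocks are identical for $\mathcal{P}$ and $\mathcal{P}'$ and cancel in the difference $Per_\lambda^V(\mathcal{P}'\cap c)-Per_\lambda^V(\mathcal{P}\cap c)$. The strict comparison of the two column contributions therefore reduces entirely to the strict comparison of the two ``interaction correction'' double sums at $d$ and at $d'$. Each correction is a finite sum of $l_1$ or $l_2$ terms of the form $1/k^\lambda$ whose inner summation range $[d+i,\,l+d+i-1]$ shifts rigidly downward by $d-d'\geq 1$ when $d$ is replaced by $d'$. By strict monotonicity of $k\mapsto 1/k^\lambda$ on $\mathbb{N}$, every term in the $d'$-range strictly exceeds its partner in the $d$-range; summing these termwise strict inequalities, the interaction correction at $d'$ is strictly larger than the one at $d$. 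Hence the quantity \emph{subtracted} from the common Hurwitz-zeta block is strictly larger for $\mathcal{P}'$ than for $\mathcal{P}$, and evaluating the sign of the resulting difference yields the strict column inequality asserted in the lemma.

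The main (and essentially only) subtlety is index bookkeeping: specializing the general formula of Lemma \ref{lem:rows} to $N=2$ and carefully aligning the two index ranges for a clean termwise comparison of the interaction corrections. Once the ranges are matched, the inequality reduces to the elementary fact that shifting a length-$l$ block of $1/k^\lambda$ values toward smaller $k$ strictly increases the block's sum. For polyominoes with additional strips in the same column ($N>2$), only the two correction sums involving the displaced gap are affected by the shift, and exactly the same termwise monotonicity argument delivers the strict inequality for the full column contribution; the horizontal statement follows by the obvious axis relabeling and swapping of $Per_\lambda^V$ for $Per_\lambda^H$.
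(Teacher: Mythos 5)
Your proof is correct and follows essentially the same route as the paper's: specialize Lemma~\ref{lem:rows} to the column containing the two strips, cancel the common Hurwitz-zeta blocks, and compare the interaction-correction sums at the two gap sizes termwise via the strict monotonicity of $k\mapsto 1/k^{\lambda}$ (the paper carries this out for a gap reduction by one, pairing terms as $1/(k-1)^{\lambda}-1/k^{\lambda}$, and notes that $N>2$ and the horizontal case follow in the same way). Be aware only that what your computation establishes — exactly as in the paper's own proof, which concludes $Per^V_{\lambda}(S_1\cup S_2)-Per^V_{\lambda}(S_1'\cup S_2')>0$ — is that the configuration with the \emph{reduced} gap has strictly \emph{smaller} column contribution, which is the direction actually used in the main algorithm, even though the lemma's wording appears to interchange the roles of $\mathcal{P}$ and $\mathcal{P}'$.
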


\begin{lemma}\label{lem:reduction_strips}
Let $S_1,\dots,S_N$ be $N$, $N \in \mathbb{N}$, vertical strips belonging to the same column $c_{\mathcal{P}}^1$ with lengths $l_1,\ldots,l_N \geq 1$. Furthermore, let $d_i$ be the distance between the strips $S_i$ and $S_{i+1}$ for every $i=1,\ldots,N-1$. Let $A_i$ be the set that contains the rows that intersect $S_i \cap \mathcal{P}$ for $i=1,\ldots,N$. Denote by $c_{\mathcal{P}}^2$, $c_{\mathcal{P}}^2 \neq c_{\mathcal{P}}^1$, another column that intersects $\mathcal{P}$ and let $S_1^{(1)},\ldots,S_1^{(m_1)}, S_2^{(1)},\ldots,S_2^{(m_2)},\ldots,S_N^{(1)},\ldots, S_N^{(m_N)}$, with $m_1,\ldots,m_N \in \mathbb{N}$, be the vertical strips obtained by intersecting $c^2_{\mathcal{P}} \cap \mathcal{P} \cap A_i$ for $i=1,\ldots,N$. If the length $l_i' \geq 0$, of $\bigcup_{j=1}^{m_i} S_i^{(j)}$ is such that $l_i' \leq l_i$ for every $i=1,\ldots,N$, then there exists another polyomino $\mathcal{P}'$ such that the nonlocal perimeter along these two columns satisfies $Per_{\lambda}(c_{\mathcal{P}}^1 \cup c_{\mathcal{P}}^2) \geq Per_{\lambda}(c_{\mathcal{P'}}^1 \cup c_{\mathcal{P}'}^2)$, otherwise $Per_{\lambda}(c_{\mathcal{P'}}^1 \cup c_{\mathcal{P}'}^2) =  Per_{\lambda}(c_{\mathcal{P}}^1 \cup c_{\mathcal{P}}^2)$, where $c^{1,2}_{\mathcal{P'}}$ are the same columns in the polyomino $\mathcal{P}'$.

Again, the analogous results hold by considering the horizontal strips belonging to the same row. 
\end{lemma}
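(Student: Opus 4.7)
The strategy is to construct $\mathcal{P}'$ by modifying $\mathcal{P}$ only on column $c^2_{\mathcal{P}}$: for every $i$ with $l_i' \geq 1$, consolidate the $m_i$ sub-strips $S_i^{(1)}, \ldots, S_i^{(m_i)}$ of $c^2_{\mathcal{P}} \cap \mathcal{P} \cap A_i$ into a single vertical strip of length $l_i'$ contained in the row-window $A_i$. The hypothesis $l_i' \leq l_i = |A_i|$ is precisely what ensures that such a consolidated strip fits inside $A_i$, so the resulting $\mathcal{P}'$ is still a polyomino; by construction $\mathcal{P}$ and $\mathcal{P}'$ coincide on $c^1_{\mathcal{P}}$ and on every column different from $c^2$.

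The consolidation is carried out as a finite sequence of elementary shifts on column $c^2$: at each step we pick two adjacent sub-strips $S_i^{(j)}, S_i^{(j+1)}$ inside the same window $A_i$ that are currently at distance $\geq 2$ and shift one toward the other, reducing their mutual gap by at least one. Lemma \ref{reduction_distance} then gives a strict decrease of the vertical nonlocal perimeter contribution along $c^2$ at every such elementary step. Iterating yields, after finitely many steps, a configuration in which each $A_i$ carries a single consolidated sub-strip of length $l_i'$; the position of this strip inside $A_i$ may then be chosen so as to minimize the remaining inter-window gaps along $c^2$, and any such positioning shift is again an instance of Lemma \ref{reduction_distance} which can only reduce the contribution. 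Because $\mathcal{P}' \cap c^1_{\mathcal{P}'} = \mathcal{P} \cap c^1_{\mathcal{P}}$ and Lemma \ref{lem:rows} lets one decompose the vertical perimeter column by column, we conclude
\[
Per_{\lambda}\bigl(c^1_{\mathcal{P}'} \cup c^2_{\mathcal{P}'}\bigr) \;\leq\; Per_{\lambda}\bigl(c^1_{\mathcal{P}} \cup c^2_{\mathcal{P}}\bigr),
\]
with strict inequality as soon as some $m_i \geq 2$. In the degenerate case where every $A_i$ already contains a single sub-strip of length exactly $l_i'$, no shift is required, one can simply take $\mathcal{P}' = \mathcal{P}$, and equality holds; this accounts for the ``otherwise'' clause.

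The main technical point I foresee is to ensure that the elementary shifts can always be arranged so that Lemma \ref{reduction_distance} applies cleanly: the two sub-strips involved must lie in the same window $A_i$, must be separated by at least one empty site, and the shift must not cause a collision with a sub-strip of a different window $A_j$. This is exactly where the assumption $l_i' \leq l_i$ becomes crucial, since it guarantees that each consolidated strip remains strictly inside its own window $A_i$; hence the gaps to the consolidated strips in $A_{i-1}$ and $A_{i+1}$ along $c^2$ stay at least $d_{i-1}$ and $d_i$ respectively, preventing any unwanted merging across different windows and keeping the iterative application of Lemma \ref{reduction_distance} well-defined until the target configuration is reached.
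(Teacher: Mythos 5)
Your construction is genuinely different from the paper's, and the difference is where the gap lies. The paper never shifts squares vertically inside $c^2_{\mathcal{P}}$: it moves unit squares \emph{horizontally} from $c^1_{\mathcal{P}}$ into $c^2_{\mathcal{P}}$ so as to fill every hole of $c^2_{\mathcal{P}}\cap\bigcup_i A_i$. Since $c^1_{\mathcal{P}}$ is full on each $A_i$, this simply swaps the occupancy patterns of the two columns inside $\bigcup_i A_i$; the vertical contributions of the two columns are literally exchanged, the row-wise horizontal interaction between them is unchanged, and the only net effect is the additional non-negative interaction gained between the now-full portion of $c^2$ and whatever strips $c^2$ carries outside $\bigcup_i A_i$ (the $K_1$ versus $K_2$ terms in the paper, which produce the $\geq$ versus $=$ dichotomy). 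No consolidation argument is needed at all.

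Your route instead hinges on the claim that merging $S_i^{(1)},\dots,S_i^{(m_i)}$ into a single strip inside $A_i$ can only decrease the vertical contribution along $c^2$, justified by iterating Lemma~\ref{reduction_distance}. That lemma, however, is proved for a column containing exactly \emph{two} strips, with nothing else in the column. In your setting $c^2$ generally contains further strips: the sub-strips of the other windows $A_j$ and, crucially, strips of $c^2$ lying outside $\bigcup_i A_i$. Shifting $S_i^{(j)}$ toward $S_i^{(j+1)}$ moves it away from these, and the interaction lost can exceed the interaction gained: for instance, if $S_i^{(j)}$ sits at the top of $A_i$ adjacent to a long strip of $c^2$ just above $A_i$, pulling it toward a distant $S_i^{(j+1)}$ costs roughly $2\sum_{u=1}^{|S_i^{(j)}|}u^{-\lambda}\geq 2$ while gaining only $O(D^{-\lambda})$, so the contribution along $c^2$ strictly \emph{increases}. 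Hence the iteration is not monotone, and the final step ("choose the position of the consolidated strip to minimize the remaining gaps") is neither well defined — the strip cannot be simultaneously close to its neighbours above and below — nor an instance of Lemma~\ref{reduction_distance}. To repair your approach you would need a genuine rearrangement inequality for an arbitrary number of strips in one column (and you would also have to check explicitly that your vertical moves do not increase the horizontal interaction between $c^1$ and $c^2$); the paper's swap construction is designed precisely to avoid both issues.
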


\begin{lemma}\label{lem:increasing_alpha_2}
The following function $f: \mathbb{N} \setminus \{1\} \to \mathbb{R}$ defined as
    \begin{align*}
        f(x)= & x^2 \sum_{r=1}^{x^2+x} \frac{1}{r^\lambda} - x^2 \sum_{r=1}^{(x+1)^2} \frac{(x+1)^2+1-r}{r^\lambda}- (x+1)^2 \sum_{r=1}^{x^2-1} \frac{x^2-r}{r^\lambda} + 2(x^2+x)\sum_{r=1}^{x^2+x-1} \frac{x^2+x-r}{r^\lambda} -x^2 \sum_{r=x^2+x+1}^{x^2+2x+2} \frac{1}{r^\lambda}
    \end{align*}
    is positive for $\lambda>1.8$.
\end{lemma}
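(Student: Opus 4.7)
The plan is to reduce $f(x)$ to a linear combination of Hurwitz-zeta-type partial sums, exploit an algebraic cancellation of the leading pieces, and isolate a positive contribution that dominates the residuals asymptotically, closing the gap for small $x$ by direct computation.

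First, I would apply the identity
\begin{equation*}
\sum_{r=1}^{K-1} \frac{K-r}{r^\lambda} = K\, S_\lambda(K) - S_{\lambda-1}(K),\qquad S_\alpha(N):=\sum_{r=1}^{N}\frac{1}{r^\alpha},
\end{equation*}
to each weighted sum appearing in $f(x)$. Writing $K_1 := x^2$, $M := x^2+x$, $K_2 := (x+1)^2$, the arithmetic identities $M^2 = K_1 K_2$ and $x^2 + K_2 - 2M = 1$ force the coefficient of $S_\lambda(M)$ to vanish identically, while the coefficient of $S_{\lambda-1}(M)$ equals exactly $+1$. The $S_\lambda$-contribution thereby collapses to block sums $A_1 := \sum_{r=K_1+1}^{M} r^{-\lambda}$ and $A_2 := \sum_{r=M+1}^{K_2} r^{-\lambda}$ (each of length $O(x)$ at scale $r\sim x^2$) weighted by $O(x^4)$ coefficients, and the $S_{\lambda-1}$-contribution splits as $S_{\lambda-1}(M)$ plus analogous block remainders $B_1,B_2$.

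Next, I would perform a two-term Taylor expansion of $(1+1/x)^{1-\lambda}$ and $(1+1/x)^{2-\lambda}$ to expose a second round of cancellations. After this, the $S_\lambda$-residual $M^2(A_1-A_2)$ contributes $(2\lambda-1)\, x^{4-2\lambda}$, the $S_{\lambda-1}$-residual $x^2 B_2 - K_2 B_1$ contributes $-\lambda\, x^{4-2\lambda}$, and for $1<\lambda<2$ the main term $S_{\lambda-1}(M)$ is asymptotic to $x^{2(2-\lambda)}/(2-\lambda)$. Summing,
\begin{equation*}
f(x) \sim \left(\frac{1}{2-\lambda}+\lambda-1\right) x^{2(2-\lambda)} \qquad \text{as } x \to \infty,
\end{equation*}
with a strictly positive prefactor for every $\lambda \in (1,2)$; for $\lambda \ge 2$ one instead has $f(x) \to \zeta(\lambda-1) > 0$, handled in the same framework. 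This secures positivity of $f(x)$ for $x \ge x_0(\lambda)$.

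Finally, $x_0(\lambda)$ would be made effective on $\lambda \in (1.8,\Lambda]$ by explicit bounds on the Taylor remainders, and the finitely many remaining cases are handled by direct evaluation of $f(x)$ at $\lambda = 1.8$ (a closed rational expression in values $1/r^\lambda$ with $r \le x_0^2 + 2x_0 + 2$). The inequality is lifted from $\lambda = 1.8$ to all larger $\lambda$ by term-by-term monotonicity of $r^{-\lambda}$ after grouping positive and negative contributions with compatible sign behavior. The main obstacle is the double cancellation: the $\zeta(\lambda)$-scale leading terms cancel by $M^2 = x^2(x+1)^2$, so one must track the first subleading $x^{4-2\lambda}$-pieces from both the $A$- and $B$-residuals together with $S_{\lambda-1}(M)$ to determine the sign; since the exponent $2(2-\lambda)$ degenerates to zero (and the sum to a logarithm) as $\lambda \to 2$, careful constant tracking is required to ensure the prefactor $\tfrac{1}{2-\lambda}+\lambda-1$ stays positive uniformly across the critical range just above $\lambda = 1.8$.
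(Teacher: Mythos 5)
Your route is genuinely different from the paper's. The algebraic core of your plan is sound and verifiable: writing each weighted sum as $K\,S_\lambda(K)-S_{\lambda-1}(K)$, the coefficient of the $S_\lambda$-block vanishes because $M^2=K_1K_2$ and the coefficient of the $S_{\lambda-1}$-block equals $x^2+K_2-2M=1$, so $f(x)=S_{\lambda-1}(M)+(\text{block residuals})$, and the surviving term $S_{\lambda-1}(M)\sim x^{2(2-\lambda)}/(2-\lambda)$ (resp. $\to\zeta(\lambda-1)$ for $\lambda>2$) dominates. The paper does none of this: it checks $f(2)$ directly, then for $x\ge 3$ discards several explicitly nonnegative groups of terms to obtain a simpler minorant $g(x)$, proves $g(x+1)>g(x)$ by elementary term-by-term comparison of the discrete sums, and finishes with $g(3)>0$. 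The paper's argument is uniform in $\lambda>1.8$ and in $x$ by construction and needs no asymptotics; yours isolates the structural reason the lemma is true (a double cancellation exposing a single divergent $S_{\lambda-1}$ term), which is more illuminating but pushes all the difficulty into quantitative remainder control.

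Three caveats before this could replace the paper's proof. First, a constant slip: $M^2(A_1-A_2)$ contributes $(\lambda-1)\,x^{4-2\lambda}$, not $(2\lambda-1)\,x^{4-2\lambda}$ (pair $r\leftrightarrow r+x$ in $A_1-A_2$ to get $\lambda x^{-2\lambda}$ from the paired differences minus $x^{-2\lambda}$ from the unpaired last term of $A_2$); the total prefactor is then $\tfrac{1}{2-\lambda}-1=\tfrac{\lambda-1}{2-\lambda}>0$ on $(1,2)$, so the sign survives, but the constant you would be tracking "carefully" is wrong. Second, the effective $x_0(\lambda)$ is where essentially all the work lies: near $\lambda=1.8$ the residuals genuinely grow like $x^{0.4}$ and you must beat them by an explicit, not asymptotic, margin, uniformly on a neighbourhood of $\lambda=1.8$; this is not sketched. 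Third, "lifting from $\lambda=1.8$ to all larger $\lambda$ by term-by-term monotonicity" for the finitely many small-$x$ cases is not automatic: $f(x,\cdot)$ is a signed Dirichlet polynomial $\sum_r c_r r^{-\lambda}$ and some grouping into pieces that are individually monotone and of controlled sign must actually be exhibited (the paper's base cases $f(2)$ and $g(3)$ face the same issue and are handled by explicit estimates, not by a single evaluation at $\lambda=1.8$).
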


\subsection{Proof of Proposition~\ref{prop:no_rectangle}}
\label{poof:proop3.1}

Let us consider a polyomino $\mathcal{P} \in \mathcal{M}_n \setminus \mathscr{M}_n^{\text{ext}}$  and apply the following \texttt{\underline{MAIN ALGORITHM}} which we will explain below. The algorithm is based on the idea of investigating columns resp. rows of the general polyomino and try to fill \emph{holes}, in order to reduce the nonlocal perimeter. In case the main algorithm produces a cross-convex polyomino, we can exclude it from the set of minimizers by applying the \texttt{\underline{ALGORITHM FOR CROSS-CONVEX $\mathcal{P}$}} which will be defined below as well.

\begin{figure}[htb!]
\begin{center}
    \includegraphics[scale=0.27]{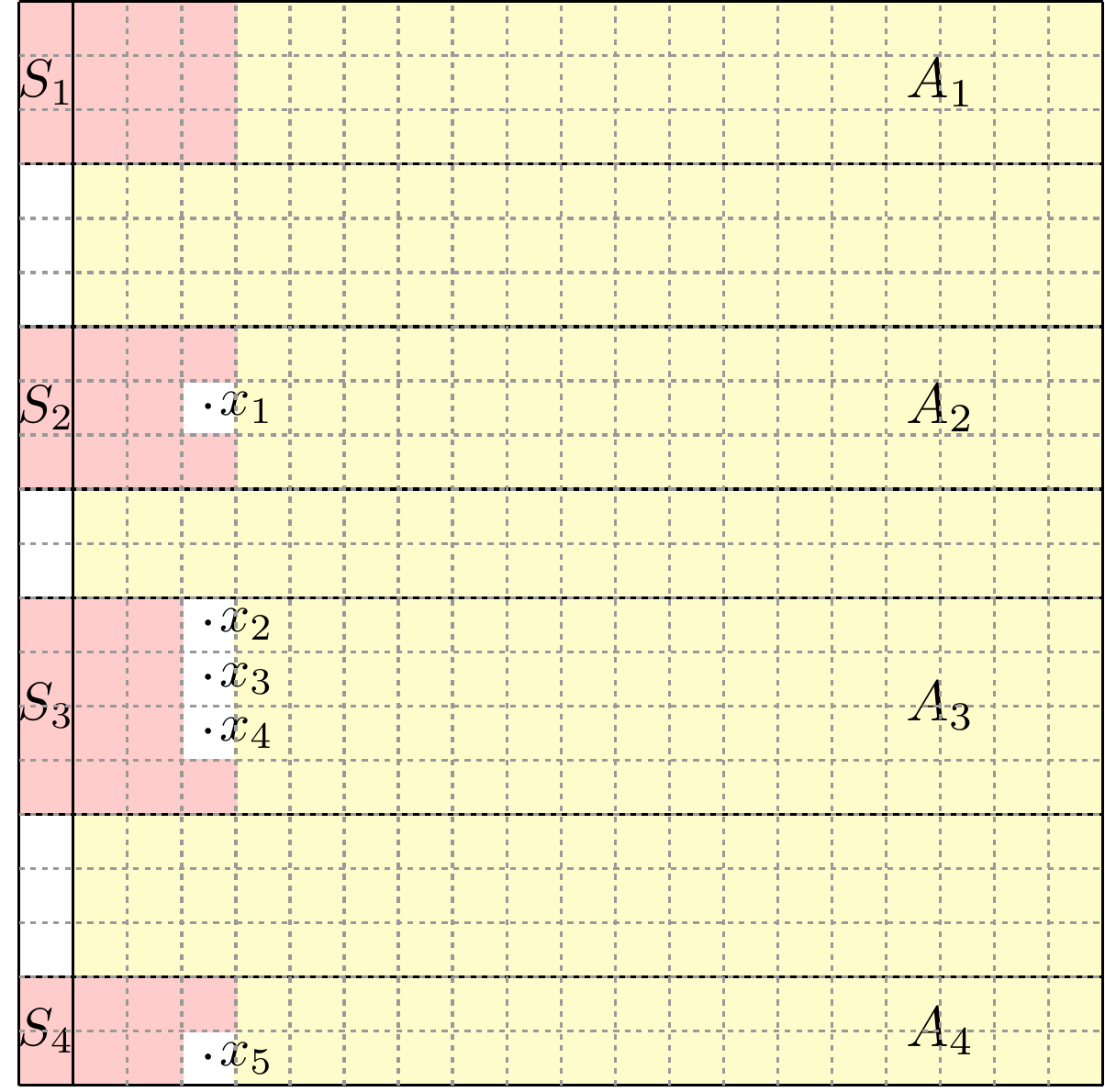} 
\,\,\,\,\,
    \includegraphics[scale=0.27]{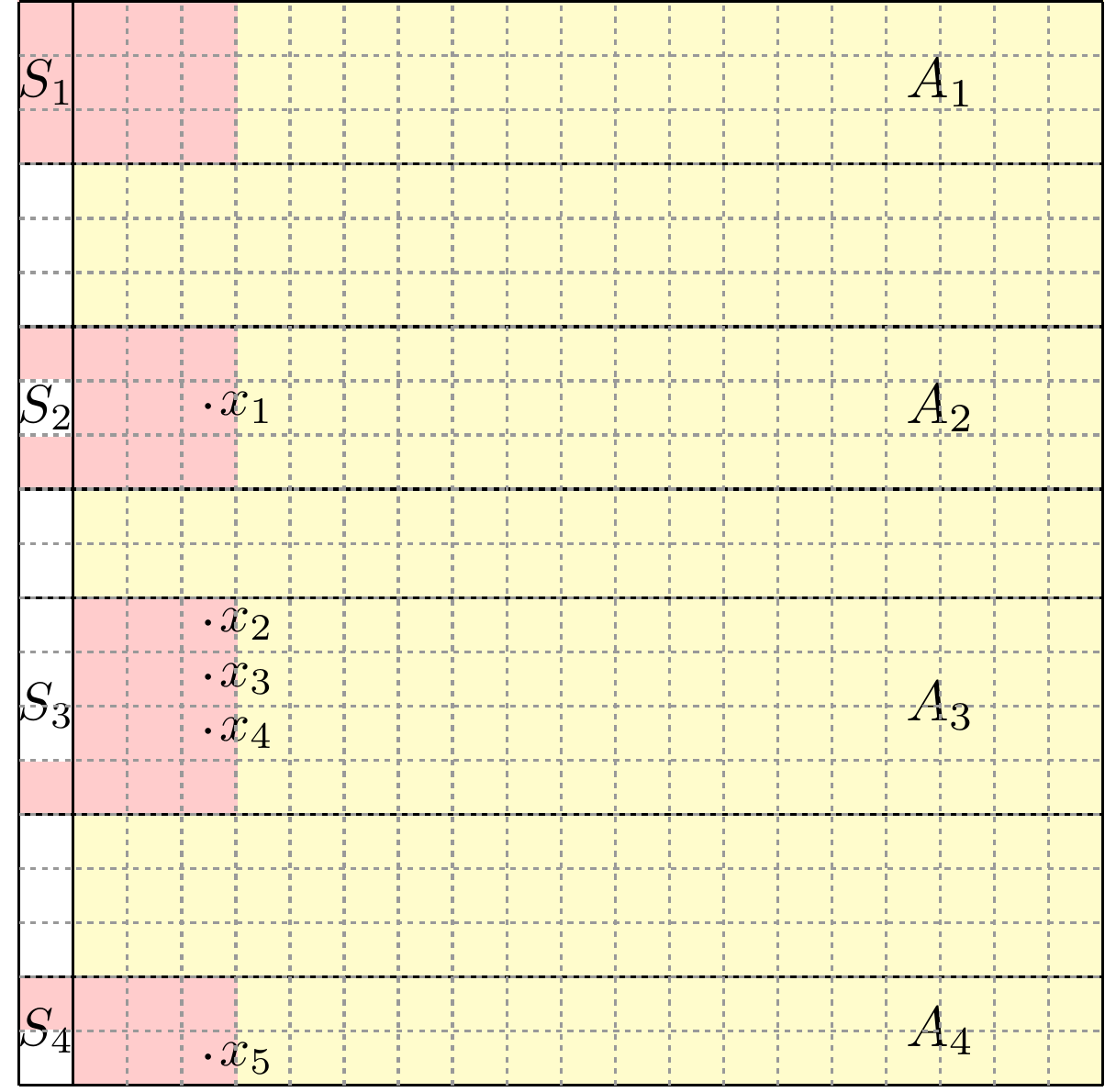}
\,\,\,\,\,
    \includegraphics[scale=0.27]{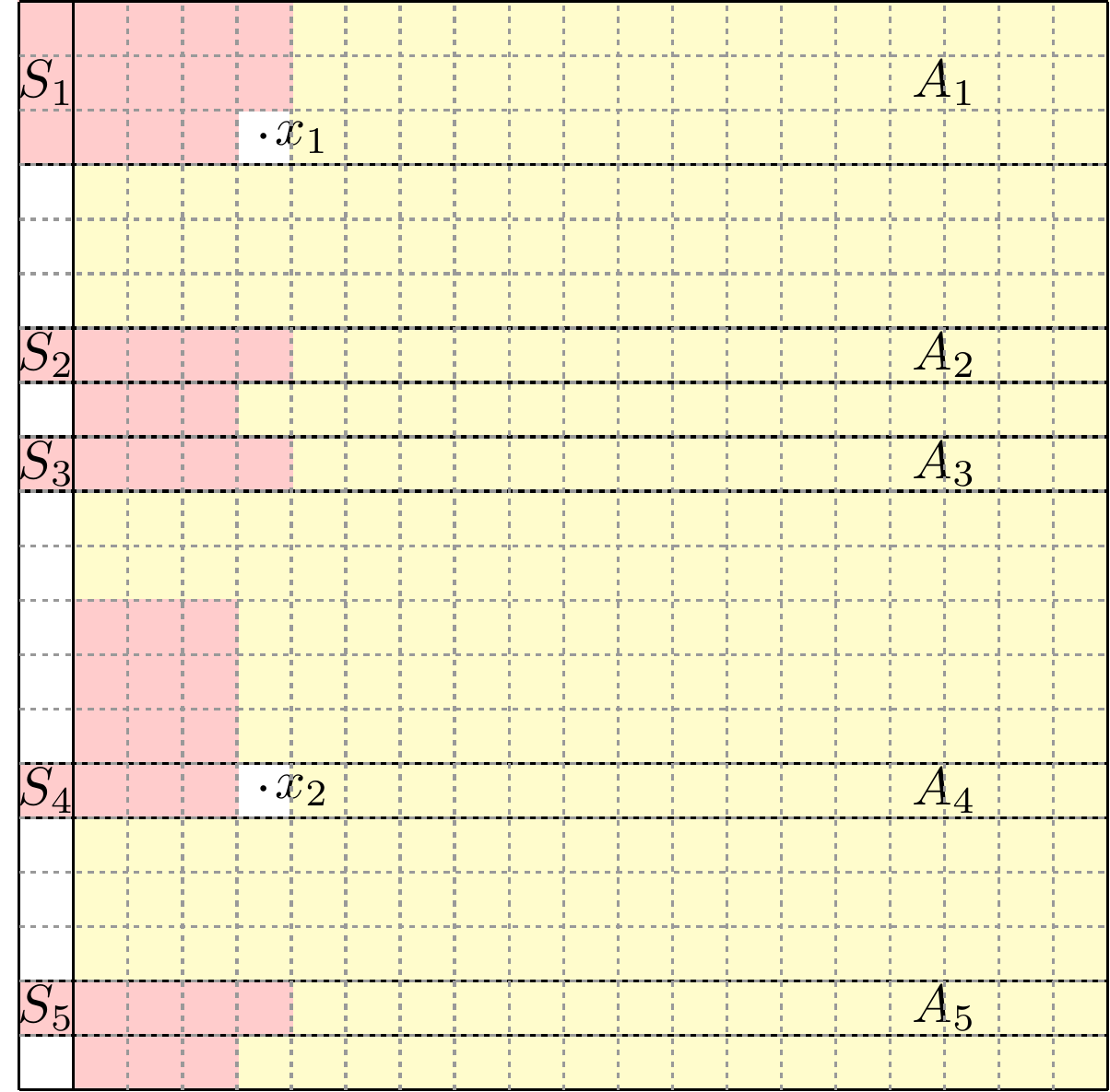} 
    \caption{On the left hand side, an example of polyomino $\mathcal{P}$ with vertical strips $S_1, \ldots ,S_4$ and the corresponding rows $A_1, \ldots ,A_4$. The red unit squares are part of $\mathcal{P}$ whereas the white ones are not. In yellow are displayed the unit squares in the rectangle that can belong to $\mathcal{P}$ or not, i.e. were not investigated yet.
    In the central figure, there is the polyomino $\mathcal{P}'$ obtained from $\mathcal{P}$ by a translation along the horizontal axes of the unit squares  $Q(x_1), \ldots, Q(x_5)$. On the right hand side, we have the polyomino $\mathcal{P}''$ obtained from the polyomino $\mathcal{P}'$ in the case that there is only one horizontal strip in all rows with an empty square $Q(x'_1), Q(x'_2)$ which are not in the polyomino. }
 \label{fig:no_rectangle}
\end{center}
    \end{figure}

\texttt{\large \underline{MAIN ALGORITHM}}
\begin{itemize}
\item[\texttt{\underline{Step 0}}.] {(Initialization). Set $ \mathcal{ \tilde P}:=\mathcal{P}$ and $t:=1$.}

    \item[\texttt{\underline{Step 1}}.] (Setup). Consider the smallest rectangle $\mathscr{R}_{a,b}$ containing $\mathcal{\tilde{P}}$ with $a\in \mathbb{N}$ the vertical side length and $b$ the horizontal side length. Order the columns of $\mathscr{R}_{a,b}$ according to the lexicographic order and analyze the $t$-th column of $\mathscr{R}_{a,b}$.
    
    Let $N \geq 1$ be the number of vertical strips in the $t$-th column of $\mathcal{\tilde{P}} \cap \mathscr{R}_{a,b}$ which we denote by $S_1,\ldots,S_N$. Construct $A_i$ as the set of rows that intersects $S_i \cap \mathcal{\tilde{P}}$ for $i=1, \ldots,N$, see Figure \ref{fig:no_rectangle}. 

    Set $k=t+1$. 
    
    \item[\texttt{\underline{Step 2}}.] (Investigating empty squares (holes) in $\mathscr{R}_{a,b}$). 
 Consider the $k$-th column and look for empty unit squares $Q(x) \not \subset \mathcal{\tilde{P}}$ in this column starting from the top to the bottom.  
    If there is at least one empty unit square $Q(x) \subset A_i \cap \mathscr{R}_{a,b}$ for some $i \in \{1,\ldots,N \}$ such that $ Q(x) \not \subset \mathcal{ \tilde P}$, see the left hand side in Figure \ref{fig:no_rectangle}, then go to Step 3.     
    If the $k$-th column has all unit squares present, then repeat Step 2 with $k=k+1$ until $k\leq b$.
    If no columns have empty squares go to Step 4.
    
    \item[\texttt{\underline{Step 3}}.] (Construction of a polyomino $\mathcal{P}'$ with smaller nonlocal perimeter in case of empty squares). \\
    Let $Q(x_1),\ldots, Q(x_m) \subset \bigcup_{i=1}^N A_i \cap \mathscr{R}_{a,b}$, $m \in \mathbb{N}$, be the empty squares in the $k$-th column such that \\
    $Q(x_1), \ldots , Q(x_m) \not \subset \mathcal{\tilde P}$, see Figure \ref{fig:no_rectangle}. 
    Denote by $W$ the union of all rows containing the empty squares $Q(x_1), \ldots , Q(x_m)$. 
    Construct a polyomino $\mathcal{P}'$ from $\mathcal{\tilde P}$ as follows.
    We move $m$ unit squares of $S_1, \ldots , S_N$ belonging to $W$ horizontally to fill the empty squares  $Q(x_1), \ldots, Q(x_m)$ respectively. See the left and central figure in Figure \ref{fig:no_rectangle}. Observe that, by construction, the lengths of the vertical strips in the k-th column in $\mathcal{P}'$ are equal to or greater than the lengths of the vertical strips in the first column of $\mathcal{\tilde P}$. For this reason, we obtain  by Lemma \ref{lem:reduction_strips} that
    \[
     \begin{cases}
  Per^V_{\lambda}(\mathcal{P'}) -     Per^V_{\lambda}(\mathcal{\tilde{P}})= 0 & \text{ if all strips have the same length}, \\
  Per^V_{\lambda}(\mathcal{P'}) -     Per^V_{\lambda}(\mathcal{\tilde{P}})> 0 & \text{ if the length of at least one strip in } \mathcal{P}' \text{ is larger than the one in } \mathcal{\tilde{P}}. 
    \end{cases}
    \]
    We analyze the rows of $W$ to compute the horizontal contribution.  
\begin{itemize}
    \item[3.1)] If there are at least two horizontal strips in one of these rows, then the horizontal nonlocal perimeter contribution is decreased along this row by applying Lemma \ref{reduction_distance}.  Thus, we have $\mathcal{P}'$ such that $Per_{\lambda}(\mathcal{P}')<Per_{\lambda}(\mathcal{\tilde P})$ and the procedure stops.
    
    \item[3.2)] If there is only one horizontal strip in each of these rows, then the horizontal nonlocal perimeter contribution is unchanged, i.e.
$Per^H_{\lambda}(\mathcal{P}') =  Per^H_{\lambda}(\mathcal{\tilde{P}})$. If the nonlocal perimeter is unchanged, then return to  Step 1, setting $\mathcal{\tilde{P}} = \mathcal{P}'$. Otherwise, we have $\mathcal{P}'$ such that $Per_{\lambda}(\mathcal{P}')<Per_{\lambda}(\mathcal{\tilde P})$ and the procedure stops.
\end{itemize}  

    \item[\texttt{\underline{Step 4}}.] (Rotation of $\mathcal{ \tilde P}$ in case of no empty squares). If $t>1$ go to Step 5. Note that in this case all the horizontal strips in $ A_i$ have the same length equal to $b-t+1$ for $i=1,\dots,N$. Then, starting from the north side of $\mathscr{R}_{a,b}$ denote the horizontal strips in the first row of $\mathscr{R}_{a,b}$ by $S'_{1}, \ldots , S'_{J}$, $J \in \mathbb{N}$, and define the columns $B_i$ that intersect $S'_{i}$ for each $i=1,\ldots,J$; see Figure \ref{fig:rotation}.
    If all the vertical strips in $B_i$ have the same length equal to $a$ for $i=1,\dots,J$, see the left hand side in Figure \ref{fig:chessboard_convex}, then go to Step 5. Otherwise, rotate the polyomino $\mathcal{\tilde{P}}$ by $-\pi/2$ and call it $\mathcal{P}'$ and repeat the same procedure from Step 1 setting $\mathcal{\tilde P}=\mathcal{P'} $. 

        \begin{figure}[htb!]
    \begin{center}
    \includegraphics[scale=0.3]{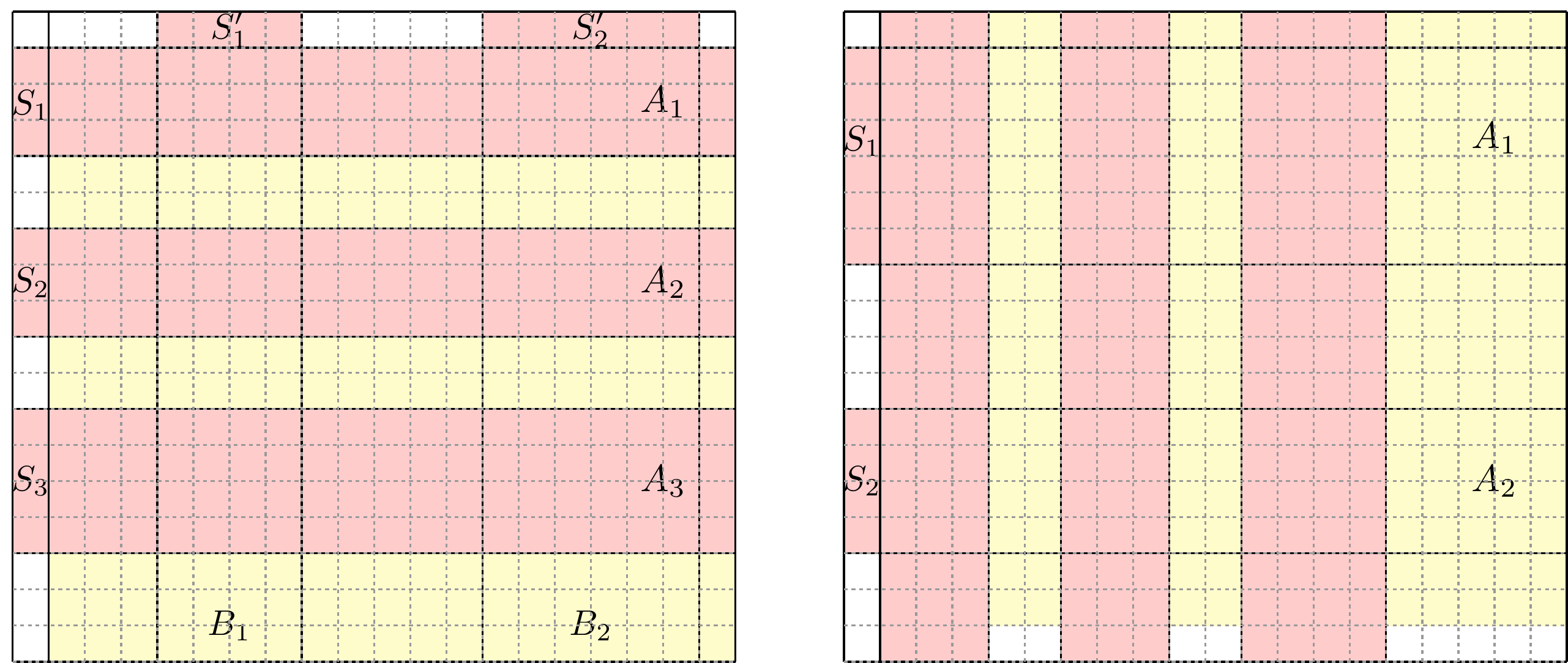} 
    \caption{On the left, an example of polyomino $\mathcal{P}$ with all horizontal strips of $A_i$ having the same length, for $i=1,\ldots,N$. On the right, the polyomino $\mathcal{P'}$ obtained from $\mathcal{P}$ by a rotation of $-\pi/2$.}
    \label{fig:rotation}
    \end{center}
    \end{figure}

    \item[\texttt{\underline{Step 5}}.] (Reducing the chessboard of $\mathcal{\tilde{P}}$ or \emph{convexification}). The polyomino $\mathcal{ \tilde P}$ is such that the horizontal strips in $A_i$ intersect the east side of $\mathscr{R}_{a,b}$ for every $i=1, \ldots ,N$ and the vertical strips in $B_j$ intersect the south side of $\mathscr{R}_{a,b}$ for $j=1, \ldots ,J$, see the left figure in Figure \ref{fig:chessboard_convex}. 
    \begin{figure}[htb!]
    \begin{center}
    \includegraphics[scale=0.3]{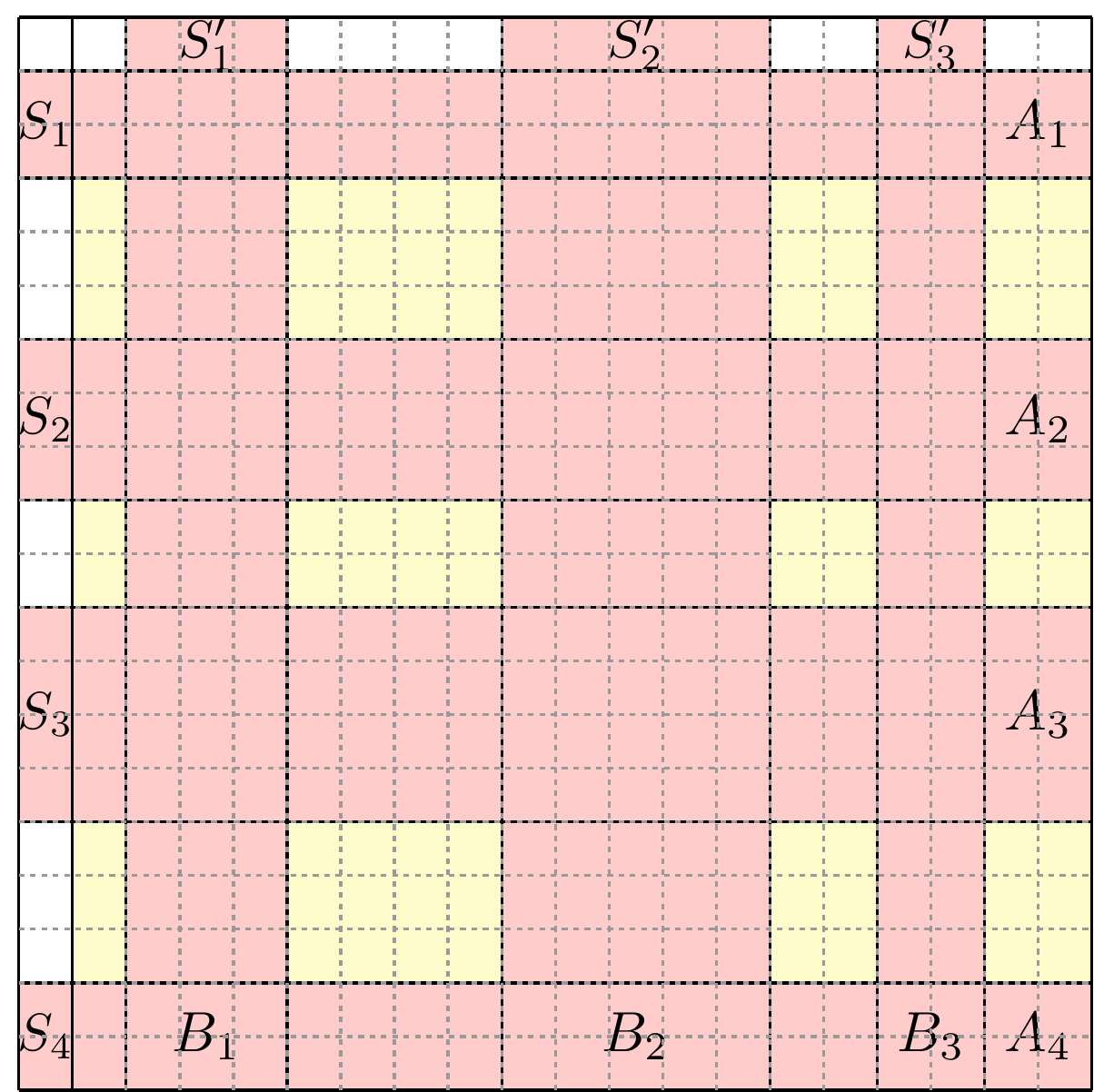} 
    \,\,\,\,\,\,\,\,\,\, 
    \includegraphics[scale=0.3]{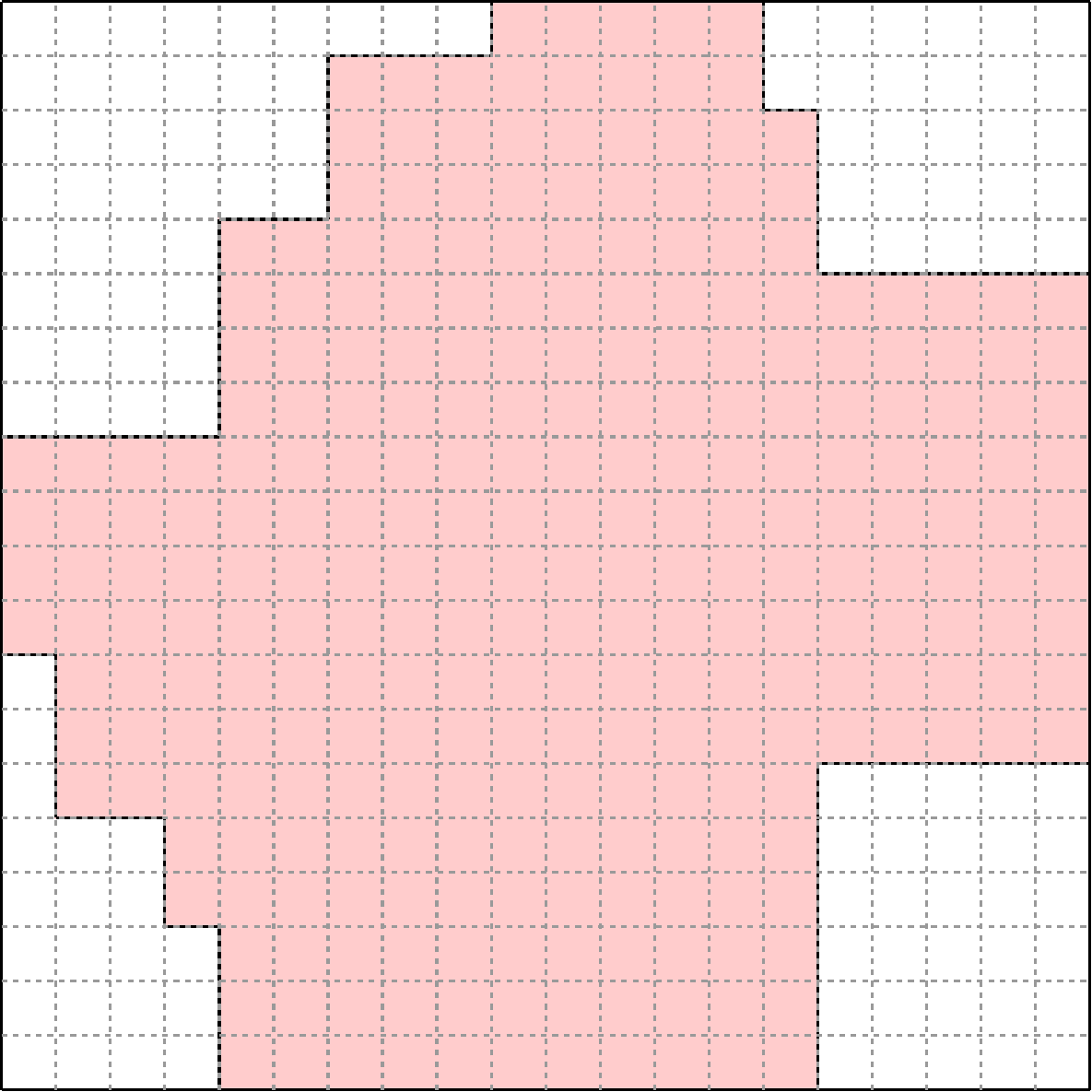}
    \caption{On the left, an example of polyomino $\mathcal{P}$ with  vertical strips $S_1,...,S_4$ and the corresponding sets $A_1,...,A_4$. The horizontal strips are denoted by $S'_1,...,S'_3$ and the corresponding sets by $B_1,...,B_3$. In red there are unit squares of $\mathcal{P}$. In white the unit squares which are not in $\mathcal{P}$. In yellow we see  the unit squares in the rectangle that are not analyzed yet and can belong to $\mathcal{P}$ or not.
    On the right we have an example of a cross-convex polyomino.}
    \label{fig:chessboard_convex}
    \end{center}
    \end{figure}
    \begin{figure}[htb]
     \begin{center}
    \includegraphics[scale=0.3]{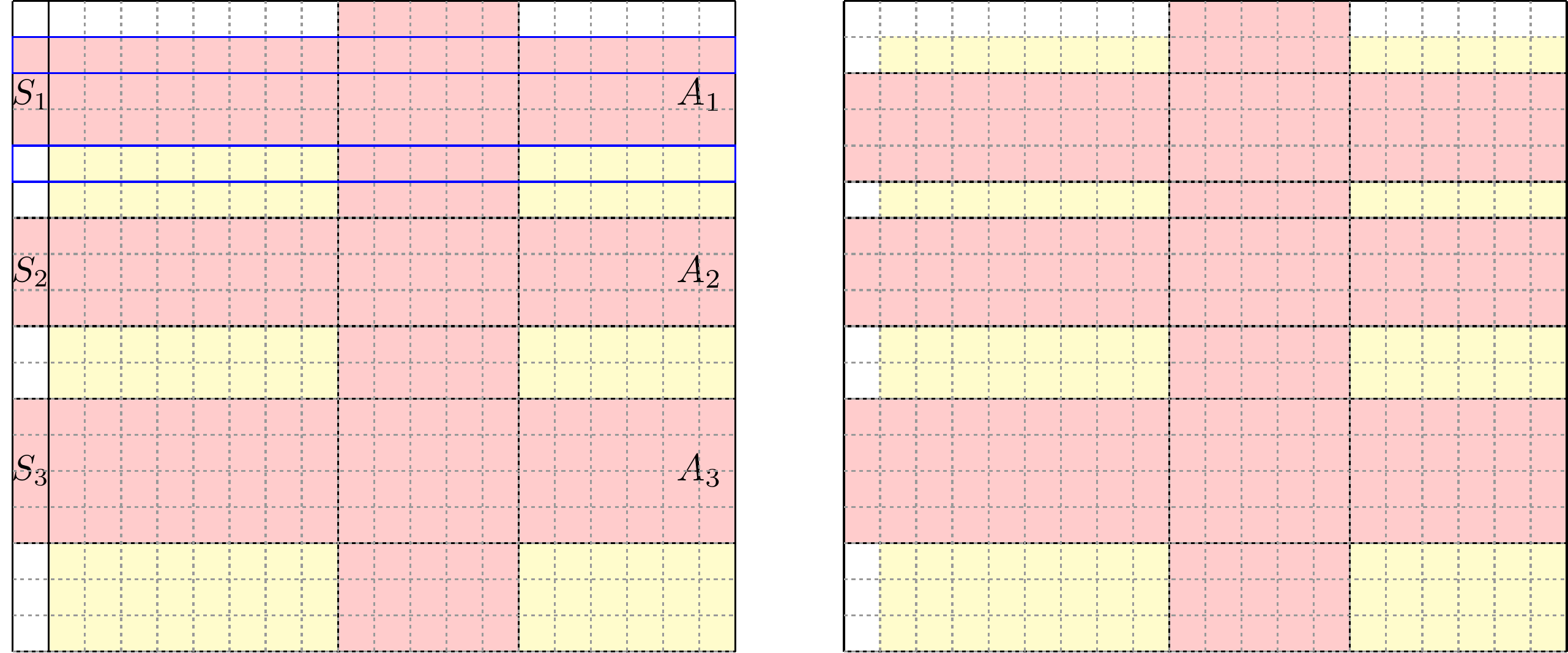} 
    \caption{On the left, there is an example of polyomino $\mathcal{P}$ with $N \neq 1$ and $J=1$. On the right, the polyomino $\mathcal{P}'$ is obtained from $\mathcal{P}$ as described in case (5.1). In particular, in red the unit squares of the polyominos are shown, in white the unit squares outside of the polyominos, and in yellow the unit squares of $\mathscr{R}_{a,b}$ that can belong to the polyominos or not. We highlight in blue, the rows that we exchange when we construct $\mathcal{P}'$ from $\mathcal{P}$.}
    \label{fig:chessboard_nocross}
    \end{center}
    \end{figure}
    We distinguish two cases:
    \begin{itemize}
        \item[5.1)] $N \neq 1$ or $J \neq 1$ or both $N,J \neq 1$.

        Assume without loss of generality that $N \neq 1$. Then, we construct $\mathcal{P}'$ by exchanging the first horizontal strip in $A_1$ intersecting $S_1$ with the horizontal strip in the first row that follows the set $A_1$, see Figure \ref{fig:chessboard_nocross}. 
     By Lemma \ref{reduction_distance}, we conclude that the nonlocal perimeter of $\mathcal{P}'$ is strictly smaller than that of $\mathcal{\tilde{P}}$, and the procedure stops.
    
    \item[5.2)] $N = 1$ and $J=1$. 
    We distinguish two cases:
    \begin{itemize}
    \item[5.2.a)] There is at least a column (or a row) of $\mathscr{R}_{a,b}$ that contains at least two vertical (or horizontal) strips, see the left figure in Figure \ref{fig:caseb}. Go to Step 1 with $t:=t+1$.
    
    \item[5.2.b)] All columns of $\mathscr{R}_{a,b}$ contain only one horizontal strip and all rows of $\mathscr{R}_{a,b}$ contain only one vertical strip, see the right figure in Figure \ref{fig:chessboard_convex}. By assumption and by the properties of the algorithm, $\mathcal{\tilde P}$ is not in $\mathscr{M}_n^{\text{ext}}$. However, in this case the polyomino $\mathcal{\tilde P}$ is cross-convex and then we conclude that it cannot be a minimum by applying \texttt{\underline{ALGORITHM FOR CROSS-CONVEX $\mathcal{P}$}} with starting polyomino $\mathcal{\tilde P}$, i.e.,  $\mathcal{C}=\mathcal{\tilde P}$.
    
    \end{itemize}
    \end{itemize}
\end{itemize}

\begin{figure}[htb!]
\begin{center}
    \includegraphics[scale=0.3]{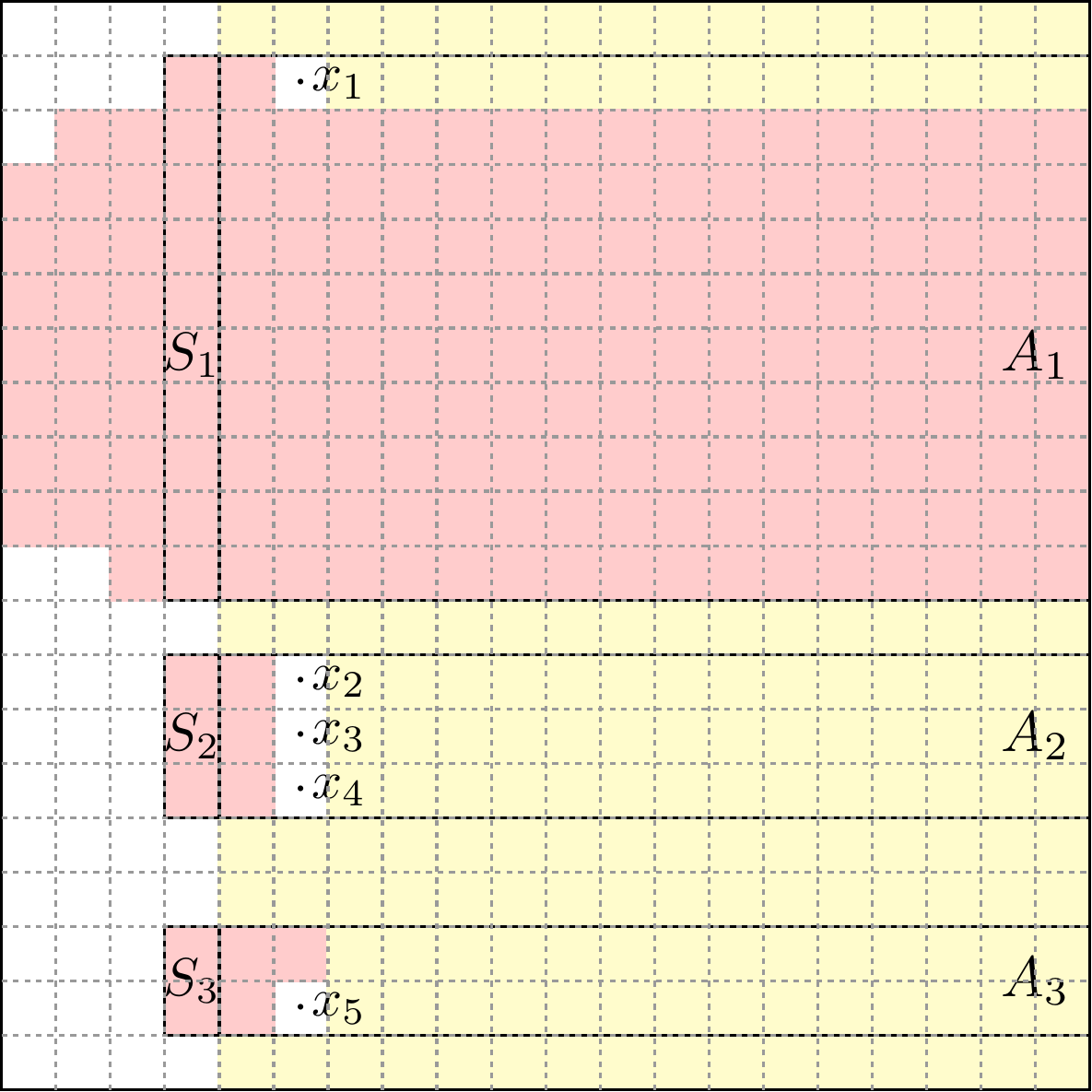} 
    \,\,\,\,\,\,\,\,\,\,
    \includegraphics[scale=0.3]{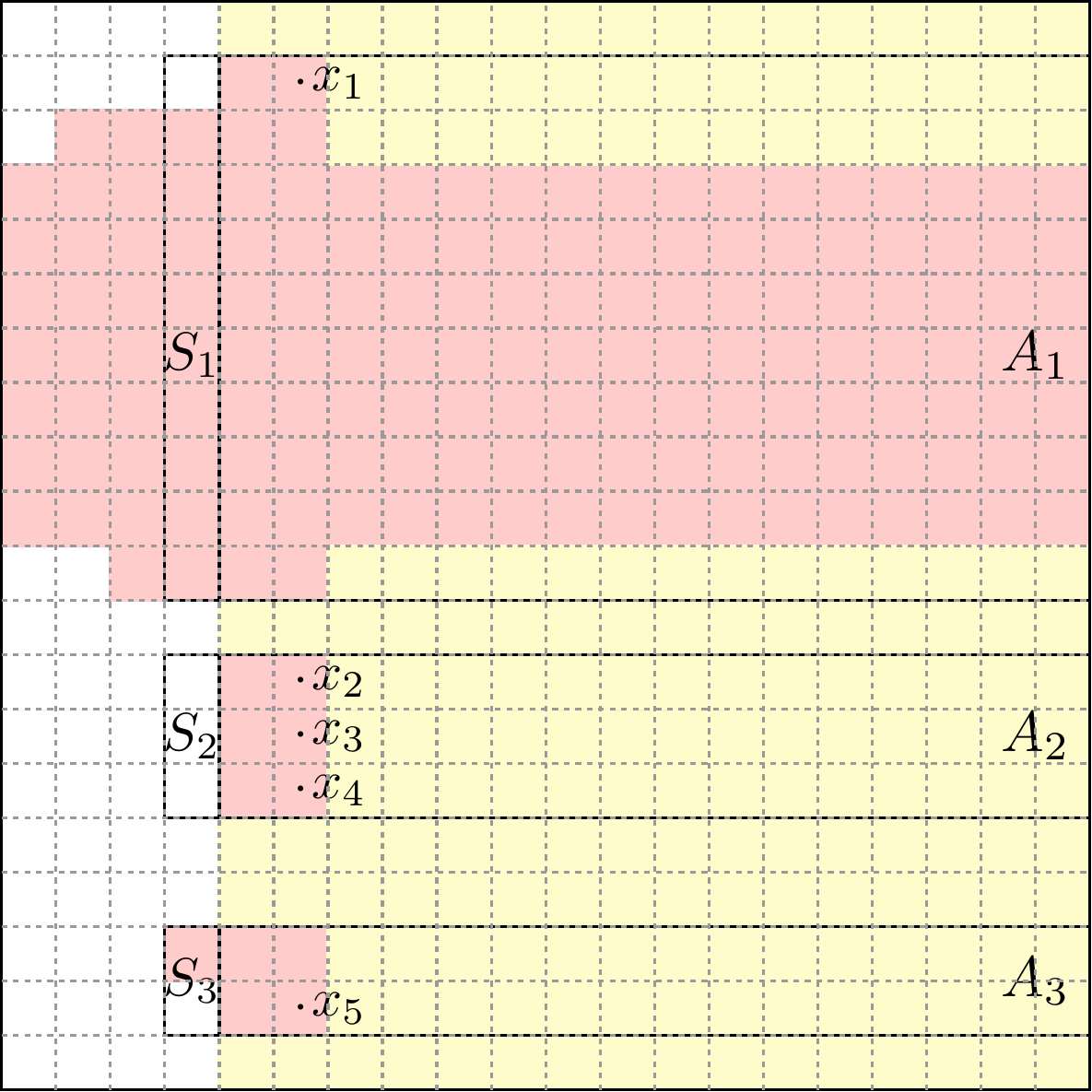} 
    \caption{On the left, there is an example of polyomino $\mathcal{P}$ as described in case (5.2.a). On the right, the polyomino $\mathcal{P}'$ obtained from $\mathcal{P}$ by applying the algorithm. }
 \label{fig:caseb}
\end{center}
    \end{figure}

The following algorithm
generalizes, in the nonlocal perimeter context, the transformations of polyominoes used in the proof of Theorem~2.2 in \cite{alonso1996three}. 
More specifically, for any cross-convex polyomino $\mathcal{C} \in \mathcal{M}_n \setminus \mathscr{M}_n^{\text{ext}}$, the algorithm finds a polyomino with the same area but with strictly smaller nonlocal perimeter, so that $\mathcal{C} \not \in \text{argmin}_{\mathcal{P} \in \mathcal{M}_{n}} \{Per_{\lambda}(\mathcal{P}) \}$.

\medskip
\texttt{\large\underline{ALGORITHM FOR CROSS-CONVEX $\mathcal{P}$}}\\
Given a cross-convex polyomino $\mathcal{C}$ not in $\mathscr{M}_n^{\text{ext}}$, the following algorithm finds first two polyominos with the same nonlocal perimeter of $\mathcal{C}$ but with different shape, and then it gives a polyomino with strictly smaller perimeter.
Before we start, we observe that if $\mathcal{C}$ is a cross-convex polyomino not in $\mathscr{M}_n^{\text{ext}}$, then $\mathcal{C}$ has at least two vertical (or horizontal) strips which are smaller than the others.
 
\begin{itemize}
 \item[\texttt{\underline{Step 0}}.] (Initialization). Set $\mathcal{D}:=\mathcal{C}$. Let $\mathscr{R}_{m_v, m_h}$ be the smallest rectangle containing $\mathcal{D}$ and let $m_v$, $m_h$ be the lengths of its columns and its rows respectively. Set $k:=1$ and $t:=1$.
 \item[\texttt{\underline{Step 1}}.] (Construction of a polyomino $\mathcal{D}'$ with the same nonlocal perimeter). 
We consider the vertical strip $S_k$ in the $k$-th column of $\mathcal{D}$. 
Let $A_k$ be the set of the rows that intersect $S_k$. 
If the horizontal strips in $A_k$ have not the same length, then there exist some empty squares $Q(x_1), \ldots, Q(x_N) \in A_k \cap \mathscr{R}_{m_v,m_h}$, $N \geq 1$. 
We consider the unit squares of $S_k$ in the same rows of the squares $Q(x_1), \ldots , Q(x_N)$ and we move each of them from its position to fill the empty squares $Q(x_i)$, for $i=1, \ldots, N$.
Set $k:=k+1$ and go to Step 2.

\item[\texttt{\underline{Step 2}}.] If $k\leq m_v$, then return to Step 1. Otherwise, call the obtained polyomino $\mathcal{D}'$ and go to Step 3. See the second panel in the Figure \ref{fig:convex3} for an example of $\mathcal{D}'$.

\item[\texttt{\underline{Step 3}}.](Construction of a polyomino $\mathcal{D}''$ with the same nonlocal perimeter).
We consider the $t$-th horizontal strip $S'_t$ in the $t$-th row of $\mathcal{D'}$. 
Let $B_t$ be the set of the columns that intersect $S'_t$. 
If the vertical strips in $B_t$ have not the same length, then there exist some empty squares $Q(x_1'), \ldots , Q(x'_{N'}) \subset B_t \cap \mathscr{R}_{m_v,m_h}$, $N' \geq 1$. 
We consider the unit squares of $S'_t$ in the same columns of the squares $Q(x_1'), \ldots , Q(x'_{N'})$ and we move each of them from its position to $Q(x'_i)$ for $i=1,\ldots ,N'$.
Set $t:=t+1$ and go to Step 4.

\item[\texttt{\underline{Step 4}}.] If $t\leq m_h$, then return to Step 3. Otherwise, call the obtained polyomino $\mathcal{D}''$ and go to Step 5. See the third panel in the Figure \ref{fig:convex3} for an example of $\mathcal{D}''$.

    \begin{figure}[htb!]
\begin{center}
    \includegraphics[scale=0.23]{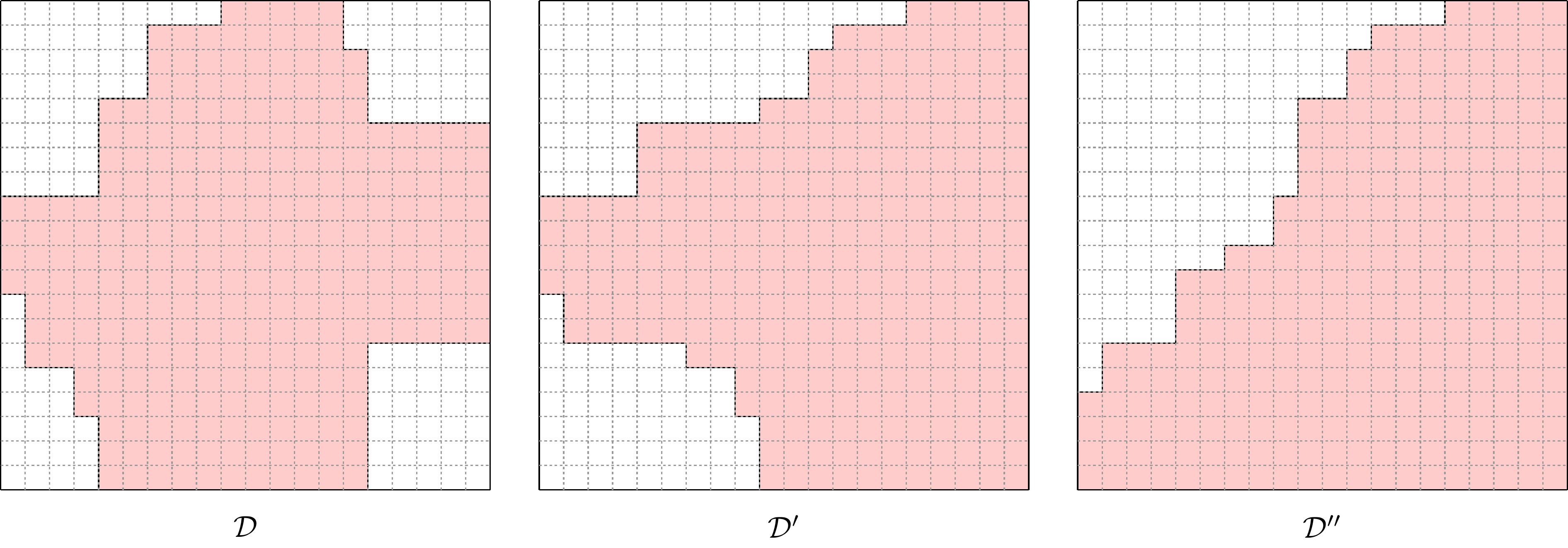} 
    \caption{On the left, an example of convex polyomino $\mathcal{D}$. In the center and on the right, the polyominos $\mathcal{D}'$ and $\mathcal{D}''$ obtained by applying the described procedure to $\mathcal{D}$. }
 \label{fig:convex3}
\end{center}
    \end{figure}
    \begin{figure}[htb]
\begin{center}
    \includegraphics[scale=0.3]{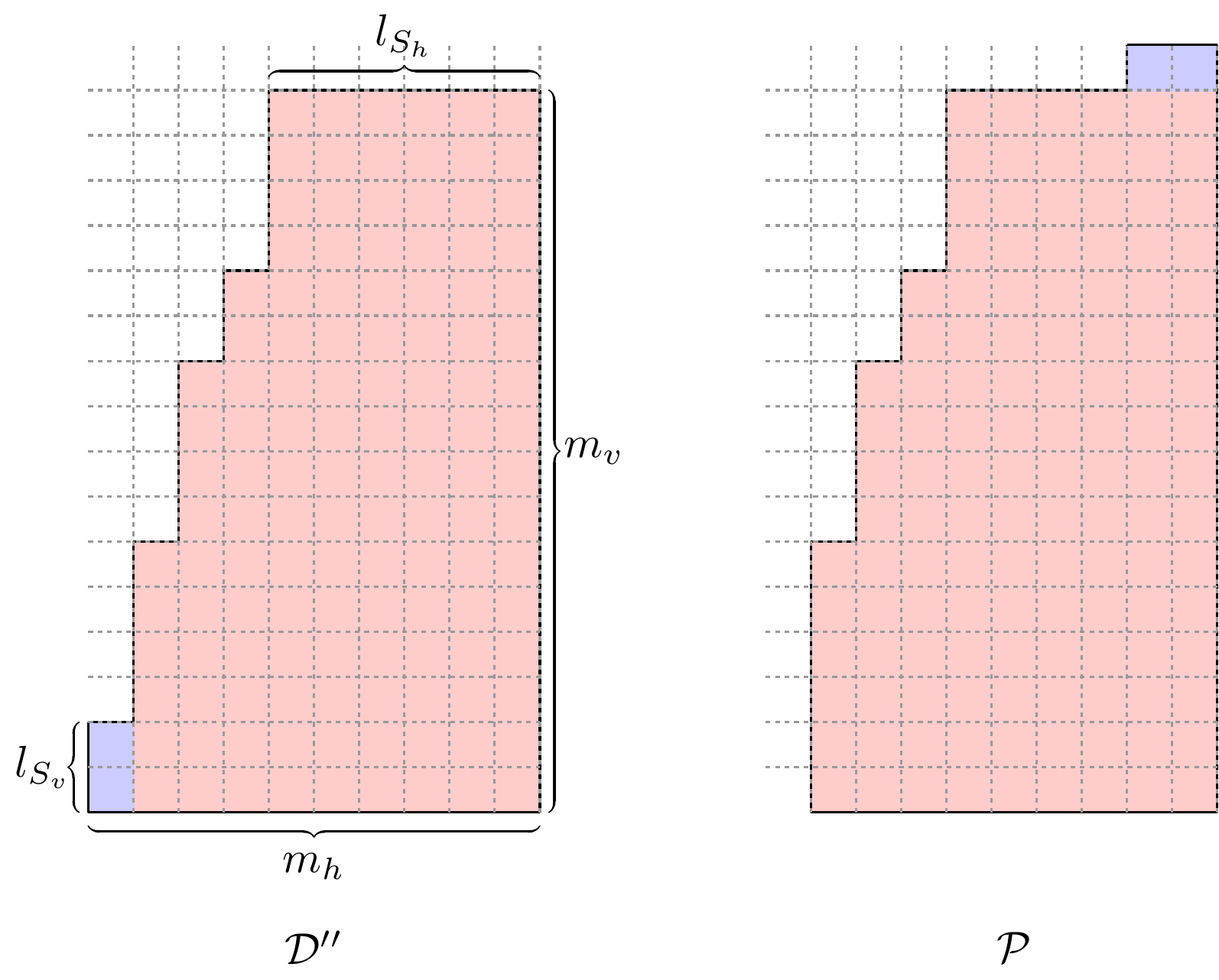} 
    \,\,\,\,\,\,\,\,\,\,\,\,\,
    \includegraphics[scale=0.3]{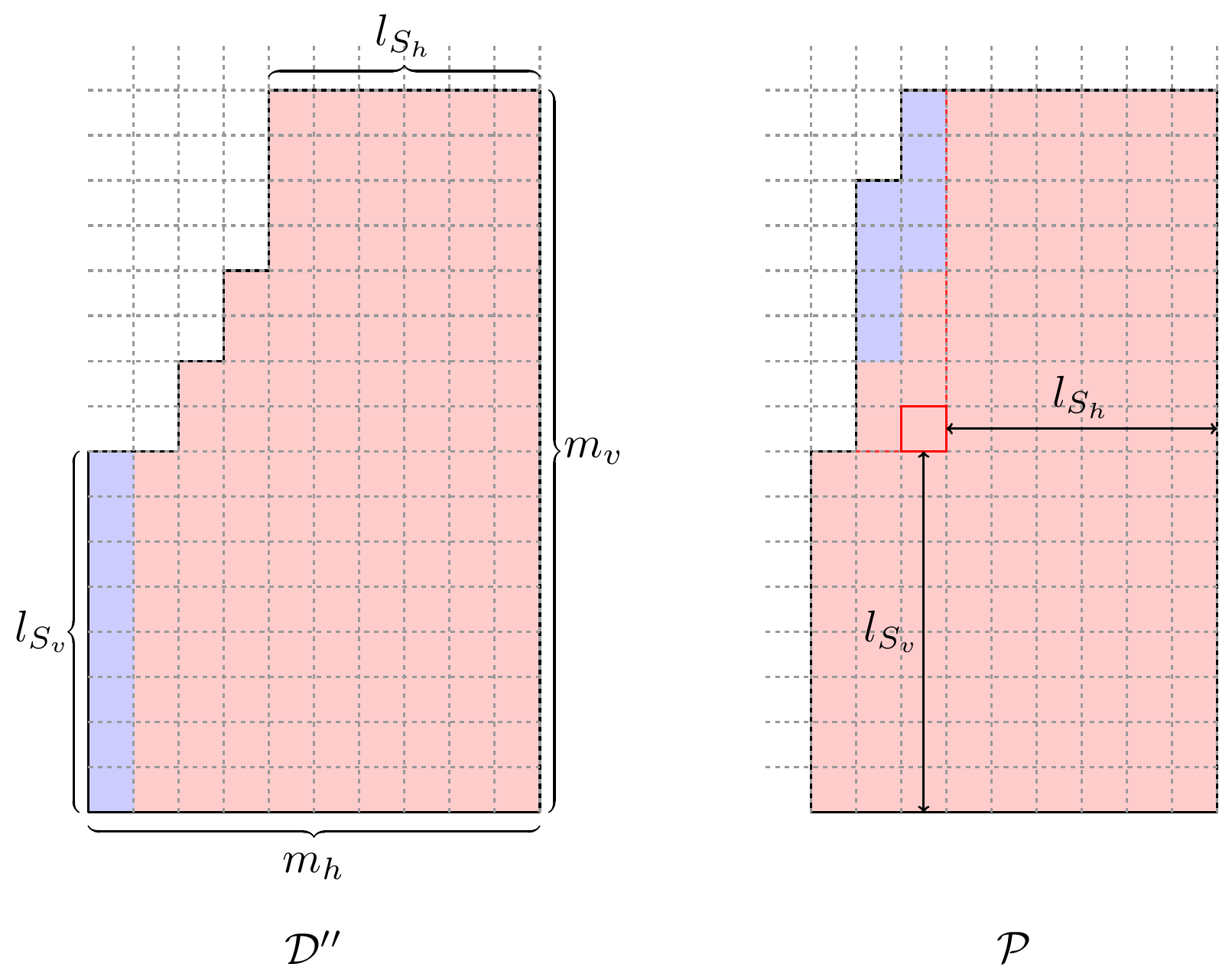} 
    \caption{On the left, an example of a polyomino $\mathcal{D''}$ with $l_{S_v} \leq l_{S_h}$ and the polyomino $\mathcal{P}$ obtained from $\mathcal{D}''$ with smaller nonlocal perimeter. On the right, an example of a polyomino $\mathcal{D''}$ with $l_{S_h} < l_{S_v}$ and the polyomino $\mathcal{P}$ obtained from $\mathcal{D''}$ with smaller perimeter. In the last picture, the red square indicates the position with minimal lengths $l_{S_h}, l_{S_v}$ of the strips where it could be possible to move the unit squares of the vertical strip with length $l_{S_v}$ of $\mathcal{D''}$. }
 \label{fig:convex_i_ii}
\end{center}
    \end{figure}
\item[\texttt{\underline{Step 5}}.] 
Suppose without loss of generality that $m_v \leq m_h$ and let $l_{S_h}, l_{S_v}$ be the lengths of the first horizontal and vertical strips of $\mathcal{D}''$. 
If $l_{S_h} \geq l_{S_v}$ go to Step 6, otherwise go to Step 7.

\item[\texttt{\underline{Step 6}}.] (Construction 1 of a polyomino $\mathcal{P}$ with smaller nonlocal perimeter).
We remove the first vertical strip from $\mathcal{D}''$ and we attach a horizontal strip of length $l_{S_v}$ on the first horizontal strip of $\mathcal{D}''$, see the left figure of Figure \ref{fig:convex_i_ii}. 
In this way, we obtain a polyomino $\mathcal{P}$ belonging to $\mathcal{M}_n$ with smaller nonlocal perimeter than that of $\mathcal{D}$. Indeed, 
\begin{align}
    Per_{\lambda}(\mathcal{P})& = Per_{\lambda}(\mathcal{D})-\sum_{i=1}^{l_{S_v}} \zeta(\lambda,i)- l_{S_v}  \zeta(\lambda, m_v) + \sum_{i=1}^{l_{S_v}} \zeta(\lambda, i) + l_{S_v} \zeta(\lambda, m_h+1) \notag \\
    &\leq Per_{\lambda}(\mathcal{D})- l_{S_v}\sum_{j= m_v}^{m_h} \frac{1}{j^\lambda} \leq Per_{\lambda}(\mathcal{D})- \frac{l_{S_v}}{m_v^\lambda}< Per_{\lambda}(\mathcal{D}).
\end{align}

\item[\texttt{\underline{Step 7}}.] (Construction 2 of a polyomino $\mathcal{P}$ with smaller nonlocal perimeter). We remove the first horizontal strip from $\mathcal{D}''$ and we attach  $l_{S_v}$ unit squares as in the right figure of Figure \ref{fig:convex_i_ii}. In particular, we order the columns according to the distance from the horizontal strip $S_h$ and we fill them starting from the first one until the length of this vertical strip is $m_v$.
In this way, we obtain a polyomino $\mathcal{P}$ belonging to $\mathcal{M}_n$ with smaller nonlocal perimeter than that of $\mathcal{D}$. Indeed
    \begin{align}
    Per_{\lambda}(\mathcal{P})& \leq Per_{\lambda}(\mathcal{D})-\sum_{i=1}^{l_{S_h}} \zeta(\lambda, i) -  l_{S_h} \zeta(\lambda, m_h) + l_{S_h} \zeta(\lambda, l_{S_v}+1) +l_{S_h} \zeta(\lambda, l_{S_h}+1),
\end{align}
where the inequality is obtained by estimating the value of the lengths with the smallest value that there can be, see the red square in the right panel of Figure \ref{fig:convex_i_ii}. Moreover, recalling $l_{S_h} \geq l_{S_v}+1$ and by using \eqref{eq:forA=1}, we obtain 
    \begin{align}
    Per_{\lambda}(\mathcal{P})& 
     \leq Per_{\lambda}(\mathcal{D})- l_{S_h} \zeta(\lambda)+\sum_{j=1}^{l_{S_h}-1} \frac{l_{S_h}-j}{j^\lambda}
    +l_{S_h} \left (
\zeta(\lambda, l_{S_h}+2) + \zeta(\lambda, l_{S_h}+1)     - \zeta(\lambda, m_h) \right ) \notag \\
    & < Per_{\lambda}(\mathcal{D})-\sum_{j=1}^{l_{S_h}} \frac{1}{j^{\lambda-1}}
    +l_{S_h} \zeta(\lambda, l_{S_h}+2) < Per_{\lambda}(\mathcal{D}),
\end{align}
where the last inequality follows by the positivity of $f(x, \lambda)$ defined by
\begin{align}
    f(x, \lambda):=\sum_{j=1}^{x} \frac{1}{j^{\lambda-1}}
    - x \zeta(\lambda, x+2).
\end{align}
Indeed, it is easy to see that $x\mapsto f(x,\lambda)$ is increasing in $x$. For $x=1, \lambda > 1.8$ we have that
\begin{align}
    f(1,\lambda)=1- \zeta(\lambda, 3)=2+\frac{1}{2^{\lambda}} -\zeta(\lambda) >0,
\end{align} 
which proves the claim.
\end{itemize}

\subsection{Proof of Proposition~\ref{prop:rectangle_square}}
\label{proof:prop3.2}

\noindent
Assume $n=l^2=a\cdot b$. The nonlocal perimeter of the square (resp. the rectangle) can be written as sum of the contributions along all vertical and horizontal strips. By Equation \eqref{eq:hor+ver}, we have: 
\begin{align}\label{eq:square_rectangle_GAP}
   &Per_{\lambda}(\mathscr{Q}_l)=4l \sum_{i=1}^l \zeta(\lambda, i)  \qquad \text{ and } \qquad
   Per_{\lambda}(\mathscr{R}_{a,b})=2a \sum_{i=1}^b \zeta(\lambda, i)  +2b \sum_{i=1}^a \zeta(\lambda, i),  
\end{align}
where $2l$ is the number of vertical and horizontal strips of the square, and $a$ (resp. $b$) is the number of the horizontal (resp. vertical) strips of $\mathscr{R}_{a,b}$. 
By Equations \eqref{eq:forA=1} and \eqref{eq:square_rectangle_GAP}, we get:
\begin{align}
  &\frac{Per_{\lambda}(\mathscr{Q}_l)}{2}=2l^2\zeta(\lambda) - 2l\sum_{k=1}^{l-1} \frac{l-k}{k^\lambda}, \label{eq:semiper_square}\\
  &\frac{Per_{\lambda}(\mathscr{R}_{a,b})}{2}=2ab\zeta(\lambda) - a\sum_{k=1}^{b-1} \frac{b-k}{k^\lambda} - b\sum_{k=1}^{a-1} \frac{a-k}{k^\lambda}. \label{eq:semiper_rectangle}
\end{align}
Thus, recalling $l^2=ab$, we obtain:
\begin{align}
\Delta := \frac{Per_{\lambda}(\mathscr{R}_{a,b})-Per_{\lambda}(\mathscr{Q}_l)}{2}&=2l \sum_{k=1}^{l-1} \frac{l-k}{k^{\lambda}} -a \sum_{k=1}^{b-1} \frac{b-k}{k^{\lambda}} -b \sum_{k=1}^{a-1} \frac{a-k}{k^{\lambda}}. \label{eq:GAP_difference_QRAB} 
\end{align}
We will first treat the cases $l=2,3$. Note that for $l=2$ (resp. $l=3$) we necessarily have that $a=1$ and $b=4$ (resp. $b=9$). Let first consider $l=2$, then for $\lambda > 1.8$ we have:
\[
\Delta = 4 - \left( 3 + \frac{2}{2^{\lambda}} + \frac{1}{3^{\lambda}}\right) = 1-\frac{1}{2^{\lambda-1}} -\frac{1}{3^{\lambda-1}} > 0.
\]
When $l=3$ we have that for any $\lambda \geq 1.3$:
\[
\Delta = 6 \left (2+\frac{1}{2^{\lambda}} \right ) - \left ( 8 + \frac{7}{2^{\lambda}} + \frac{6}{3^{\lambda}} + \ldots + \frac{1}{8^{\lambda}}\right )>0.
\]

\medskip

In the remaining cases $l\geq 4$, we can write the difference $\Delta$ as the sum of two functions $F_1(a,l)$ and $F_2(a,l)$, i.e.: 
\begin{equation}\label{eq:delta}
\Delta = F_1(a,l) + F_2(a,l,b),
\end{equation}
where
\begin{align}
&F_1(a,l)=\Big (a+\frac{l^2}{a}-2l \Big )\sum_{k=1}^{a-1} \frac{1}{k^{\lambda-1}}-2(l-a)\sum_{k=a}^{l-1} \frac{1}{k^{\lambda-1}} +\frac{l^2-a^2}{a^\lambda}, \label{def:F1}\\
&F_2(a,l,b)=\sum_{k=a+1}^{l-1} \frac{l^2-ak}{k^{\lambda}}-\sum_{k=l}^{b-1}\frac{l^2-ak}{k^{\lambda}} .\label{def:F2}
\end{align}
We then conclude the claim once can we show that one function is strictly positive and one non-negative.

\noindent
\textbf{Step 1:}
We will first show that $l \mapsto F_1(a,l)$ is increasing in $l$ and in particular that $F_1(a,l)>0$. We write:
\begin{equation}\label{eq:rest}
\begin{split}
F_1(a,l+1) &=  F_1(a,l) + \left (\frac{2(l-a)+1}{a} \right ) \sum_{k=1}^{a-1} \frac{1}{k^{\lambda-1}} +\frac{2l+1}{a^{\lambda}}-\frac{2(l-a)}{l^{\lambda-1}} -2\sum_{k=a}^l \frac{1}{k^{\lambda-1}} \\
& := F_1(a,l) + Rest(a,l).
\end{split}
\end{equation}
The function is increasing in $l$, if the rest-term $Rest(a,l)>0$ for all $l \geq a + \sqrt{a} =: a^*$. The lower bound on $l$ can be deduced from the following argument. Suppose by contradiction that $l < \lfloor a^* \rfloor$. We set $l=a+k$ with $k \in \mathbb{N}$ such that $k<\sqrt{a}$. By assumption, we have $a, \, b, \, l \in \mathbb{N}$ such that $ab=l^2$ and $a+b=2l +\alpha$ with $\alpha \in \mathbb{N}$ by \cite{alonso1996three}. Thus, we obtain
$(a+k)^2=a(2(a+k)+\alpha-a)$, i.e., $k^2=\alpha a$ and this is a contradiction since indeed $a>k^2=\alpha a \geq a$. 

We compute further
\begin{equation*}
\begin{split}
Rest(a,l+1) -Rest(a,l) = \frac{2}{a}\sum_{k=1}^{a}\frac{1}{k^{\lambda-1}}+\frac{2(l-a)}{l^{\lambda-1}}-\frac{2(l-a)}{(l+1)^{\lambda-1}} - \frac{4}{(l+1)^{\lambda-1}}.
\end{split}
\end{equation*}
Moreover, we note that 
\begin{equation}\label{eq:exp1}
\begin{split}
\frac{2}{a}\sum_{k=1}^{a}\frac{1}{k^{\lambda-1}}+\frac{2(l-a)}{l^{\lambda-1}}-\frac{2(l-a)}{(l+1)^{\lambda-1}} - \frac{4}{(l+1)^{\lambda-1}} & > \frac{2}{a}\sum_{k=1}^{a}\frac{1}{k^{\lambda-1}} - \frac{4}{(l+1)^{\lambda-1}}.
\end{split}
\end{equation}
Let $a\in \{1,2,3,4\}$ we can prove by hand that the last expression is positive for all $\lambda \geq 1.8$ using $l \geq a+\sqrt{a}$. 
Then, suppose that $a\geq 5$. It follows that $l \geq a+\sqrt{a} \geq  a+2$. Thus, we can bound the r.h.s. of Equation \eqref{eq:exp1} by
\begin{equation*}
\begin{split}
\frac{2}{a}\sum_{k=1}^{a}\frac{1}{k^{\lambda-1}} - \frac{4}{(l+1)^{\lambda-1}} &>  \frac{2}{a}
\int_1^{a+1} \frac{1}{x^{\lambda-1}} dx - \frac{4}{(a + 3)^{\lambda-1}}
= \frac{2}{a(2-\lambda)}((a+1)^{2-\lambda} -1) - \frac{4}{(a + 3)^{\lambda-1}}.
\end{split}
\end{equation*}
Note that, assuming first that  $1.8 \leq \lambda < 2$, we can bound
\begin{equation}
((a+1)^{2-\lambda}-1)(a+ 3)^{\lambda-1} > 2a(2-\lambda).
\end{equation}
The r.h.s. and the l.h.s. of the inequality are increasing functions in $a$ and the slope of the l.h.s. is bigger than the slope of the r.h.s. The case $\lambda \geq 2$ follows from similar considerations. We have proven that $Rest(a,l)$ is an increasing function of $l$ for each fixed $a$ and $l \geq a + 3$. It remains to show that $Rest(a, a + 3) > 0$. We have that:

\begin{equation*}
\begin{split}
Rest(a, a + 3) &= \frac{7}{a} \sum_{k=1}^{a-1} \frac{1}{k^{\lambda-1}} +\frac{2a + 7}{a^{\lambda}}-\frac{6}{(a + 3)^{\lambda-1}} -2\sum_{k=a}^{a + 3} \frac{1}{k^{\lambda-1}} \\
& = \frac{7}{a} \sum_{k=1}^{a-1} \frac{1}{k^{\lambda-1}}+\frac{7}{a^{\lambda}} -\frac{2}{(a+1)^{\lambda}} -\frac{2}{(a+2)^{\lambda}}-\frac{2a}{(a+1)^{\lambda}}-\frac{2a}{(a+2)^{\lambda}} - \frac{8a}{(a+3)^{\lambda}} -\frac{24}{(a+3)^{\lambda}},
\end{split}
\end{equation*}
which is greater than
\begin{equation*}
\frac{7}{a} \sum_{k=1}^{a-1} \frac{1}{k^{\lambda-1}} - \frac{14 a}{(a+1)^{\lambda}} -\frac{21}{(a+3)^{\lambda}}.
\end{equation*}
Note that trivially $\frac{7}{a} > \frac{21}{(a+3)^{\lambda}}$, it remains to show that 
\begin{equation} \label{eq:LBF1}
\sum_{k=2}^{a-1} \frac{1}{k^{\lambda-1}} - \frac{2 a^2}{(a+1)^{\lambda}} > 0,
\end{equation}
which is equivalent to showing (for $\lambda < 2$)

\begin{equation*}
(a+1)^{\lambda}(a^{2-\lambda} -2^{2-\lambda}) > 2(2-\lambda) a^2,
\end{equation*}
or
\begin{equation*}
\left( \frac{2}{a} \right ) ^{2-\lambda} \geq 1-\left(\frac{a}{a+1} \right)^{\lambda} 2 (2-\lambda) 
\end{equation*}
which is true for $a\geq 4$ and $1.8 \leq \lambda < 2$. The other case for $\lambda \geq 2$ follows in a similar manner. We have proven that $F_1(a,l)$ is an increasing function in $l$ for all $l\geq a+2$. Let us prove that $F_1(a,l)>0$ by showing that $F_1(a,a+2)>0$. First, we compute by hand $F_1(1,3)$. 
\begin{align*}
    F_1(1,3) & = 4 - \frac{8}{2^{\lambda}}>0,
\end{align*}
since $\lambda>1.8$.
Next, considering first $\lambda<2$, we have that:

\begin{equation*}
\begin{split}
F_1(a,a+2) & = \frac{4}{a} \sum_{k=1}^{a-1}\frac{1}{k^{\lambda-1}} + \frac{4}{a^{\lambda}} - \frac{4}{(a+1)^{\lambda-1}} 
\geq  \frac{4}{a} \sum_{k=1}^{a-1}\frac{1}{k^{\lambda-1}} - \frac{4}{a^{\lambda-1}} 
\geq \frac{4}{a} \int_{1}^{a}\frac{1}{k^{\lambda-1}} dk+\frac{4}{a^{\lambda}}-\frac{4}{a^{\lambda-1}} \notag \\
&=\frac{4}{(2-\lambda)a} (a^{2-\lambda}-1)- \frac{4(a-1)}{a^{\lambda}} 
=\frac{4}{a} \left (\frac{a^{2-\lambda}-1}{2-\lambda} - \frac{a-1}{a^{\lambda-1}} \right ).
\end{split}
\end{equation*}
Thus, it remains to show that
\begin{align*}
    \frac{a^{2-\lambda}-1}{2-\lambda} - \frac{a-1}{a^{\lambda-1}}>0.
\end{align*}
Assuming $1.8 \leq \lambda < 2$, it is equivalent to require
\begin{align*}
  1- a^{\lambda-2}  > \left (1-\frac{1}{a} \right) (2-\lambda),
\end{align*}
which is true for $a\geq 2$. The other case $\lambda \geq 2$ follows in a similar way.

\noindent
\textbf{Step 2:} In a second step, we will prove that $a\mapsto F_2(a,l,\lfloor l^2/a\rfloor)$ is decreasing in $a$ and in particular that $F_2(a,l,\lfloor l^2/a\rfloor)\geq 0$. First note that
\begin{equation*}
F_2(a,l,\lfloor l^2/a \rfloor) \geq F^{Int}_2(a,l) :=  \int_{a}^{l-1} \frac{l^2-ak}{k^{\lambda}} d k -\int_{l}^{\lfloor l^2/a \rfloor} \frac{l^2-ak}{k^{\lambda}} d k 
\geq \int_{a}^{l-1} \frac{l^2-ak}{k^{\lambda}} d k -\int_{l}^{l^2/a } \frac{l^2-ak}{k^{\lambda}} d k. 
\end{equation*}
The map $a\mapsto \frac{l^2-ak}{k^{\lambda}}$ is decreasing in $a$. Moreover, if $a$ increases, then the area of integration shrinks faster in $a$ than the area of integration of the second integral $l/a(l-a)$. Therefore, the first term is dominant and the function is decreasing in $a$. Since $a\leq l-3$, it remains to show that $F^{Int}_2(l-3,l)\geq 0$. Note that $F^{Int}_2(l-3,l)$ is decreasing in $l$ by the same arguments as before. Taking $l$ to infinity we obtain that $F_2^{Int}(l-3,l)$ converges to 0 from above.

\subsection{Proof of Proposition \ref{prop:rectangle_strip_attached1}}
\label{prop:3.4}
First we note that, by using Proposition \ref{prop:no_rectangle}, we have that 
$$\text{argmin}_{\mathcal{P} \in \mathcal{M}_{n}} \{Per_{\lambda}(\mathcal{P}) \}\subset \text{argmin}_{\mathcal{P} \in \mathscr{M}_n^{\text{ext}}} \{Per_{\lambda}(\mathcal{P}) \}.$$ We distinguish two cases: in Case (i) we suppose that the square has no protuberance, i.e. $k_1=0$, instead in Case (ii) we assume $k_1 \neq 0$. 
\paragraph{Case (i).} Let $k_1=0$. We will prove that for $n=l^2$ the nonlocal perimeter of a square is smaller than the nonlocal perimeter of a rectangle with a $k_2$-protuberance and the same area $n = ab+k_2$. 
Suppose first that the protuberance is attached along the shorter side. Thus, $k_2 \leq a-1$.
By using Equations \eqref{eq:semiper_square} and \eqref{eq:semiper_rectangle}, we have:
\begin{equation*}
 Per_{\lambda}(\mathscr{R}^{k_2}_{a,b})- Per_{\lambda}(\mathscr{Q}_l)= (Per_{\lambda}(\mathscr{R}_{a,b})- Per_{\lambda}(\mathscr{Q}_l)) + 2{k_2} \zeta(\lambda, b+1) + 2\sum_{i=1}^{k_2} \zeta(\lambda, i).
\end{equation*}
From Equations \eqref{eq:GAP_difference_QRAB}, \eqref{def:F1} and \eqref{def:F2}, we obtain that the difference can be rewritten as:
\begin{equation*}
\begin{split}
 Per_{\lambda}(\mathscr{R}^{k_2}_{a,b})- Per_{\lambda}(\mathscr{Q}_l) &= 2F_1(a,l)+2 F_2(a,l,b)+\frac{2{k_2}}{a}\sum_{r=1}^{a-1}\frac{a-r}{r^{\lambda}}- 2\sum_{r=1}^{k_2-1}\frac{k_2-r}{r^\lambda}-\frac{2k_2}{b^\lambda}.
\end{split}
\end{equation*}

From the proof of Proposition \ref{prop:rectangle_square} we have $F_1(a,l)>0$. Moreover, we can prove that $F_2(a,l,b)\geq0$ with $b= \lfloor (l^2-k_2)/a \rfloor$ by arguing as in Step 2 of Proposition \ref{prop:rectangle_square}. Thus, we obtain
\begin{align}\label{eq:prop3.4.1}
 Per_{\lambda}(\mathscr{R}^{k_2}_{a,b})- Per_{\lambda}(\mathscr{Q}_l) & \geq  \underbrace{2F_1(a,l)+ 2F_2(a,l, \lfloor (l^2-k_2)/a \rfloor)}_{>0}
 +\underbrace{\frac{2{k_2}}{a}\sum_{r=1}^{a-1}\frac{a-r}{r^{\lambda}}- 2\sum_{r=1}^{k_2-1}\frac{k_2-r}{r^\lambda}-\frac{2k_2}{b^\lambda}}_{(*)}.
\end{align}
In order to show that $(*)>0$, note that the function
\begin{align*}
    x \to \frac{c}{x} \sum_{r=2}^{x-1} \frac{x-r}{r^\lambda}
\end{align*}
is increasing in $x$, where $c >0$ is some constant. Then, recalling $k_2 \leq a-1$ we obtain
\begin{align*}
    (*) \geq \frac{2}{k_2-1} \sum_{r=1}^{k_2-1}\frac{k_2-r}{r^\lambda}- \frac{2k_2}{b^\lambda} \geq 2- \frac{2k_2}{b^\lambda}>0,
\end{align*}
where the last inequality follows from $\lambda>1$.

Next, assume that the protuberance is attached along the longer side, thus $k_2 \leq b-1$ and call the corresponding polyomino $\overline{\mathscr{R}}^{k_2}_{a,b}$. Observe that, if $k_2 \leq a$, then the rectangle $\mathscr{R}^{k_2}_{a,b}$ with the protuberance $k_2$ attached along the shorter side has smaller nonlocal perimeter than $\overline{\mathscr{R}}^{k_2}_{a,b}$. Indeed,  we have
\begin{align*}
    Per_{\lambda}(\overline{\mathscr{R}}^{k_2}_{a,b})- Per_{\lambda}(\mathscr{R}^{k_2}_{a,b})= 2k_2 \left ( \zeta(\lambda, a+1) - \zeta(\lambda, b+1)\right ) = 2k_2 \sum_{r=a+1}^{b} \frac{1}{r^\lambda} >0,
\end{align*}
hence we conclude in this case
\[
    Per_{\lambda}(\overline{\mathscr{R}}^{k_2}_{a,b})- Per_{\lambda}(\mathscr{Q}_l) = \left ( Per_{\lambda}(\overline{\mathscr{R}}^{k_2}_{a,b})- Per_{\lambda}(\mathscr{R}^{k_2}_{a,b})\right ) + Per_{\lambda}(\mathscr{R}^{k_2}_{a,b}) - Per_{\lambda}(\mathscr{Q}_l) > 0.
\]
Suppose now that $a \leq k_2 \leq b-1$. By analogous arguments as in the bound in Equation \eqref{eq:prop3.4.1} and using Equations \eqref{eq:semiper_square} and \eqref{eq:semiper_rectangle}, we can again write 
\begin{equation*}
\begin{split}
 Per_{\lambda}(\overline{\mathscr{R}}^{k_2}_{a,b})- Per_{\lambda}(\mathscr{Q}_l) &= (Per_{\lambda}(\mathscr{R}_{a,b})- Per_{\lambda}(\mathscr{Q}_l)) + 2{k_2} \zeta(\lambda, a+1) +2\sum_{i=1}^{k_2}\zeta(\lambda, i) \\
 & = Per_{\lambda}(\mathscr{R}^{k_2}_{a,b})- Per_{\lambda}(\mathscr{Q}_l) + 2k_2 \sum_{r=a+1}^{b} \frac{1}{r^\lambda} >0.
\end{split}
\end{equation*}

Finally we conclude that for the first case, when $n=l^2$,
 $\text{argmin}_{\mathcal{P} \in \mathcal{M}_{n}} \{Per_{\lambda}(\mathcal{P}) \}= \{ \mathscr{Q}_l \}$.

\paragraph{Cases (ii).} Let $k_1 \neq 0$ and consider a square with a $k_1$-protuberance attached along one of the four sides $\mathscr{Q}_l^{k_1}$ and area $n=l^2+k_1$.
We observe that the classical perimeter of $\mathscr{Q}_l^{k_1}$ is $per(\mathscr{Q}_l^{k_1})=2(2l+1)$ and we will prove the following two results.
\begin{itemize}
\item[(ii.1)] Let $0 \leq k_2 \leq b-1$ and consider $\mathscr{R}_{a,b}^{k_2} \in \mathcal{M}_n$ with classical perimeter greater than the perimeter of $\mathscr{Q}_l^{k_1}$, i.e. $2(a+b+1)=2(2l+1+C)$ for some $C \geq 1$, $C \in \mathbb{N}$. Then we have that,
\[
Per_{\lambda}(\mathscr{R}_{a,b}^{k_2}) > Per_{\lambda} (\mathscr{Q}_l^{k_1}).
\]

\item[(ii.2)] Let $0 \leq k_2 \leq b-1$ and consider a rectangle $\overline{\mathscr{R}}_{a,b}^{k_2} \in \mathcal{M}_n$ with a protuberance attached along the longer side and classical perimeter greater than the perimeter of $\mathscr{Q}_l^{k_1}$, i.e. $2(a+b+1)=2(2l+1+C)$ for some $C \geq 1$, $C \in \mathbb{N}$. Then we have that,
\[
Per_{\lambda}(\overline{\mathscr{R}}_{a,b}^{k_2}) > Per_{\lambda} (\mathscr{Q}_l^{k_1}).
\]
\end{itemize}
Recall that for general $a,b,k \in \mathbb{N}$,
\begin{equation}\label{eq:rect_prot}
 Per_{\lambda}(\mathscr{R}_{a,b}^{k}) = 2a \sum_{i=1}^{b} \zeta(\lambda, i) + 2b \sum_{i=1}^a \zeta(\lambda, i) + 2k\zeta(\lambda, b+1) + 2\sum_{i=1}^{k} \zeta(\lambda, i).
\end{equation}
\subparagraph{Case (ii.1).} Suppose that $a<b$ and $k_2 \geq 0$. 

 The difference of perimeters can be written as
\begin{align}\label{eq:difference_square_rectangle_proof_k2}
   & Per_{\lambda}(\mathscr{R}_{a,b}^{k_2})-Per_{\lambda}(\mathscr{Q}_l^{k_1})\nonumber \\
    =&4ab\zeta(\lambda) - 2a\sum_{r=1}^{b-1} \frac{b-r}{r^\lambda} - 2b\sum_{r=1}^{a-1} \frac{a-r}{r^\lambda}
    +2\sum_{i=1}^{k_2} \zeta(\lambda, i) +2k_2 \zeta(\lambda, b+1) -4l^2\zeta(\lambda) 
    + 4l\sum_{r=1}^{l-1} \frac{l-r}{r^\lambda}   -2\sum_{i=1}^{k_1}\zeta(\lambda, i)\\
    &-2k_1 \zeta(\lambda, l+1). \nonumber
\end{align}

We will treat two cases: $k_1 - k_2 \leq a$ resp. $k_1-k_2>a$.

\begin{itemize}
    \item[(ii.1.a)] Suppose $k_1-k_2 \leq a$. We will prove that this condition is equivalent to require $l \geq a+\sqrt{a}$. Indeed, assume first $k_1-k_2 \leq 0$. 
    Without loss of generality suppose that $a<b$ and we write $a=l-\alpha$ and $b=l+\beta$ for some $\alpha, \beta \in \mathbb{N}$.  Thus, for some positive integer $C \geq 1$, we have the following relations between the area and the classical perimeter
   \begin{align}\label{condition_k1_k2}
         \begin{cases}
             a+b+1=2l+1+C  \\
             ab+k_2=l^2+k_1
         \end{cases}
         \iff 
         \begin{cases}
             \beta=\alpha+C  \\
             (l-\alpha)(l+\alpha+C)+k_2=l^2+k_1
         \end{cases}
         \iff 
         \begin{cases}
             \beta=\alpha+C \\
             -\alpha^2+C(l-\alpha)=k_1-k_2.
         \end{cases}
    \end{align}
    
In this case, we have
\begin{align}\label{lsqrta}
    a \geq k_1-k_2 \iff a \geq -\alpha^2+C(l-\alpha) \iff (l-a)^2 \geq (C-1) a \iff l \geq a+ \sqrt{(C-1)a}.
\end{align}
Moreover, we observe that the case $C=1$ cannot be achieved  assuming $k_1 \geq k_2$. Indeed, let us assume $C=1$, by \eqref{condition_k1_k2} we obtain $k_1-k_2=-\alpha^2 < 0$, since $\alpha \geq 1$. Then $k_1 < k_2$, and this is a contradiction since by assumption we have $k_1 \geq k_2$. Thus, by \eqref{lsqrta}, $l \geq a+\sqrt{a}$.

Using \eqref{eq:difference_square_rectangle_proof_k2}, we will write for the difference of nonlocal perimeters
    \begin{align*}
        Per_{\lambda}(\mathscr{R}_{a,b}^{k_2})&-Per_{\lambda}(\mathscr{Q}_l^{k_1}) \\
       &=2 \tilde{F}_1(a,l, k_1)+ 2 \left ( \sum_{r=a+1}^{l-1} \frac{l^2-ar}{r^\lambda} - \sum_{r=l}^{b-1} \frac{l^2+k_1-ar}{r^\lambda} \right )
        +\frac{k_2}{a} \sum_{r=1}^{a-1} \frac{a-r}{r^\lambda} 
    - \frac{k_2}{b^\lambda}-\sum_{r=1}^{k_2-1} \frac{k_2-r}{r^\lambda} \notag \\
    &\geq 2 \tilde{F}_1(a,l, k_1)+ 2\tilde{F}_2(a,l,k_1) + \underbrace{\frac{k_2}{a} \sum_{r=1}^{a-1} \frac{a-r}{r^\lambda} 
    - \frac{k_2}{b^\lambda}-\sum_{r=1}^{k_2-1} \frac{k_2-r}{r^\lambda}}_{\textnormal{(Eq.1.a)}}.
    \end{align*}
    where
    \begin{align}
    & \tilde{F}_1(a,l,k_1)= 
    F_1(a,l)-\frac{k_1}{a} \sum_{r=1}^{a-1} \frac{a-r}{r^\lambda} 
    + \frac{k_1}{l^\lambda}+\sum_{r=1}^{k_1-1} \frac{k_1-r}{r^\lambda}, \label{def:F_tilde1} \\
& \tilde{F}_2(a,l,k_1)= \sum_{r=a+1}^{l-1} \frac{l^2-ar}{r^{\lambda}}-\sum_{r=l}^{\lfloor\frac{l^2+k_1}{a}-1\rfloor} \frac{l^2+k_1-ar}{r^{\lambda}}. \label{def:F_tilde2}
    \end{align}

Indeed, it will be easy to see that (Eq.1.a) $>0$, since first of all the function 
\begin{align}\label{function_k2_part}
    x \to \sum_{r=3}^{\frac{k_2}{x}-1}\frac{k_2-rx}{r^\lambda}
\end{align}
is decreasing for $\lambda>1$. Thus, since $1 \leq k_2 <a$, we obtain
\begin{align}\label{positivity_lastparte_k2}
 (\textnormal{Eq.1.a} )
& \geq - \frac{k_2}{b^\lambda}-\frac{k_2}{a}+1- \frac{k_2}{2^{\lambda-1} a}+\frac{1}{2^{\lambda-1}} \nonumber\\
& \geq 1+\frac{1}{2^{\lambda-1}} - k_2 \left ( \frac{1}{k_2+1} + \frac{1}{(k_2+1)^\lambda} + \frac{1}{2^{\lambda-1} (k_2+1)} \right ) \notag \\
& \geq 1+\frac{1}{2^{\lambda-1}} - \left ( \frac{1}{2} + \frac{1}{2^\lambda} + \frac{1}{2^{\lambda}} \right )=\frac{1}{2}>0,
\end{align}
where we used $a,b \geq k_2+1$.
It remains to show that
\[
\tilde{F}_1(a,l, k_1)+  \tilde{F}_2(a,l,k_1) > 0.
\]
 First, we will show that $\tilde{F}_2(a,l,k_1)>0$ by proving that the function $k_1\mapsto \tilde{F}_2(a,l,k_1)$ is decreasing in $k_1$ and evaluating $\tilde{F}_2(a,l,l-1)$. Trivially, we can write
\begin{align}\label{F2tilde_decreasing}
    \tilde{F}_2(a,l,k_1+1)-\tilde{F}_2(a,l,k_1)&=
    \sum_{r=l}^{\lfloor\frac{l^2+k_1}{a}-1\rfloor} \frac{l^2+k_1-ar}{r^{\lambda}}
    -\sum_{r=l}^{\lfloor\frac{l^2+k_1+1}{a}-1\rfloor} \frac{l^2+k_1+1-ar}{r^{\lambda}} \notag \\
    &=-\sum_{r=\lfloor\frac{l^2+k_1}{a}\rfloor}^{\lfloor\frac{l^2+k_1+1}{a}-1\rfloor} \frac{l^2+k_1-ar}{r^{\lambda}}
    -\sum_{r=l}^{\lfloor\frac{l^2+k_1+1}{a}-1\rfloor} \frac{k_1}{r^{\lambda}} <0.
\end{align}
Then, we can consider the function $\tilde{F}_2(a,l,l-1)$, i.e., 
\begin{align}\label{F2tilde_positive}
   \tilde{F}_2(a,l,l-1) &= \sum_{r=a+1}^{l-1} \frac{l^2-ar}{r^{\lambda}}-\sum_{r=l}^{\lfloor\frac{l^2+l-1}{a}-1\rfloor} \frac{l^2+l-1-ar}{r^{\lambda}}
   =\sum_{r=a+1}^{l-1} \frac{l^2-ar}{r^{\lambda}}-\sum_{r=l-\lfloor\frac{l-1}{a}\rfloor}^{\lfloor\frac{l^2}{a}-1\rfloor} \frac{l^2-ar}{(r+\frac{l-1}{a})^{\lambda}} \notag \\
   & \geq \sum_{r=a+1}^{l-1} \frac{l^2-ar}{r^{\lambda}}-\sum_{r=l-\lfloor\frac{l-1}{a}\rfloor}^{\lfloor\frac{l^2}{a}-1\rfloor} \frac{l^2-ar}{r^{\lambda}} >0
\end{align}
where the last inequality follows from arguing as in the Step 2 of the proof of Proposition \ref{prop:rectangle_square}.

We will conclude the proof of the Case (ii.1.a) 
by showing that $\tilde{F}_1(a,l, k_1) >0$.
First, we  show that $l \mapsto \tilde{F}_1(a,l, k_1)$ is increasing in $l$. We write
\begin{equation*}
\begin{split}
\tilde{F}_1(a,l+1, k_1) &=  F_1(a,l+1) + \frac{k_1}{(l+1)^\lambda}-\frac{k_1}{a} \sum_{r=1}^{a-1} \frac{a-r}{r^\lambda} 
   +\sum_{r=1}^{k_1-1} \frac{k_1-r}{r^\lambda} \\
& = F_1(a,l) + Rest(a,l)  + \frac{k_1}{(l+1)^\lambda}-\frac{k_1}{a} \sum_{r=1}^{a-1} \frac{a-r}{r^\lambda} 
   +\sum_{r=1}^{k_1-1} \frac{k_1-r}{r^\lambda} \\
& =  \tilde{F}_1(a,l, k_1) - \frac{k_1}{l^\lambda}+ \frac{k_1}{(l+1)^\lambda} + Rest(a,l).
\end{split}
\end{equation*}
Let us call
\[
\overline{Rest}(a,l) = Rest(a,l)  - \frac{k_1}{l^\lambda}+ \frac{k_1}{(l+1)^\lambda}
\]
where $Rest(a,l)$ is defined in \eqref{eq:rest} and show that $\overline{Rest}(a,l)>0$. The function $l \mapsto \overline{Rest}(a,l)$ is increasing in $l$, if the term $\overline{Rest}(a,l)>0$ for all $l \geq a+\sqrt{a}$. Indeed,
\begin{align}
    \overline{Rest}(a,l+1)-\overline{Rest}(a,l) &=Rest(a,l+1)-Rest(a,l)- \frac{k_1}{(l+1)^\lambda}+ \frac{k_1}{(l+2)^\lambda} + \frac{k_1}{l^\lambda}- \frac{k_1}{(l+1)^\lambda} \notag \\
    & \geq k_1 \left ( \frac{1}{l^\lambda} + \frac{1}{(l+2)^\lambda}-\frac{2}{(l+1)^\lambda} \right )>0,
\end{align}
where we obtain the first inequality as in the proof of Proposition \ref{prop:rectangle_square}, since by assumption $l \geq a+\sqrt{a}$. 
    \item[(ii.1.b)] Assume $k_1-k_2 > a$. 
     We observe that if $k_2=0$, then $\mathscr{R}_{a,b}^{k_2}=\mathscr{R}_{a,b}$ and its perimeter is $2(a+b)$. Thus, for some positive integer $C \geq 2$, we have

\begin{align}\label{eq:condition_alpha_beta_k}
        \begin{cases}
            a+b=2l+1+C \\
            ab=l^2+k_1
        \end{cases}
        \iff 
       \begin{cases}
            \beta=\alpha+1+C \\
            (l-\alpha)(l+\alpha+1+C)=l^2+k_1
        \end{cases}
        \iff 
        \begin{cases}
            \beta=\alpha+1+C \\
            -\alpha^2+(1+C)(l-\alpha)=k_1.
        \end{cases}
   \end{align}

By using \eqref{eq:condition_alpha_beta_k} and \eqref{eq:difference_square_rectangle_proof_k2} with $k_2=0$, we obtain
\begin{align}\label{eq:difference_square_rectangle_proof_k}
   Per_{\lambda}(\mathscr{R}_{a,b})&-Per_{\lambda}(\mathscr{Q}_l^{k_1}) \nonumber\\
    =&4ab\zeta(\lambda) - 2a\sum_{r=1}^{b-1} \frac{b-r}{r^\lambda} - 2b\sum_{r=1}^{a-1} \frac{a-r}{r^\lambda}
    -4l^2\zeta(\lambda) + 4l\sum_{r=1}^{l-1} \frac{l-r}{r^\lambda}-2\sum_{i=1}^{k_1}\zeta(\lambda, i)-2k_1 \zeta(\lambda, l+1) \notag \\
    =& 4 \left (-\alpha^2+(1+C)(l-\alpha) \right )\zeta(\lambda) - 2(l-\alpha) \sum_{r=1}^{l+\alpha+C} \frac{l+\alpha+C+1-r}{r^\lambda} \notag \\
    &- 2(l+\alpha+C+1) \sum_{r=1}^{l-\alpha-1} \frac{l-\alpha-r}{r^\lambda}
    + 4l\sum_{r=1}^{l-1} \frac{l-r}{r^\lambda} -2\sum_{i=1}^{-\alpha^2+(1+C)(l-\alpha)}\sum_{r=1}^{\infty} \frac{1}{r^\lambda} \notag \\
    &-2(-\alpha^2+(1+C)(l-\alpha)) \zeta(\lambda, l+1)=2\Delta(l,\alpha,C,\lambda),
\end{align}
where
\begin{align}\label{def_DeltalCalpha}
&\Delta(l,\alpha,C,\lambda):= \nonumber\\
& \left (-\alpha^2+(1+C)(l-\alpha) \right ) \sum_{r=1}^{l} \frac{1}{r^\lambda} - (l-\alpha) \sum_{r=1}^{l+\alpha+C} \frac{l+\alpha+C+1-r}{r^\lambda} - (l+\alpha+C+1) \sum_{r=1}^{l-\alpha-1} \frac{l-\alpha-r}{r^\lambda}, \notag \\
    &+ 2l\sum_{r=1}^{l-1} \frac{l-r}{r^\lambda}
    +\sum_{r=1}^{-\alpha^2+(1+C)(l-\alpha)-1}\frac{-\alpha^2+(1+C)(l-\alpha)-r}{r^\lambda}.
\end{align}

   Let us now consider \eqref{condition_k1_k2} and \eqref{eq:difference_square_rectangle_proof_k2}, so that we can write further for general $k_2$:
\begin{align}\label{difference_nonlocal_deltatilde}
    &Per_{\lambda}(\mathscr{R}_{a,b}^{k_2})-Per_{\lambda}(\mathscr{Q}_l^{k_1})\\
    &= 2\Delta(l,\alpha,C-1,\lambda) 
-2 k_2 \sum_{r=l+1}^{l+\alpha+C} \frac{1}{r^\lambda}+ 2\left (\underbrace{\sum_{r=1}^{k_1-1} \frac{k_1-r}{r^\lambda}-\sum_{r=1}^{k_2-1} \frac{k_2-r}{r^\lambda}-\sum_{r=1}^{k_1-k_2-1} \frac{k_1-k_2-r}{r^\lambda}}_{\geq 0}
    \right ).
\end{align}
It is easy to see that the term in the bracket is strictly positive. 

Observe, that 
\begin{align}\label{dif_nonlocal_tildedelta}
     \Delta(l,\alpha,C-1,\lambda) - k_2 \sum_{r=l+1}^{l+\alpha+C} \frac{1}{r^\lambda}
    \geq \tilde \Delta (l, \alpha, C, \lambda),
\end{align}
where
\begin{align}\label{def:deltatilde}
\tilde \Delta (l, \alpha, C, \lambda):= \Delta(l,\alpha,C-1,\lambda) - (l-\alpha) \sum_{r=l+1}^{l+\alpha+C} \frac{1}{r^\lambda}.
\end{align}

In the sequel we will prove by induction that $\tilde \Delta (l, \alpha, C, \lambda)>0$ for each $C\geq 2$.

\paragraph{1st step.} We prove that $\tilde \Delta (l, \alpha, 2, \lambda) > 0$. Let us write
\begin{align}
    \tilde \Delta (l, \alpha, 2, \lambda)=&\Delta (l, \alpha, 1, \lambda)- (l-\alpha) \sum_{r=l+1}^{l+\alpha+2} \frac{1}{r^\lambda} \notag \\
    = &\left (-\alpha^2+2(l-\alpha) \right ) \sum_{r=1}^{l} \frac{1}{r^\lambda} - (l-\alpha) \sum_{r=1}^{l+\alpha+1} \frac{l+\alpha+2-r}{r^\lambda}- (l+\alpha+2) \sum_{r=1}^{l-\alpha-1} \frac{l-\alpha-r}{r^\lambda}  \notag \\
    &+ 2l\sum_{r=1}^{l-1} \frac{l-r}{r^\lambda}+\sum_{r=1}^{-\alpha^2+2(l-\alpha)-1}\frac{-\alpha^2+2(l-\alpha)-r}{r^\lambda}
    - (l-\alpha) \sum_{r=l+1}^{l+\alpha+2} \frac{1}{r^\lambda}.
\end{align}

First we will show that $\tilde \Delta (l, \alpha, 2, \lambda)$ is an increasing function of $l$ and then evaluate the expression at the smallest value of $l$. The difference is equal to
\begin{align}
   \tilde \Delta (l+1, \alpha, 2, \lambda)&-\tilde \Delta (l, \alpha, 2, \lambda) \nonumber \\
&=
    \Delta (l+1, \alpha, 1, \lambda)
    - \Delta (l, \alpha, 1, \lambda)
    - \sum_{r=l+2}^{l+\alpha+3} \frac{1}{r^\lambda}
    +(l-\alpha) \left ( \frac{1}{(l+1)^\lambda} -\frac{1}{(l+\alpha+2)^\lambda} \right ) \notag \\
    & \geq \Delta (l+1, \alpha, 1, \lambda)
    - \Delta (l, \alpha, 1, \lambda)
    - \sum_{r=l+2}^{l+\alpha+3} \frac{1}{r^\lambda} \label{eq:diff_DT_l}. 
\end{align}

Recalling that $k_1-k_2> a$, then by condition \eqref{condition_k1_k2}, we have $-\alpha^2+2(l-\alpha) > l-\alpha$. Thus, we obtain
\begin{align}
\eqref{eq:diff_DT_l} = & \frac{-\alpha^2+2(l-\alpha) }{(l+1)^\lambda} +\frac{1}{(-\alpha^2+2(l-\alpha)+1)^\lambda}
+2\sum_{r=l-\alpha+1}^{-\alpha^2+2(l-\alpha)}\frac{1}{r^\lambda} 
-\sum_{r=l-\alpha+1}^{l+\alpha+2} \frac{2l+3-r}{r^\lambda} \notag \\
  & + \sum_{r=l-\alpha+1}^{l} \frac{4l+4-r}{r^\lambda}
  - \sum_{r=l+2}^{l+\alpha+3} \frac{1}{r^\lambda} \notag \\
   =& \frac{-\alpha^2+2(l-\alpha) }{(l+1)^\lambda} +\frac{1}{(-\alpha^2+2(l-\alpha)+1)^\lambda}
+(2l+3)\sum_{r=l-\alpha+1}^{-\alpha^2+2(l-\alpha)}\frac{1}{r^\lambda} 
-\sum_{r=l+1}^{l+\alpha+2} \frac{2l+3-r}{r^\lambda}.  \label{eq:DT_l1} 
  \end{align}
  Then by reshuffling and by recalling that the assumption $k_1-k_2>a$ is equivalent to $l+1<-\alpha^2+2l-\alpha+1$, we have

  \begin{align}
 \eqref{eq:DT_l1} =& \frac{-\alpha^2+2(l-\alpha) }{(l+1)^\lambda} +\frac{1}{(-\alpha^2+2(l-\alpha)+1)^\lambda} 
+\sum_{r=l+1}^{l+\alpha+2} \frac{1}{r^{\lambda-1}} -2 \sum_{r=-\alpha^2+2l-\alpha+1}^{l+\alpha+2}\frac{1}{r^\lambda} \nonumber \\
& - (2l+1) \left ( \frac{1}{(l+\alpha+1)^\lambda}+\frac{1}{(l+\alpha+2)^{\lambda}} \right ) 
- \sum_{r=l+2}^{l+\alpha+3} \frac{1}{r^\lambda} \notag \\
& +(2l+3) \underbrace{\left (\sum_{r=l-\alpha+1}^{-\alpha^2+2(l-\alpha)}\frac{1}{r^\lambda} - \sum_{r=l+1}^{-\alpha^2+2l-\alpha}\frac{1}{r^\lambda} \right)}_{>0} + (2l+1) \underbrace{\left (\sum_{r=-\alpha^2+2(l-\alpha)+1}^{l} \frac{1}{r^\lambda} - \sum_{r=-\alpha^2+2l-\alpha+1}^{l+\alpha} \frac{1}{r^\lambda}  \right )}_{>0} \nonumber\\
& \geq \frac{-\alpha^2+2(l-\alpha) }{(l+1)^\lambda} +\frac{1}{(-\alpha^2+2(l-\alpha)+1)^\lambda} 
+\sum_{r=l+1}^{l+\alpha+2} \frac{1}{r^{\lambda-1}} -2 \sum_{r=-\alpha^2+2l-\alpha+1}^{l+\alpha+2}\frac{1}{r^\lambda} \notag \\
& - (2l+1) \left ( \frac{1}{(l+\alpha+1)^\lambda}+\frac{1}{(l+\alpha+2)^{\lambda}} \right ) - \sum_{r=l+2}^{l+\alpha+3} \frac{1}{r^\lambda}, \label{eq:hallo}
\end{align}
where the inequality follows by noting that the sums in the brackets contain the same number of terms, and the first sum is greater than the second one in each bracket. We observe that the first two remaining terms are positive and we can lower bound as follows
\begin{align}\label{last_equation_1b}
 \eqref{eq:hallo}=&\sum_{r=l+1}^{l+\alpha+2} \frac{1}{r^{\lambda-1}} -2 \sum_{r=-\alpha^2+2l-\alpha+1}^{l+\alpha+2}\frac{1}{r^\lambda}
 - (2l+1) \left ( \frac{1}{(l+\alpha+1)^\lambda}+\frac{1}{(l+\alpha+2)^{\lambda}} \right ) - \sum_{r=l+2}^{l+\alpha+3} \frac{1}{r^\lambda}\notag \\
 \geq& \sum_{r=l+1}^{l+\alpha-2} \frac{1}{r^{\lambda-1}} -2 \sum_{r=-\alpha^2+2l-\alpha+1}^{l+\alpha-2}\frac{1}{r^\lambda} - \sum_{r=l+2}^{l+\alpha+3} \frac{1}{r^\lambda} = \sum_{r=l+1}^{-\alpha^2+2l-\alpha} \frac{1}{r^{\lambda-1}} +\sum_{r=-\alpha^2+2l-\alpha+1}^{l+\alpha-2} \frac{r-2}{r^{\lambda}} - \sum_{r=l+2}^{l+\alpha+3} \frac{1}{r^\lambda}. 
\end{align}
If $\alpha=1$ it easy to see that the right-hand side of Equation $\eqref{last_equation_1b}$ is positive, indeed the second sum there exists if and only if $\alpha\geq 2$, since $l\geq 1$. Suppose now that $\alpha \geq 2$ and write
    \begin{align}
        \eqref{last_equation_1b} 
    & =\frac{l+1}{(l+1)^{\lambda}} + \underbrace{\sum_{r=l+2}^{-\alpha^2+2l-\alpha} \left ( \frac{1}{r^{\lambda-1}} - \frac{1}{r^\lambda} \right )}_{>0} + \underbrace{\sum_{r=-\alpha^2+2l-\alpha+1}^{l+\alpha-2} \frac{r-3}{r^{\lambda}} }_{(\textnormal{Eq.In0})}
    \notag \\
    & - \left ( \frac{1}{(l+\alpha-1)^\lambda}+\frac{1}{(l+\alpha)^\lambda} + \frac{1}{(l+\alpha+1)^\lambda}+\frac{1}{(l+\alpha+2)^\lambda} \right ) \notag \\
 & \geq \frac{l+1}{(l+1)^{\lambda}} +\frac{-\alpha^2+2l-\alpha-2}{(-\alpha^2+2l-\alpha+1)^{\lambda}} - \left (\frac{1}{(l+\alpha-1)^\lambda}
 +  \underbrace{\frac{1}{(l+\alpha)^\lambda} + \frac{1}{(l+\alpha+1)^\lambda}+\frac{1}{(l+\alpha+2)^\lambda}}_{(\textnormal{Eq.In1})} \right ),
\end{align}
where the inequality holds because the first sum is positive and we bounded the sum $(\textnormal{Eq.In0})$ with only the first term. 
Trivially, we can bound
\[
\frac{l+1}{(l+1)^{\lambda}} - (\textnormal{Eq.In1}) \geq \frac{l-2}{(l+1)^{\lambda}},
\]
so that we find
\begin{align}
    \tilde \Delta (l+1, \alpha, 2, \lambda)-\tilde \Delta (l, \alpha, 2, \lambda) 
    & \geq \underbrace{\frac{l-2}{(l+1)^{\lambda}} - \frac{1}{(l+\alpha-1)^\lambda}}_{>0} +\frac{-\alpha^2+2l-\alpha-2}{(-\alpha^2+2l-\alpha+1)^{\lambda}} > 0,
\end{align}
which proves the claim for that $\tilde \Delta (l, \alpha, 2, \lambda)$ is increasing in $l$. Moreover, $k_1-k_2 >a$ implies $l>\alpha^2+\alpha$, indeed
we obtain
\begin{align}\label{conditionkalpha}
    k_1-k_2>a \iff -\alpha^2+2(l-\alpha)>l-\alpha \iff l >\alpha^2+\alpha.
\end{align}
We conclude the induction step by proving $\tilde \Delta (\alpha^2+\alpha, \alpha, 2, \lambda)>0$, which follows from  
Lemma \ref{lem:increasing_alpha_2}, since $l\geq 2$.

 \paragraph{n-th step.} We want to prove that $\tilde \Delta \left (l,\alpha,C+1,\lambda \right)>0$ by using the induction hypothesis, i.e., $\tilde \Delta \left (l,\alpha,C,\lambda \right)>0$. In the following, we simplify the notation by using $\tilde \Delta(C)$ for $\tilde \Delta \left (l,\alpha,C,\lambda \right)$.
 \begin{align}
     \tilde \Delta \left (C+1 \right )-\tilde \Delta \left (C\right) &=\Delta \left ( C \right)-\Delta \left (C-1\right)-\frac{l-\alpha}{(l+\alpha+C+1)^\lambda}. 
 \end{align}

Consider first $l-\alpha=1$, then the difference is positive since
\begin{align}
     \tilde \Delta \left (C+1\right)-\tilde \Delta \left (C\right) 
    &=- \sum_{r=l+1}^{2\alpha + C+2} \frac{1}{r^\lambda}
    +\sum_{r=1}^{-\alpha^2+1+C}\frac{1}{r^\lambda} 
    -\frac{1}{(2\alpha + C+2)^\lambda}
    \notag \\
    &\geq - \zeta(\lambda, l)
    + \underbrace{\sum_{r=1}^{-\alpha^2+1+C}\frac{1}{r^\lambda}}_{> 1}
    -\frac{1}{(2\alpha + C+2)^\lambda} >0,
 \end{align}
 and $\tilde \Delta \left (C\right) >0$ by induction assumption.
From now on, we take $l-\alpha \geq 2$. Recall that $-\alpha^2+C(l-\alpha) =k_1-k_2 \geq l-\alpha+1 \geq 3$. Then,

\begin{align}
    \tilde \Delta \left (C+1\right) = &\tilde \Delta \left (C\right)+\Delta \left (C \right)-\Delta \left (C-1\right)-\frac{l-\alpha}{(l+\alpha+C+1)^\lambda} \notag \\
    =& \tilde \Delta \left (C\right)+(l-\alpha) \sum_{r=1}^l \frac{1}{r^\lambda} 
   - \sum_{r=1}^{l-\alpha-1} \frac{l-\alpha-r}{r^\lambda} 
   -(l-\alpha) \sum_{r=1}^{l+\alpha+C} \frac{1}{r^\lambda} +(l-\alpha) \sum_{r=1}^{-\alpha^2+C(l-\alpha)-1} \frac{1}{r^\lambda}\notag \\
   &+\sum_{r=-\alpha^2+C(l-\alpha)}^{-\alpha^2+(1+C)(l-\alpha)-1} \frac{-\alpha^2+(1+C)(l-\alpha)-r}{r^\lambda}
   -\frac{l-\alpha}{(l+\alpha+C+1)^\lambda}.
\end{align}
By induction hypothesis $\tilde \Delta \left (C\right) >0$ and we can further bound 
\begin{align}
    \tilde \Delta \left (C+1\right) &\geq  
    (l-\alpha)\sum_{r=l-\alpha}^{-\alpha^2+C(l-\alpha)-1}  \underbrace{\left ( \frac{1}{r^\lambda} -\frac{1}{(r+\alpha+1)^\lambda} \right )}_{>0}
   -(l-\alpha) \sum_{r=-\alpha^2+C(l-\alpha)}^{l+C-1} \frac{1}{(r+\alpha+1)^\lambda} \notag \\
   &+\sum_{r=1}^{l-\alpha-1} \frac{1}{r^{\lambda-1}}
   +\sum_{r=-\alpha^2+C(l-\alpha)}^{-\alpha^2+(1+C)(l-\alpha)-1} \frac{-\alpha^2+(1+C)(l-\alpha)-r}{r^\lambda} 
   -\frac{l-\alpha}{(l+\alpha+C+1)^\lambda}. \label{eq:kaput}
\end{align}

The first sum in the equation above is positive, hence we will drop it, we reshuffle the terms in the sums and get
\begin{align}
     \eqref{eq:kaput} &\geq 
      -(l-\alpha) \sum_{r=-\alpha^2+C(l-\alpha)}^{l+C-1} \frac{1}{(r+\alpha+1)^\lambda} 
     +(l-\alpha)\sum_{r=-\alpha^2+C(l-\alpha)}^{-\alpha^2+(1+C)(l-\alpha)-1} \frac{1}{r^\lambda}\notag \\
     & +\sum_{r=-\alpha^2+C(l-\alpha)}^{-\alpha^2+(1+C)(l-\alpha)-1} \frac{-\alpha^2+C(l-\alpha)}{r^\lambda} 
     -\frac{l-\alpha}{(l+\alpha+C+1)^\lambda}. \label{eq:kaput1}
\end{align}

To simplify the notation we will write from now on $k_1-k_2:=k$ and use $k=-\alpha^2+C(l-\alpha)$,  $k \geq 3$, and $\lambda>1.8$:
\begin{align}
  \eqref{eq:kaput1}
    \geq & -(l-\alpha) \sum_{r=k}^{l+C-1} \frac{1}{(r+\alpha+1)^\lambda}+(l-\alpha) \sum_{r=k}^{k+l-\alpha} \frac{1}{r^\lambda} +\sum_{r=k}^{k+l-\alpha} \frac{-\alpha^2+C(l-\alpha)}{r^\lambda} -\frac{l-\alpha}{(l+\alpha+C+1)^\lambda}  \notag \\
    = & - \underbrace{(l-\alpha) \sum_{r=1}^{l+C-k} \frac{1}{(r+\alpha+k)^\lambda}}_{(\textnormal{Eq.S1})}+(l-\alpha) \sum_{r=1}^{l-\alpha+1} \frac{1}{(r+k-1)^\lambda} \notag \\
    &+\sum_{r=1}^{l-\alpha+1} \frac{-\alpha^2+C(l-\alpha)}{(r+k-1)^\lambda} -\frac{l-\alpha}{(l+\alpha+C+1)^\lambda}. \label{eq:kaput2}
\end{align}
By the choice of $C, l, \alpha, a, k$, we have that:
\begin{align*}
    2 \sum_{r=1}^{l-\alpha-1} \frac{1}{r^{\lambda}} > \sum_{r=1}^{2(l-\alpha-1)} \frac{1}{r^{\lambda}} \geq \sum_{r=1}^{l+C-(k_1-k_2)} \frac{1}{r^{\lambda}},
\end{align*}
and hence
\[
(\textnormal{Eq.S1}) \leq 2(l-\alpha) \sum_{r=1}^{l-\alpha-1} \frac{1}{(r+\alpha+k)^\lambda}.
\]
Then we reshuffle the sums in the following way:

\begin{align*}
-2(l-\alpha) &\sum_{r=1}^{l-\alpha-1} \frac{1}{(r+\alpha+k)^\lambda}+(l-\alpha) \sum_{r=1}^{l-\alpha+1} \frac{1}{(r+k-1)^\lambda} +\sum_{r=1}^{l-\alpha+1} \frac{-\alpha^2+C(l-\alpha)}{(r+k-1)^\lambda}\\ &= 
   \left (\frac{(-\alpha^2+C(l-\alpha))}{(l-\alpha+k-1)^\lambda}+ \frac{(-\alpha^2+C(l-\alpha))}{(l-\alpha+k-2)^\lambda} \right )
     +\left ( \frac{(l-\alpha)}{(l-\alpha+k-1)^\lambda} +\frac{(l-\alpha)}{(l-\alpha+k-2)^\lambda} \right ) \nonumber \\
&+ \left (\underbrace{ \sum_{r=1}^{l-\alpha-1} \frac{(l-\alpha)}{(r+k-1)^\lambda} - \sum_{r=1}^{l-\alpha-1} \frac{(l-\alpha)}{(r+\alpha+k)^\lambda}}_{>0}\right ) 
     +\left (\underbrace{\sum_{r=1}^{l-\alpha-1} \frac{-\alpha^2+C(l-\alpha)}{(r+k-1)^\lambda}
     -  \sum_{r=1}^{l-\alpha-1} \frac{(l-\alpha)}{(r+\alpha+k)^\lambda}}_{>0} \right ).
     \notag 
\end{align*}
Ignoring the positive contributions from the last equation, and using the bound on (Eq.S1) we can lower bound $\tilde \Delta \left (C+1\right)$ further by

\begin{align}
   \eqref{eq:kaput2} &\geq 
      (l-\alpha)\left ( \frac{1}{(l-\alpha+k-1)^\lambda} +\frac{1}{(l-\alpha+k-2)^\lambda} -\frac{1}{(l+\alpha+C+1)^\lambda}\right ) \notag \\
     \geq & (l-\alpha)\left ( \frac{2}{(l-\alpha+k-2)^\lambda} -\frac{1}{(l+\alpha+C+1)^\lambda}\right ) > 0,
\end{align}
since $-\alpha^2+C(l-\alpha)=k >0$ and $l-\alpha+k-2 \leq l+\alpha+C+1$ which yields the conclusion for the induction step.
\end{itemize}

\subparagraph{Case (ii.2).} Recall that in this case the protuberance of the rectangle $\overline{\mathscr{R}}_{a,b}^{k_2}$ is attached along the longer side $b$.
By assumption, the classical perimeter of $\mathscr{Q}_l^{k_1}$ is smaller than the classical perimeter of all rectangles $\overline{\mathscr{R}}_{a,b}^{k_2}$, i.e. we have that $2(a+b+1)=2(2l+1+C)$ for some $C \geq 1$.  

We distinguish three cases: $k_2\in \{0,\ldots, a-1\}$, $k_2 \in \{a,\ldots, k_1\}$, and $k_2>\max\{k_1,a\}$.
\begin{itemize}
\item[(ii.2.a)] $k_2 < a$. In this case, we consider the rectangle $\mathscr{R}_{a,b}^{k_2} \in \mathcal{M}_n$ with the protuberance attached along the shorter side $a$ and we will show that its nonlocal perimeter is smaller than the nonlocal perimeter of $\overline{\mathscr{R}}_{a,b}^{k_2} \in \mathcal{M}_n$. Indeed,
    \begin{align}\label{eq:waschen}
        Per_\lambda(\overline{\mathscr{R}}_{a,b}^{k_2})-Per_\lambda(\mathscr{R}_{a,b}^{k_2})= k_2 \left ( \zeta(\lambda, a+1) - \zeta(\lambda, b+1)\right )=k_2 \sum_{r=a+1}^{b} \frac{1}{r^\lambda} >0.
    \end{align}
The claim follows from 
\[
Per_\lambda(\overline{\mathscr{R}}_{a,b}^{k_2}) - Per_\lambda(\mathscr{Q}_l^{k_1}) = (\underbrace{Per_\lambda(\overline{\mathscr{R}}_{a,b}^{k_2}) -Per_\lambda(\mathscr{R}_{a,b}^{k_2})}_{> 0 \text{ by }\eqref{eq:waschen}}) + (\underbrace{ Per_\lambda(\mathscr{R}_{a,b}^{k_2})- Per_\lambda(\mathscr{Q}_l^{k_1}}_{> 0 \text{ by Case (ii.1)}}) >0.
\]

\item[(ii.2.b)] $k_2 \in \{a, \dots, k_1\}$. We note that if $k_1\leq a-1$ then this interval is empty. As before, by using \eqref{eq:semiper_square} and \eqref{eq:semiper_rectangle} we obtain the following
\begin{align}
    Per_{\lambda}(\mathscr{R}_{a,b}^{k_2})&-Per_{\lambda}(\mathscr{Q}_l^{k_1}) \nonumber\\
= & (Per_{\lambda}(\mathscr{R}_{a,b})- Per_{\lambda}(\mathscr{Q}_l)) \nonumber
-2\sum_{i=k_2+1}^{k_1} \zeta(\lambda, i)
-2 (k_1-k_2) \zeta(\lambda, l+1)
+2k_2 \sum_{r=a+1}^{l} \frac{1}{r^\lambda} \notag \\
\geq & (Per_{\lambda}(\mathscr{R}_{a,b})- Per_{\lambda}(\mathscr{Q}_l))
-2\sum_{i=1}^{k_1-k_2} \zeta(\lambda, i)
-2 (k_1-k_2)\zeta(\lambda, l+1) \notag \\
=& 2\tilde{F}_1(a,l, k_1-k_2)+2\tilde{F}_2(a,l,k_1-k_2,b),
\end{align}
where the functions $\tilde F_1, \tilde F_2$ are defined in \eqref{def:F_tilde1} and \eqref{def:F_tilde2}. In Case (ii.1.a) we proved that $k\mapsto \tilde{F}_2(a,l, k,b)$ is decreasing in $k$. Since $k_1-k_2 \leq l-1$, we have to evaluate $\tilde{F}_2(a,l, l-1, b)$ which is strictly positive by \eqref{F2tilde_positive}. 

By the conditions \eqref{condition_k1_k2} and recalling $k_1 \leq l-1$ and $k_2 \geq a$ we have that
\begin{align}
    -\alpha^2+C(l-\alpha)=k_1-k_2 \leq l-1-a \iff -(l-a)^2+Ca \leq l-1-a \implies l \geq a+\sqrt{3a}.
\end{align}

Again using Case (ii.1.a) we have that for 
$l \geq a+ \sqrt{a}$, $\tilde{F}_1(a,l, k_1-k_2) > 0$ which yields the claim.

In case that $k_2=k_1$, then $\tilde{F}_1(a,l, k_1-k_2)=F_1(l,a)$ and $\tilde{F}_2(a,l,k_1-k_2,b)=F_2(l,a, b)$, where $F_1,F_2$ are defined in \eqref{def:F1}, \eqref{def:F2}, and we can conclude in the same way. 
\item[(ii.2.c)] $k_2 > \max \{ a, k_1 \}$.  
By using \eqref{eq:semiper_square} and \eqref{eq:semiper_rectangle} we obtain the following
\begin{align}
 &   Per_{\lambda}(\mathscr{R}_{a,b}^{k_2})-Per_{\lambda}(\mathscr{Q}_l^{k_1}) \nonumber \\
&= (Per_{\lambda}(\mathscr{R}_{a,b})- Per_{\lambda}(\mathscr{Q}_l))
-2\sum_{i=1}^{k_2-k_1} \zeta(\lambda, i)
+2 (k_2-k_1) \zeta(\lambda, l+1)
+2\sum_{i=1}^{k_2-k_1}\zeta(\lambda, i+k_1) +2\sum_{i=1}^{k_2-k_1}\zeta(\lambda, i) \\
&=2\tilde{F}_1(a,l, k_2-k_1)+ \underbrace{2\tilde{F}_2(a,l,k_2-k_1,b)
+2\sum_{i=1}^{k_2-k_1}\zeta(\lambda, i+k_1) +2\sum_{i=1}^{k_2-k_1}\zeta(\lambda, i)}_{(\textnormal{Eq.S2})}, \nonumber
\end{align}
where we again find the functions $\tilde F_1, \tilde F_2$ that are defined in \eqref{def:F_tilde1} and in \eqref{def:F_tilde2} respectively.
As in Case (ii.1.a), we have that $\tilde{F}_1(a,l, k_2-k_1) \geq 0$ if $l \geq a+ \sqrt{a}$. Trivially we get that $l \geq a+ \sqrt{a}$ also in this case.
The positivity of $(\textnormal{Eq.S2})$ follows from analogous arguments.
\end{itemize}

Summarizing the results of the Cases (i) and (ii), we can conclude that for $n=l^2+k_1$, we have proven that
    \begin{align}
        \text{argmin}_{\mathcal{P} \in \mathcal{M}_{n}} \{Per_{\lambda}(\mathcal{P}) \} \subset 
        \{ \mathscr{Q}_l^{k_1} \in \mathcal{M}_n \} 
        \cup \left\{ \mathscr{R}_{a,b}^{k_2} \in \mathcal{M}_n \, | \, per(\mathscr{R}_{a,b}^{k_2} ) = per(\mathscr{Q}_l^{k_1})\right \}.
    \end{align}

\subsection{Proofs of the auxiliary lemmas}\label{proof_lemmas}
\begin{proof}[Proof of Lemma~\ref{lem:rows}]
From now on, in order to simplify the notation, we drop the dependence on $r$ in $d^{(r)}_j$.
Let $d_1$ (resp. $d_2$) the length of the horizontal (resp. vertical) strip containing $x$. 
We define a bijection, where we identify every unit square in $Q(x) \subset \mathcal{P}$ with the four distances between the center point $x$ and the north, south, west and east nearest sites in $\mathbb{Z}^2 \cap \mathcal{P}$ along the horizontal and vertical directions, i.e.,   
\begin{align*}
     Q(x) \qquad & \to \qquad  (x_1,d_1-x_1,x_2,d_2-x_2);
\end{align*}
\begin{figure}
\begin{center}
    \includegraphics[scale=1]{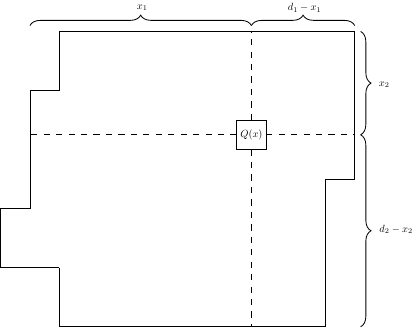}
    \caption{An example of a convex polyomino $\mathcal{P}$, a fixed square $Q(x) \subset \mathcal{P}$ and the respective distances from the sites in $\mathbb{Z}^2 \cap \{\mathbb{R}^2 \setminus \mathcal{P} \}.$}
 \label{fig:N_S_E_W}
\end{center}
    \end{figure}
    see Figure \ref{fig:N_S_E_W} for an example.
Thus, we have
\begin{align}\label{contribution_unit_square}
Per_{\lambda}(Q(x))= \zeta(\lambda, x_1) +\zeta(\lambda, d_1-x_1) + \zeta(\lambda, x_2)+\zeta(\lambda, d_2-x_2),
\end{align}
where $x_1$ and $x_2$ (resp. $d_1-x_1$ and $d_2-x_2$) are respectively the horizontal and vertical distances between $x$ and the north and the west (resp. south and the east) nearest sites in $\mathbb{Z}^2 \cap \mathcal{P}^c$.
Moreover, $Per_{\lambda}^H(\mathcal{P})$ can be written as the sum of all contributions of all rows intersecting $\mathcal{P}$, i.e,. 
\begin{align*}
 Per_{\lambda}^{H}(\mathcal{P} )= Per_{\lambda}^H \left ( \bigcup_{r}  \{ \mathcal{P} \cap r\} \right ) =\sum_{r} Per_{\lambda}^{H}(\mathcal{P} \cap r).
\end{align*}
The horizontal nonlocal perimeter contribution $Per_{\lambda}^H(\mathcal{P} \cap r)$ again  can be written as the sum of all contributions of all unit squares composing $\mathcal{P} \cap r$, i.e.,
\begin{align*}
    Per_{\lambda}^H(\mathcal{P} \cap r)
    =&\sum_{i=1}^{l_1} \zeta(\lambda, i)
    +\sum_{j=2}^n\sum_{i=1}^{l_j} \left ( \zeta(\lambda, i) -
    \sum_{k=d_{j-1}+i}^{i+l_{j-1}+d_{j-1}-1}\frac{1}{k^\lambda} \right )
    +\sum_{i=1}^{l_n}\zeta(\lambda, i)
    +\sum_{j=1}^{n-1}\sum_{i=1}^{l_j} \left ( \zeta(\lambda, i)-
    \sum_{k=d_{j}+i}^{i+l_{j+1}+d_{j}-1}\frac{1}{k^\lambda} \right ). \notag \\
\end{align*}
The case for $n=1$ follows trivially.
\end{proof}

\begin{proof}[Proof of Lemma \ref{reduction_distance}]

We consider a column of $\mathcal{P}$ that contains at least two vertical strips, $S_1,\ldots, S_N$. Suppose $N=2$, and starting from the top and proceeding in lexicographic order, let $l_1$, $l_2$ be the lengths of the first and the second vertical strips. Let $\mathcal{P}'$ be the polyomino obtained from $\mathcal{P}$ by moving along the column the first strip toward the other and by reducing the distance by $d-1$. Then, the lengths of the two strips $S'_1, S'_2$ in $\mathcal{P}'$ are equal to the lengths of $S_1$ and $S_2$, i.e., $l_1$ and $l_2$. 
Using Lemma \ref{lem:rows} and considering first a reduction in distance of the two strips by $d-1$, we compute the difference between the nonlocal perimeter contributions along this column 
\begin{align*}
    Per_{\lambda}^V(S_1 \cup S_2) &- Per_{\lambda}^V(S'_1 \cup S'_2)\\
    =& \sum_{i=1}^{l_1} \zeta(\lambda, i) + \sum_{i=1}^{l_2} \left (\zeta(\lambda, i) -\sum_{k=i+d}^{i+d+l_1-1} \frac{1}{k^\lambda} \right)+\sum_{i=1}^{l_2}\zeta(\lambda, i) + \sum_{i=1}^{l_1} \left (\zeta(\lambda, i) -\sum_{k=i+d}^{i+d+l_2-1} \frac{1}{k^\lambda} \right) \notag \\
    - &\left [ \sum_{i=1}^{l_1} \zeta(\lambda, i) + \sum_{i=1}^{l_2} \left (\zeta(\lambda, i) -\sum_{k=i+d-1}^{i+d+l_1-2} \frac{1}{k^\lambda} \right)+\sum_{i=1}^{l_2}\zeta(\lambda, i) + \sum_{i=1}^{l_1} \left (\zeta(\lambda, i) -\sum_{k=i+d-1}^{i+d+l_2-2} \frac{1}{k^\lambda} \right) \right ] \notag \\
    =& \sum_{i=1}^{l_2} \sum_{k=i+d}^{i+d+l_1-1} \left (\frac{1}{(k-1)^\lambda} -\frac{1}{k^\lambda} \right)+
    \sum_{i=1}^{l_1} \sum_{k=i+d}^{i+d+l_2-1} \left (\frac{1}{(k-1)^\lambda} -\frac{1}{k^\lambda} \right)>0.
\end{align*}
The general case for $N>2$, and for the horizontal strips, follows in the same way. 
\end{proof}

\begin{proof}[Proof of Lemma \ref{lem:reduction_strips}]

We construct the polyomino $\mathcal{P}'$ as follows. Let $x_1,\ldots,x_k \in c^2_{\mathcal{P}} \cap \bigcup_{i=1}^{N}A_i$ be the middle points of the squares that do not belong to $\mathcal{P}$, and let $r_1,\ldots,r_k$ be their corresponding rows. We move along the horizontal axes each unit square of $c^1_{\mathcal{P}} \cap r_i \neq \emptyset$ from its position to $x_i$ for $i=1,\ldots,k$, see the right side in Figure \ref{fig:different_strips}. In other words, investigating rows in $c^2_{\mathcal{P}}$, every time there is a red square in $c^1_{\mathcal{P}}$ but not in $c^2_{\mathcal{P}}$ we move it to $c^2_{\mathcal{P}}$ and call this new polyomino $\mathcal{P}'$.

\begin{figure}[htb!]
\begin{center}
    \includegraphics[scale=0.3]{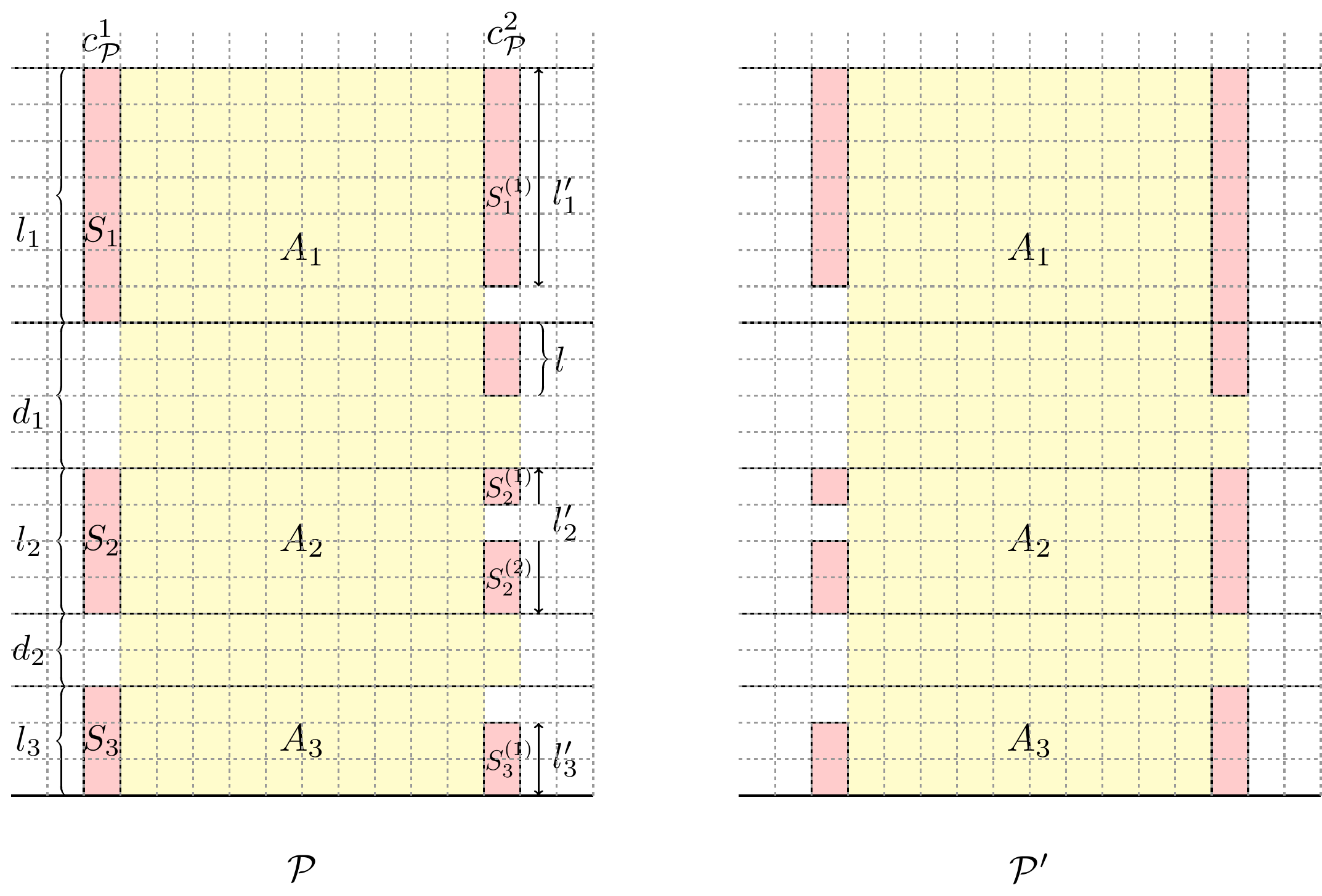}
    \caption{On the left there is an example of a possible polyomino $\mathcal{P}$. The column $c^1_{\mathcal{P}}$ consists of 3 strips of lengths 7, 4 resp. 3.  $A_1,A_2,A_3$ are the rows that intersect the strips $S_1, S_2$ resp. $S_3$. The column $c^2_{\mathcal{P}}$ has four strips which intersect the rows in the areas $A_1,A_2,A_3$.
    On the right, we see the polyomino $\mathcal{P}'$ obtained from $\mathcal{P}$ with the same area and such that $Per_{\lambda}(\mathcal{P}') < Per_{\lambda}(\mathcal{P})$. The red unit squares belonging to the polyominos, whereas the white unit squares are not in $\mathcal{P}$. The yellow squares are not investigated yet and can either belong to $\mathcal{P}$ or not.}
 \label{fig:different_strips}
\end{center}
    \end{figure}

Note that if $c^2_{\mathcal{P}'} \cap (\bigcup_{i=1}^{N}A_i)^c= \emptyset$, then $Per_{\lambda} (c^1_{\mathcal{P}} \cup c^2_{\mathcal{P}}) = Per_{\lambda} (c^1_{\mathcal{P}'} \cup c^2_{\mathcal{P}'})$, because in this case we just swapped the unit squares in the two columns. Suppose that there are $\beta=\sum_{i=1}^{N} m_i>0$ vertical strips which do not intersect with rows in $A_1,\ldots, A_N$, i.e., in $c^2_{\mathcal{P}} \cap \mathcal{P} \cap \Big (\bigcup_{i=1}^{N} A_i \Big)^c$. In order to simplify the proof we suppose $\beta=1$ and we denote by $l$ the length of this vertical strip, see the example in Figure \ref{fig:different_strips}. 
Then, we have
\begin{align*}
    Per_{\lambda} (c^1_{\mathcal{P}} \cup c^2_{\mathcal{P}}) = Per_{\lambda} \left(\bigcup_{i=1}^N S_i \right) + Per_{\lambda} \left(\bigcup_{i=1}^N\bigcup_{j=1}^{m_i} S^{(j)}_i \right)
    +\sum_{i=1}^{l} \zeta(\lambda, i)-K_1,
\end{align*}
where $K_1$ accounts for the terms in the double sum which are counted twice due to the contribution of the perimeter from the presence of the unit squares in $S_1^{(1)},\ldots,S_N^{(m_N)}$. In other words, the contribution of the nonlocal perimeter of the columns consists of the contributions of the strips in these columns, which do not intersect with $A_1,\ldots, A_N$. 
Analogously, let $S'_1,\ldots, S'_{N'}$ (resp. $S_1'^{(1)},\ldots,S_1'^{(m_1)}, S_2'^{(1)},\ldots,S_2'^{(m_2)},\ldots,S_{N'}'^{(1)},\ldots, S_{N'}'^{(m_{N'})}$)  be the vertical strips in $c^1_{\mathcal{P}'}$ (resp. $c^2_{\mathcal{P}'}$), we write
\begin{align*}
    Per_{\lambda} (c^1_{\mathcal{P}'} \cup c^2_{\mathcal{P}'}) & = Per_{\lambda} \left(\bigcup_{i=1}^{N'} S'_i \right) + Per_{\lambda} \left(\bigcup_{i=1}^{N'} \bigcup_{j=1}^{m_i} S'^{(j)}_i \right)
    +\sum_{i=1}^{l} \zeta(\lambda, i)-K_2 \\
    & =   Per_{\lambda} \left(\bigcup_{i=1}^N S_i \right) + Per_{\lambda} \left(\bigcup_{i=1}^N\bigcup_{j=1}^{m_i} S^{(j)}_i \right) 
    +\sum_{i=1}^{l} \zeta(\lambda, i) -K_2,
\end{align*}
where $K_2$ accounts for the terms in the double sum which are counted twice due to the contribution of the perimeter from the presence of the unit squares in $S_1'^{(1)},\ldots, S_{N'}'^{{(m_{N'})}}$. Since $K_2 \geq K_1$ we have that
\begin{align*}
       Per_{\lambda} (c^1_{\mathcal{P}'} \cup c^2_{\mathcal{P}'}) =     Per_{\lambda} (c^1_{\mathcal{P}} \cup c^2_{\mathcal{P}}) +K_1-K_2 \leq 0.
\end{align*}
The case  $\beta>1$ follows in an analogous way.
\end{proof}

\begin{proof}[Proof of Lemma \ref{lem:increasing_alpha_2}]
We compute by hand $f(2)$ and we find that $f(2)>0$. Then, assume $x \geq 3$ and
  we simplify the function $f(x)$ in the following way.
    \begin{align*}
        f(x) = &\sum_{r=1}^{x^2-1}\frac{1}{r^{\lambda-1}}+
        \sum_{r=x^2}^{x^2+x-1} \frac{x^2(x+1)^2-x(x+1)r}{r^\lambda} 
        -x^2 \sum_{r=x^2}^{x^2+x-1} \frac{1}{(r+x+1)^\lambda} 
        -x^2 \left (\frac{1}{(x^2+2x+1)^\lambda} +\frac{1}{(x^2+2x+2)^\lambda} \right ) \notag \\
        &-x\sum_{r=x^2}^{x^2+x-1} \frac{1}{r^{\lambda-1}}
        + \frac{x}{x+1} 
        -x^2\sum_{r=x^2+x}^{(x+1)^2} \frac{x^2+2x+2-r}{r^\lambda}.
        \end{align*}
        We note that
        \begin{align}
            \sum_{r=x^2}^{x^2+x-1} \frac{x^2(x+1)^2-x(x+1)r}{r^\lambda} 
        -x^2 \sum_{r=x^2}^{x^2+x-1} \frac{1}{(r+x+1)^\lambda} 
        -x^2 \left (\frac{1}{(x^2+2x+1)^\lambda} +\frac{1}{(x^2+2x+2)^\lambda} \right ) \geq 0.
        \end{align}
        Indeed we consider the first term of the first and the second sum and we compare them with the term in the brackets:
        \begin{align}
        &\frac{x^2(x+1)}{x^{2\lambda}} 
        -x^2 \left (\frac{1}{(x^2+x+1)^\lambda}+\frac{1}{(x^2+2x+1)^\lambda} +\frac{1}{(x^2+2x+2)^\lambda} \right ) \geq \frac{x^2(x+1)}{x^{2\lambda}}  -\frac{3x^2}{(x^2+x+1)^\lambda} \geq 0,
        \end{align}
        since $x \geq 3$. Moreover
\begin{align}
    \sum_{r=x^2+1}^{x^2+x-1} \frac{x^2(x+1)^2-x(x+1)r}{r^\lambda} 
        -x^2 \sum_{r=x^2+1}^{x^2+x-1} \frac{1}{(r+x+1)^\lambda} 
        &= \sum_{r=1}^{x-1} \left (\frac{x^2(x+1)^2-x(x+1)(r+x^2)}{(r+x^2)^\lambda} 
        - \frac{x^2}{(r+x^2+x+1)^\lambda} \right ) \notag \\
        & \geq \sum_{r=1}^{x-1} \frac{x^3-x(x+1)r}{(r+x^2)^\lambda} >0.
\end{align}
Thus,
        \begin{align}
        f(x) \geq & \sum_{r=1}^{x^2-1}\frac{1}{r^{\lambda-1}}
        -x\sum_{r=x^2}^{x^2+x-1} \frac{1}{r^{\lambda-1}}
        -x^2\sum_{r=x^2+x}^{(x+1)^2} \frac{(x+1)^2+1-r}{r^\lambda}.
    \end{align}
Let us call 
\[
g(x):=\sum_{r=1}^{x^2-1}\frac{1}{r^{\lambda-1}}
        -x\sum_{r=x^2}^{x^2+x-1} \frac{1}{r^{\lambda-1}}
        -x^2\sum_{r=x^2+x}^{(x+1)^2} \frac{(x+1)^2+1-r}{r^\lambda}.
\]
Next, we prove that $g(x)$ is increasing in $x$.
\begin{align}\label{increasing_g_lemma}
    g(x+1)-g(x)=&\underbrace{\sum_{r=x^2}^{x^2+2x}\frac{1}{r^{\lambda-1}}-\sum_{r=x^2+2x+1}^{x^2+3x+1}\frac{1}{r^{\lambda-1}}}_{(\textnormal{Eq.A1})}
        +x \underbrace{\left (\sum_{r=x^2}^{x^2+x-1} \frac{1}{r^{\lambda-1}}-\sum_{r=x^2+2x+1}^{x^2+3x+1} \frac{1}{r^{\lambda-1}} \right)}_{(\textnormal{Eq.A2})} \notag \\
        &+x^2 \sum_{r=x^2+x}^{(x+1)^2} \frac{(x+1)^2+1-r}{r^\lambda} 
        -(x+1)^2 \sum_{r=x^2+3x+2}^{(x+2)^2} \frac{(x+2)^2+1-r}{r^\lambda}.
\end{align}
We have the following lower bound for (Eq.A1):
\begin{align*}
 (\textnormal{Eq.A1})=   \sum_{r=x^2}^{x^2+2x}\frac{1}{r^{\lambda-1}}-\sum_{r=x^2+2x+1}^{x^2+3x+1}\frac{1}{r^{\lambda-1}}
    & \geq \sum_{r=x^2+x+1}^{x^2+2x}\frac{1}{r^{\lambda-1}};
\end{align*}
moreover, 
\begin{align*}
    (\textnormal{Eq.A2}) 
    &=\sum_{r=x^2}^{x^2+x-1} \left (\frac{1}{r^{\lambda-1}}-\frac{1}{(r+2x+1)^{\lambda-1}} \right )-\frac{1}{(x^2+3x+1)^{\lambda-1}}
    \geq -\frac{1}{(x^2+3x+1)^{\lambda-1}}.
\end{align*}
Thus, by reshuffling the terms in the sum
\begin{align*}
    \eqref{increasing_g_lemma} 
       & \geq \sum_{r=x^2+x+1}^{x^2+2x}\frac{1}{r^{\lambda-1}} +x^2 \sum_{r=x^2+x}^{(x+1)^2} \frac{(x+1)^2+1-r}{r^\lambda} 
        -(x+1)^2 \sum_{r=x^2+3x+1}^{(x+2)^2} \frac{(x+2)^2+1-r}{r^\lambda} \notag \\
        &= \sum_{r=x^2+x+1}^{x^2+2x} \left (\frac{x^2(x^2+2x+2)}{r^{\lambda}}- (x+1)^2\frac{x^2+x+5-r}{(r+x)^\lambda} \right ) +\frac{x^2(x+2)}{(x^2+x)^\lambda}+\frac{1}{(x+1)^{2\lambda}} \notag \\
        &-(x+1)^2 \sum_{r=x^2+2x+1}^{x^2+2x+4} \frac{x^2+x+5-r}{(r+x)^\lambda} \geq \sum_{r=x^2+x+1}^{x^2+2x} \frac{(x+1)^2(r-x-5)}{r^{\lambda}}+\frac{x^2(x+2)}{(x^2+x)^\lambda}+\frac{(x-4)(x+1)^2}{(x^2+3x+1)^\lambda} ,
\end{align*}
since $x\geq 3$. Moreover, if $r>x+5$ the first numerator is positive and in this case the minimal value of $r$ is $x^2+x+1$ and $x^2+x+1>x+5$ for $x\geq 2$. Then we get that
\begin{align*}
   g(x+1)-g(x)\geq \frac{x^2(x+2)}{(x^2+x)^\lambda}+\frac{(x-4)(x+1)^2}{(x^2+3x+1)^\lambda}.  
\end{align*}
If $x\geq 4$ then, $  g(x+1)-g(x) > 0$. Suppose $x=3$, we obtain
\begin{align}
    g(4)-g(3)=\frac{45}{12^\lambda}-\frac{16}{19^\lambda}>0,
\end{align}
since $\lambda>1$.
Thus, $g(x)$ is increasing in $x$ and $x \geq 3$. Then we can compute by hand that for $\lambda>1.8$
\begin{align}
    g(3)&= \sum_{r=1}^{8}\frac{1}{r^{\lambda-1}}
        -3\sum_{r=9}^{11} \frac{1}{r^{\lambda-1}}
        -9\sum_{r=12}^{16} \frac{17-r}{r^\lambda}>0.
\end{align}
\end{proof}

\section{Metastability of the two dimensional \emph{bi-axial} long-range Ising model}
\label{s:application}

In this section we explain how the solution of the nonlocal discrete isoperimetric
problem is related to the rigorous study of metastability for a two-dimensional
\emph{bi-axial} long-range Ising model. Theorem~\ref{main_theorem} yields a complete
characterization of the minimizers of the discrete nonlocal perimeter among
polyominoes with fixed area. For the bi-axial long-range Ising model, this result
directly translates into a characterization of the configurations minimizing the
energy within each foliation of fixed magnetization, up to negligible finite-volume
corrections, since at fixed magnetization the Hamiltonian differs from the nonlocal
perimeter only by an explicit bulk term (see Equation~(\ref{delta_per})).
This correspondence provides the main variational ingredient in the \emph{pathwise
approach} to metastability. In the subsequent sections, we introduce the metastability
problem, emphasize the structural differences between short-range and long-range
interactions, and describe in detail the heuristic metastable scenario for the
present model. In particular, Section~\ref{sec:rigorous} contains rigorous results,
together with a discussion of the remaining open problems and the main obstacles to a
complete metastability analysis.

\subsection{The metastability problem}
\label{s:two}
Metastability is a phenomenon that arises in stochastic systems out of equilibrium,
in which the dynamics remains trapped for very long times in locally stable
configurations (metastable states), before undergoing a sudden transition toward a
lower-energy state, typically the stable one. 

The mathematical study of metastability revolves around three main problems:
(i) the estimation of transition times between metastable and stable states, in
particular the determination of their asymptotic scale; (ii) the characterization
of critical configurations, namely those configurations that the system must cross
in order to exit the metastable state; (iii) the identification of the tube of
trajectories, that is, the set of paths that are followed with high probability
during the metastable–stable transition.

In this section we introduce the main definitions and discuss the most significant
results obtained for the short-range Ising model, as well as the heuristic picture
currently available in the long-range case. For rigorous results in the context of
the model under consideration we refer to Section~\ref{sec:rigorous}.

Let $\Lambda$ denote the two dimensional torus with a sufficiently large side-length $L$. We define a collection of random variables (or Ising spins) $\sigma: \Lambda \longrightarrow \{-1,+1\}^{\Lambda}$. We denote with $\mathcal{X} := \{-1,+1\}^{\Lambda}$  the configuration space, and define the Hamiltonian of the short-range Ising in the usual way:
\[
H^\textnormal{n.n}(\sigma) := -\sum_{{x,y \in \Lambda:}\atop{d_E(x,y)=1}} \sigma_x \sigma_y - h \sum_{x\in \Lambda} \sigma_x.
\]

For simplicity, let us denote by ${\bf +1}$ (resp. ${\bf -1}$) the configuration given by $\sigma=(+1)_{x\in \Lambda}$  (resp. $\sigma=(-1)_{x\in \Lambda}$). We say that two configurations in  $\mathcal{X}$ are 
\emph{communicating} if and only if they differ at most for the value 
of the spin at one site.
We consider then Glauber dynamics with Metropolis weights, i.e., a discrete time Markov chain $\sigma^t\in\mathcal{X}$, 
for any $t\in \mathbb{N}_{\geq0}$, with null transition matrix for non-communicating configurations and instead transition matrix
\begin{equation}
\label{mod015}
p_\beta(\eta,\eta')
:=
 \frac{1}{2|\Lambda|}e^{-\beta[H^\textnormal{n.n}(\eta')-H^\textnormal{n.n}(\eta)]_+}
\end{equation}
for any communicating configurations $\eta,\eta'\in\mathcal{X}$ such 
that $\eta\neq\eta'$
(where, for any $a\in\mathbb{R}$, we let $[a]_+=a$  if $a>0$, and $[a]_+=0$ if $a<0$). In case $\eta=\eta'$ we pose: 
\begin{equation}
\label{mod020}
p_\beta(\eta,\eta)
=1-\sum_{\sigma\neq\eta}p_\beta(\eta,\sigma)
\end{equation}
for any $\eta\in\mathcal{X}$.
A pivotal  quantity of interest is the expectation of the first excursion time  $\tau_+$ of the Markov chain from  the metastable state ${\bf -1}$ towards the stable state {\bf +1}, i.e., $\tau_+:=\inf\{t\in \mathbb{N}:\sigma^t=\textbf{+1}, \sigma^0=\textbf{-1}\}$. 
 
Following the so called \emph{path-wise approach} to metastability (see the general reference \cite{olivieri2005} and \cite{manzo2004}), the crucial idea is finding the optimal path connecting the metastable state to the stable one, and computing its energy height. Along the exit path, the system necessarily visits those particular configurations, called the \emph{critical droplets}, at which the optimal path attains its maximum.
Following \cite{manzo2004},  we have indeed that: 
\begin{equation}
\lim_{\beta\to\infty} \frac{1}{\beta}\ln(\mathbb{E}_{\textbf{-1}}(\tau_+))=\Phi^\textnormal{n.n}(\textbf{-1},\textbf{+1})-H^\textnormal{n.n}(\textbf{-1}),
\end{equation}
where $\Phi^\textnormal{n.n}(\textbf{-1},\textbf{+1})$ is the solution of  the variational problem in the path-space:
\begin{equation}
\label{minimax}
\Phi^\textnormal{n.n}(\textbf{-1},\textbf{+1}):=
\min_{\omega:\textbf{-1}\to\textbf{+1}}\max_{\eta\in\omega} H^\textnormal{n.n}(\eta),
\end{equation}
where $\omega$ denotes a general path of communicating configurations. The short-range Ising model has a nice continuity property of the Glauber dynamics which greatly simplifies the solution of this \emph{mini-max} problem (\ref{minimax}). 
 Given any $\sigma\in\mathcal{X}$, 
if we define: 
$$
\mathcal{V}_n:=\left\{\sigma\in\mathcal{X}:|\textnormal{Supp}(\sigma)|=n \right\},
$$
where $\textnormal{Supp}(\sigma):=\{x\in\Lambda: \sigma_x= +1\}$ is the set of sites with positive spins,
we have the following \emph{foliation} of the state space: $$\mathcal{X}=\bigcup_{n=0}^{|\Lambda|} \mathcal{V}_n.$$
It is clear that, by continuity of the dynamics, every path going from \textbf{-1} to \textbf{+1} has to cross each one of the sets $\mathcal{V}_n$. Hence, the knowledge of the energy minimizers in the foliation is of crucial importance for solving the \emph{mini-max} problem in Equation \eqref{minimax}. For the Ising short-range model one can find a minimizer in $\mathcal{V}_k$ communicating with one in $\mathcal{V}_{k+1}$, for any $k\leq |\Lambda|-1$. In other words one has to find in each foliation for a fixed area the shape of minimal perimeter. In fact,  the excitation energy 
 of a configuration $\sigma_n \in \mathcal{V}_n$ with respect to $\textbf{-1}$ for the short-range Ising model $$
  \Delta H^{\text{n.n}}(\sigma_n):=H^{\text{n.n}}(\sigma_n)-H^{\text{n.n}}(\textbf{-1}),$$ can be written in terms of the \emph{classical perimeter} of the cluster of pluses:
$$
\Delta H^{\text{n.n}}(\sigma_n)=2|per[\text{Supp}(\sigma_n)])|-2h|\text{Supp}(\sigma_n)|.
$$
In Figure~\ref{fig:short} we have reported the minimizers $\bar{\sigma}_n$ of $\Delta H^{\text{n.n}}$ on the foliation $\mathcal{V}_n$, with $n\in\{1,\ldots,100\}$.
The local minima in the figure are squares or rectangular configurations of pluses, while the local maxima (i.e., the \emph{saddles}) are configurations with a single protuberance. The configurations attaining the \emph{mini-max} are quasi-square configurations with one protuberance, of sides $l_c-1$ and $l_c$, where $l_c$ is the \emph{critical length} given by $l_c:=[2/h]+1$ (e.g., in the case of Figure~\ref{fig:short}, $l_c=5$).  

  \begin{figure}[htb!]
\begin{center}
    \includegraphics[scale=0.35]{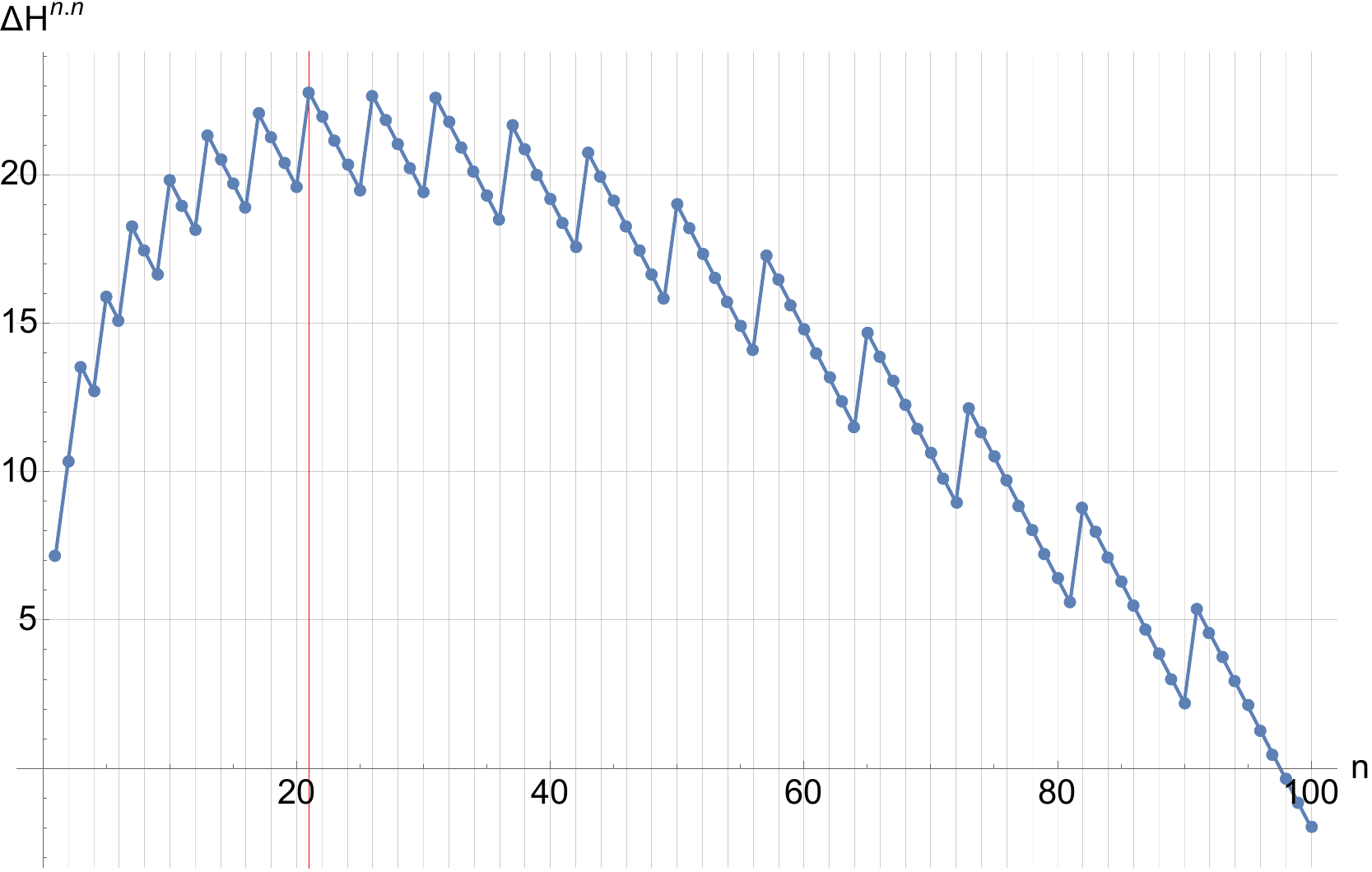} 
    \caption{Plot of  $\Delta H^{\text{n.n}}(\bar{\sigma}_n)$ vs. $n$, for the short-range 2d Ising model with $h=0.41$.}
 \label{fig:short}
\end{center}
    \end{figure}

\subsection{The metastability problem for the bi-axial long-range Ising model}
The metastability problem for a long-range model under Glauber dynamics with Metropolis weights has been studied rigorously for the first time in \cite{vanenter2019}, for a one-dimensional system. In \cite{vanenter2019} the nucleation process of the critical droplet of pluses, plunged in a see of minuses, has been indeed studied, showing that the long-range interaction changes substantially the nucleation process, since the corresponding short-range model does not present any metastable behavior.

Let us consider now the two-dimensional \emph{bi-axial} long-range Ising model with external magnetic field $h$ (see, for example, the model 3 in \cite{vanenter2019}). The Hamiltonian of the model is defined as follows: 

\begin{equation}\label{hamiltonian_function}
H^\lambda(\sigma)=-\sum_{x,y \in \Lambda,\atop{x \neq y}} J^\lambda_{x,y} \sigma_x \sigma_y -h \sum_{x \in \Lambda} \sigma_x,
\end{equation}
where $J^\lambda_{x,y}$ is defined as:
\begin{equation}
J^\lambda_{x,y}:= 
\begin{cases}
\frac{1}{|x_1-y_1|^{\lambda}} \qquad & \text{ if } x_2=y_2, x_1\neq y_1,  \notag \\
\frac{1}{|x_2-y_2|^{\lambda}} \qquad & \text{ if } x_1=y_1, x_2\neq y_2,  \notag \\
0 \qquad & \text{ otherwise,}
\end{cases}
\end{equation}
where $x=(x_1,x_2)\in \Lambda$,  $y=(y_1,y_2)\in \Lambda$, and  with $\lambda>1$. \\

The energy excitation of a configuration with respect to $\textbf{-1}$ is:
\begin{align}\label{def:delta_H}
  \Delta H^\lambda(\sigma):=H^\lambda(\sigma)-H^\lambda(\textbf{-1}).  
\end{align}
In particular, $\Delta H^\lambda(\sigma)$ can be written in terms of the nonlocal discrete perimeter $Per_{\lambda}$ defined in (\ref{def:per}):

\begin{equation}
\label{delta_per}
\Delta H^\lambda(\sigma)=2Per_{\lambda}(\mathcal{P}[\text{Supp}(\sigma)])-2h|\text{Supp}(\sigma)|+\mathcal{O}(L^{-(\lambda-1)}),
\end{equation}
where we denote with $\mathcal{P}[A]$ the minimal (by inclusion) polyomino containing the set $A\subset\Lambda$. The correction of order $\mathcal{O}(L^{-(\lambda-1)})$ comes from the fact that we consider the two dimensional torus of side $L$ instead of $\mathbb{Z}^2$. In order to quantify this correction, let us consider a unit square polyomino $Q(x)$ centered at the site $x$ belonging to the torus $\Lambda=\{1,\ldots,L\}^2$. Hence: 
\begin{align}
    Per_\lambda(Q(x))=
    \begin{cases}
        4 \sum_{r=1}^{\frac{L-1}{2}} \frac{1}{r^\lambda} & \text { if } L \text{ is odd,} \notag \\
        4 \sum_{r=1}^{\frac{L-2}{2}} \frac{1}{r^\lambda} +\frac{2}{(L/2)^\lambda} & \text { if } L \text{ is even,}
    \end{cases}
\end{align}
where we have used \eqref{contribution_unit_square}.
Thus, we analyze the difference with an infinite volume in case $L$ even, the other is similar. Note that
\begin{align}
    4 \zeta(\lambda)- 4 \sum_{r=1}^{\frac{L-2}{2}} \frac{1}{r^\lambda}=
    4 \zeta(\lambda, L/2) < 4 \int_{\frac{L}{2}}^\infty \frac{1}{r^\lambda} dr = 4 \frac{\lambda-1}{(L/2)^{\lambda-1}}. \nonumber
\end{align}

Therefore, for sufficiently large lattice side $L$ the correction in Equation (\ref{delta_per}) is negligible. The parameter $\lambda$ in Equation (\ref{delta_per}) tunes the strength of the long-range interaction. In particular, in the limit $\lambda\to\infty$, we recover the short-range two dimensional Ising model. We have indeed:
\begin{equation}
\label{e:limit_energy}
\lim_{\lambda\to\infty} H^\lambda(\sigma)= H^\textnormal{n.n}(\sigma),
\nonumber
\end{equation}
for any $\sigma\in\mathcal{X}$.
Our main result, contained in Theorem~\ref{main_theorem}, fully characterizes the configurations of minimal energy on a set $\mathcal{V}_n$ with a fixed number $n$ of positive spins. 
By Theorem~\ref{main_theorem}, it follows that the minimizers $\bar{\sigma}_n\in \text{argmin}_{\sigma \in \mathcal{V}_n} \Delta H^\lambda(\sigma)$ are such that $\textnormal{Supp}(\bar{\sigma}_n)\in \mathscr{M}_n\cap \mathbb{Z}^2$, with $\mathscr{M}_n$ defined in  \eqref{minimizers1}.
Hence, according to the value of $n$, $\bar{\sigma}_n$ are droplets of pluses with a shape 
of a square, square with protuberance, quasi-squares,  quasi-square with protuberance, rectangles or rectangles with protuberance.

In particular, we have that:
\begin{align}
    \text{Supp}(\bar{\sigma}_n)=
    \begin{cases}
        \mathscr{Q}_l \qquad & \text{ if } n=l^2, \notag \\
        \mathscr{Q}_l^{k_1} \text{ or } \mathscr{R}^{k_2}_{a,b}  & \text{ if } n=l^2+k_1 = ab+k_2, \, a+b + \textbf{1}_{k+2\neq 0}=2l+1, \notag \\
        \mathcal{Q}_l^{k_1}  & \text{ if } n=l^2+k_1 = ab+k_2, \, a+b + \textbf{1}_{k+2\neq 0}>2l+1, \notag \\
       \mathscr{R}_{l,l+1} & \text{ if } n=l(l+1), \notag \\
        \mathscr{R}^{k_1}_{l,l+1}  \text{ or } \mathscr{R}^{k_2}_{a,b} & \text{ if } n=l(l+1)+k_1 = ab+k_2, \, a+b + \textbf{1}_{k+2\neq 0}=2l+2, \notag \\
        \mathscr{R}^{k_1}_{l,l+1}   & \text{ if } n=l(l+1)+k_1 = ab+k_2, \, a+b + \textbf{1}_{k+2\neq 0} > 2l+2. \notag
    \end{cases}
\end{align}
Then, $\Delta H^\lambda(\sigma)$ defined in Equation \eqref{delta_per} is equal to, depending on the cases,
\begin{align}
\label{e:delta}
    \Delta H^\lambda(\bar{\sigma}_n) = 
    \begin{cases}
8 l \sum_{i=1}^{l} \zeta(\lambda,i) -2hn & \text{ if } n=l^2, \\
8l \sum_{i=1}^{l} \zeta(\lambda,i) + 4\sum_{i=1}^l \zeta(\lambda,i) + 4l\zeta(\lambda,l+1) -2hn & \text{ if } n=l(l+1). 
\end{cases}
\end{align}
If $n=l^2+ k_1$ and $a+b +  \textbf{1}_{k_2\neq 0} =2l+ 1, \, ab + k_2 = l^2+k_1$, where $k_1\in \{1,\dots,l-1\}$ and $k_2\in \{0,\dots,a-1\}$.
\begin{align}
    \Delta H^\lambda(\bar{\sigma}_n)=\min \biggl \{ &8 l \sum_{i=1}^{l} \zeta(\lambda,i) +4k_1 \zeta(\lambda,l+1) +4\sum_{i=1}^{k_1}\zeta(\lambda,i) -2hn, \\
&4a \sum_{i=1}^{b} \zeta(\lambda,i) + 4b\sum_{i=1}^a \zeta(\lambda,i) +4k_2 \zeta(\lambda,b+1) +4\sum_{i=1}^{k_2}\zeta(\lambda,i) -2hn \biggr \}. 
\end{align}
If $n= l(l+1) +k_1$ and $a+b +\textbf{1}_{k_2\neq 0}=2l+ 2 $ , $ab + k_2=l(l+1)+k_1$, where $k_1\in \{1,\dots,l\}$ and $k_2\in \{0,\dots,a-1\}$
\begin{align}
 \Delta H^\lambda(\bar{\sigma}_n)=\min \biggl \{&8l \sum_{i=1}^{l} \zeta(\lambda,i) + 4\sum_{i=1}^{l} \zeta(\lambda,i) + 4l\zeta(\lambda,l+1) + 4k_1 \zeta(\lambda, l+2) +4\sum_{i=1}^{k_1}\zeta(\lambda,i) -2hn, \\
&4a \sum_{i=1}^{b} \zeta(\lambda,i) + 4b\sum_{i=1}^a \zeta(\lambda,i) +4k_2 \zeta(\lambda,b+1) +4\sum_{i=1}^{k_2}\zeta(\lambda,i) -2hn \biggr \}.   
\end{align}
In Figure~\ref{fig:long_2_4}
we have reported the energy excitation  $\Delta H^\lambda(\bar{\sigma}_n)$ for the minimizers $\bar{\sigma}_n$ on $\mathcal{V}_n$ as function of the number of positive spins $n$, for a particular value of the external magnetic field and of the parameter $\lambda$ (e.g., $h=0.41,\lambda=2.4$). The  vertical red line denotes the number $n_c$ corresponding to the minimizer $\bar{\sigma}_{n_c}$ with the maximal energy. Therefore, in analogy with the short-range model, we expect  the configuration $\bar{\sigma}_{n_c}$ to be the \emph{critical droplet} of the nucleation process, i.e., the configuration attaining the mini-max:
$$
\Phi^{\lambda}(\textbf{-1},\textbf{+1}):=
\min_{\omega:\textbf{-1}\to\textbf{+1}}\max_{\eta\in\omega} H^{\lambda}(\eta),
$$
with $\lambda=2.4$.
However, the differences with the short range model are evident, with an increment of the dimension $l_c$ of the critical configuration (\emph{critical length}). 
As expected, the critical length  for a fixed value of the magnetic field is a decreasing function of parameter $\lambda$, as we see by comparing Figure~\ref{fig:long_1_8} with Figure~\ref{fig:long_2_4}: we can see indeed that $l_c(\lambda=2.4;h=0.41)=14<l_c(\lambda=1.8;h=0.41)=62$.
Furthermore, in Figure~\ref{surface} we have plotted the surface of $\Delta H^\lambda(\bar{\sigma}_n)$ as a function of the two parametere $\lambda$ and $n$, for a given value of the magnetic field $h=0.41$. The impact of the long-range interaction is evident for values of $\lambda$ smaller than 3.

\begin{figure}[htb]
\begin{center}
\includegraphics[scale=0.4]{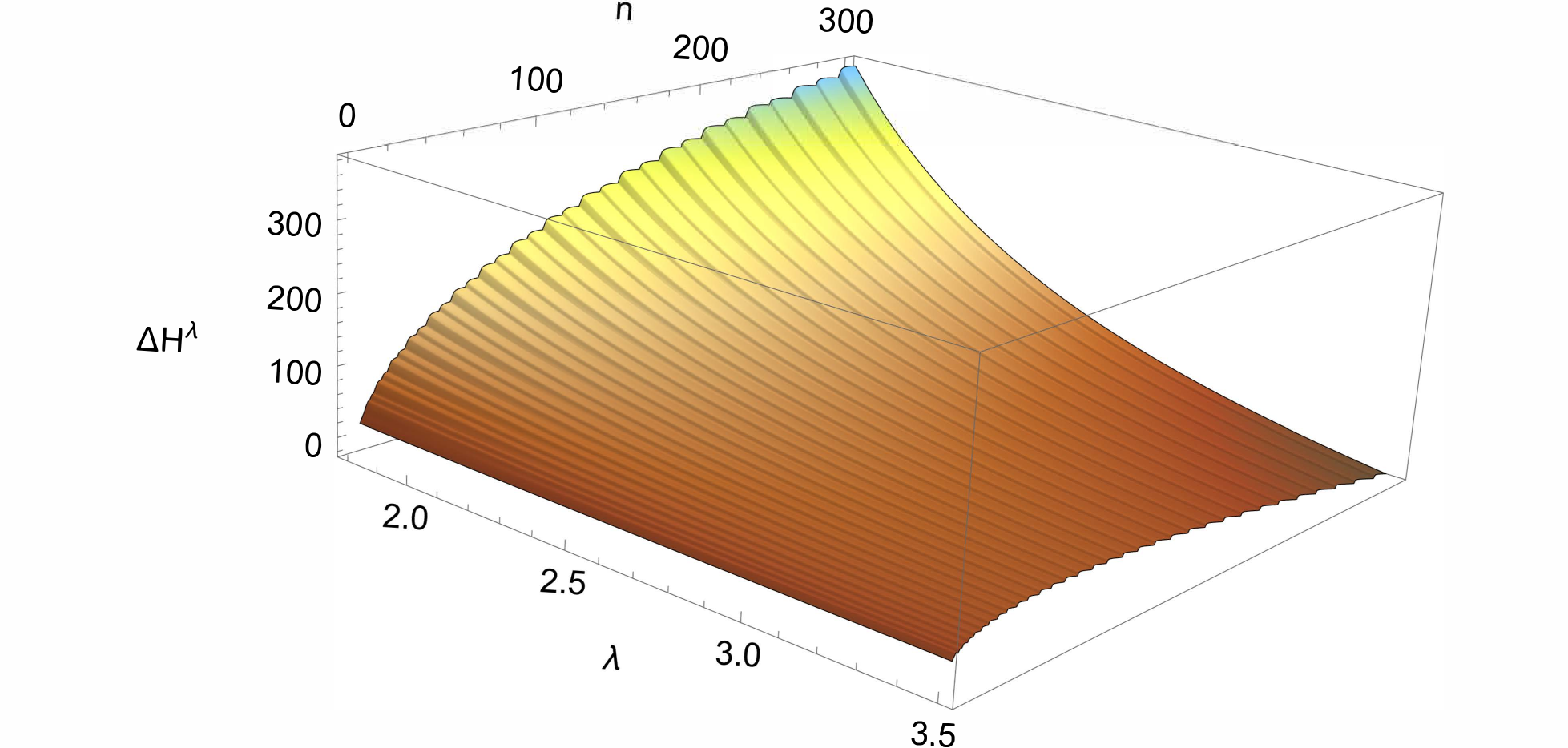} 
    \caption{Surface plot of $\Delta H^\lambda(\bar{\sigma}_n)$ for $h=0.41$.}
 \label{surface}
\end{center}
\end{figure}

\begin{figure}[htb]
\begin{center}
    \includegraphics[scale=0.35]{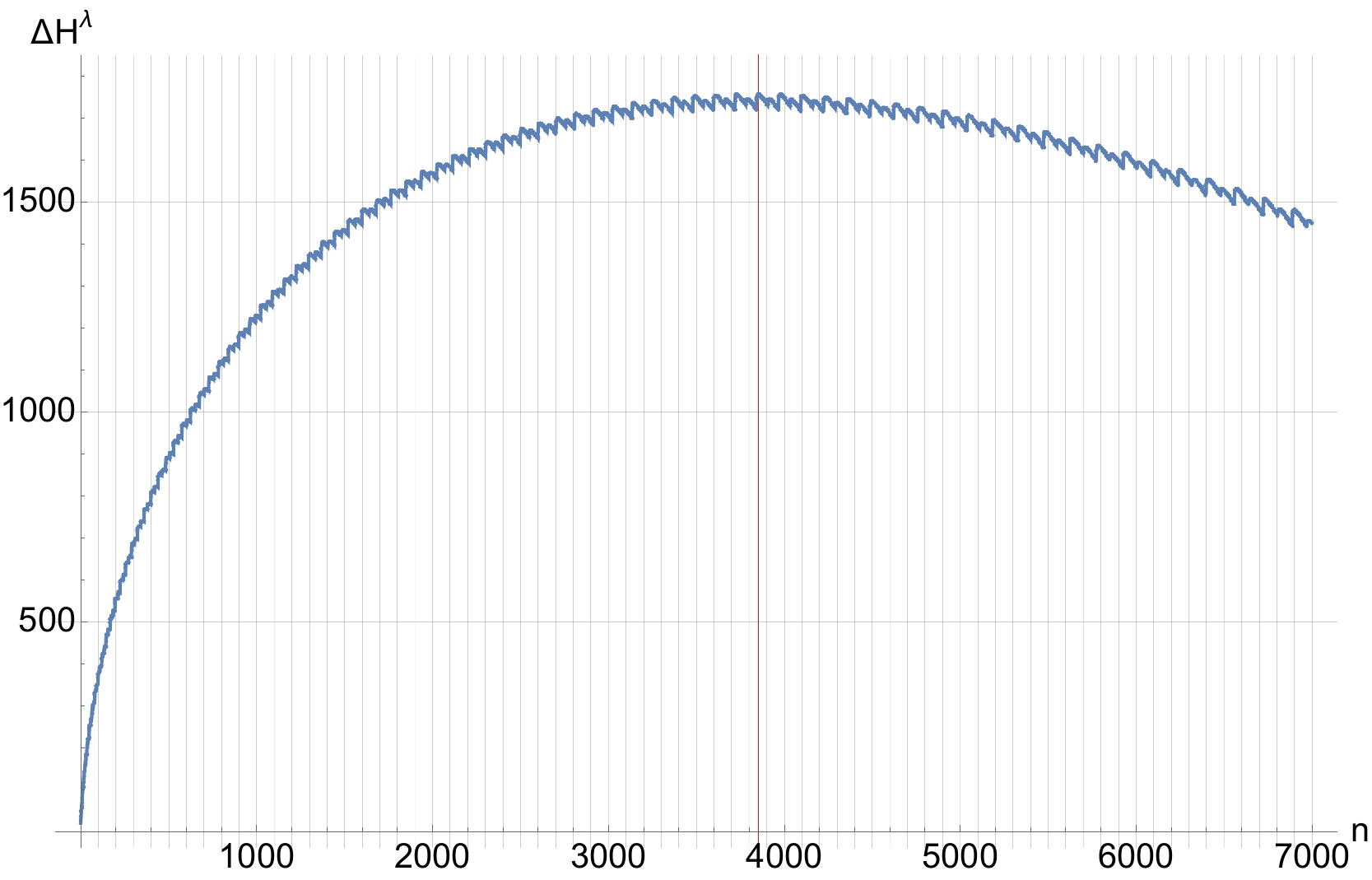} 
    \caption{ Plot of  $\Delta H^\lambda(\bar{\sigma}_n)$ vs. $n$, for the long-range bi-axial 2d Ising model with $h=0.41$, $\lambda=1.8$.}
 \label{fig:long_1_8}
\end{center}
    \end{figure}

\begin{figure}[htb]
\begin{center}
    \includegraphics[scale=0.35]{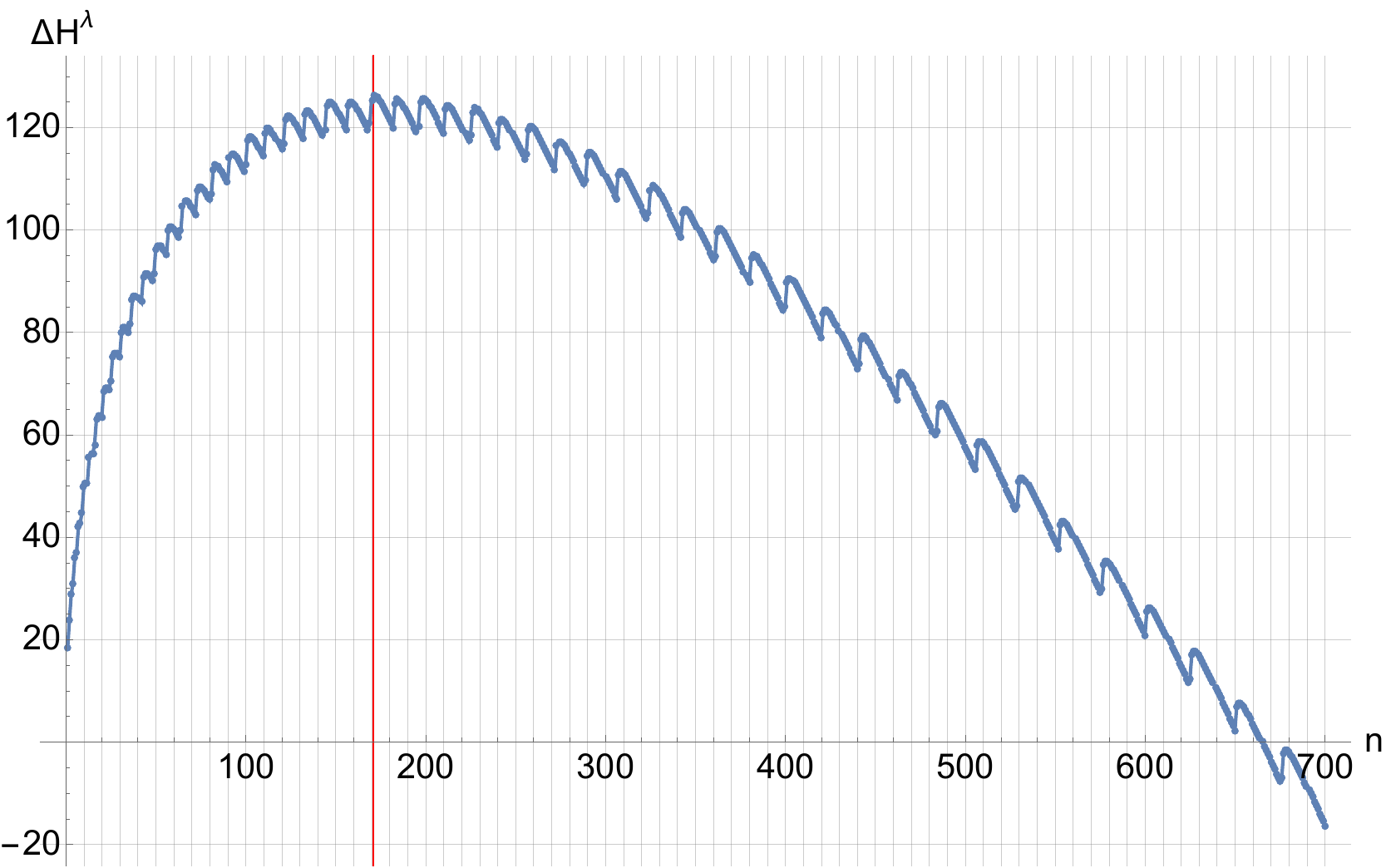} 
    \caption{ Plot of  $\Delta H^\lambda(\bar{\sigma}_n)$ vs. $n$, for the long-range bi-axial 2d Ising model with $h=0.41$, $\lambda=2.4$.}
 \label{fig:long_2_4}
\end{center}
    \end{figure}

Despite in the short-range Ising model the minimizers on the foliations $\mathcal{V}_n$ and $\mathcal{V}_{n+1}$ can be chosen to be communicating configurations, this is not the general case for the long-range bi-axial model. In fact, in our model the protuberance grows on the smaller side, differently from the short-range Ising model. In Figure~\ref{fig:square32} on the left there is the energy minimizer $\hat{\sigma}_{32}$ in the foliation $\mathcal{V}_{32}$, i.e. a  quasi-square of side $l=5$. On the right, we see the configuration $\eta$, i.e., a  quasi-square with a double protuberance on the shorter side. We have that $H^\lambda(\eta)-H^\lambda(\bar{\sigma}_{32})=1/l^\lambda -1/(l+1)^\lambda>0$. As expected, this difference tends to zero as $\lambda\to\infty$, since in the short-range Ising model these two configurations have the same energy.
\begin{figure}[hbt]
\begin{center}
\includegraphics[scale=0.32]{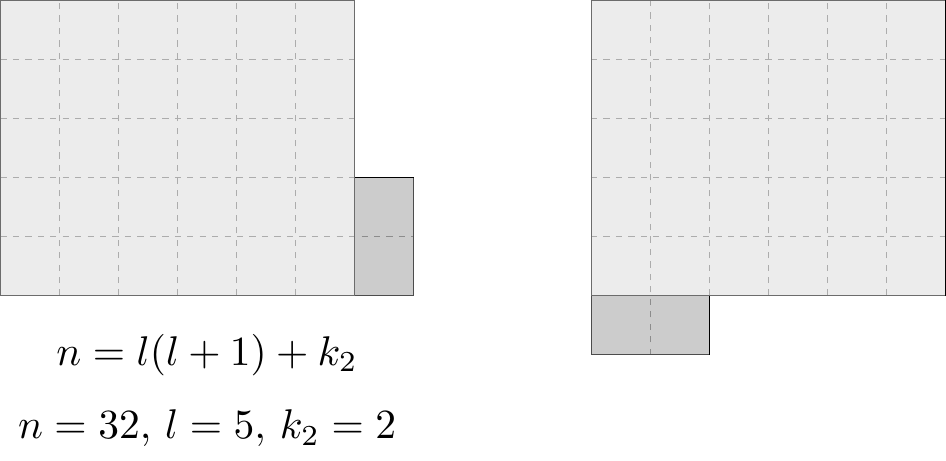} 
    \caption{Anisotropy in the biaxial model: quasi-squares configurations with $l=5$. On the left the energy minimizer $\hat{\sigma}_{32}$ in $\mathcal{V}_{32}$, and on the right a configuration $\eta\in\mathcal{V}_{32}$ with $H^\lambda(\eta)>H^\lambda(\bar{\sigma}_{32}$). }
 \label{fig:square32}
\end{center}
    \end{figure}

\subsection{Critical length of the square droplets}
\label{s:critical_length}
By Equation \eqref{e:delta}, if we restrict our attention to the case $n=l^2$, then the minimizer of the nonlocal discrete perimeter is a square droplet of size $l$, so that:
\begin{align}
\label{e:square}
    f^\lambda_h(l):=\Delta H^\lambda(\bar{\sigma}_{l^2} ) 
    &=-2hl^2 +8l^2 \zeta(\lambda, l) +8l \sum_{j=1}^{l-1} \frac{1}{j^{\lambda -1}}. 
\end{align}
We can look at the maximum in $l$ of the function $f^\lambda_h(l)$ defined in Equation~ \eqref{e:square}, in order to have some information about the scaling of the critical length of the minimal saddle configuration of the first excursion from \textbf{-1} to \textbf{+1}.With an abuse of language, we also call this global maximum \emph{critical length}.  
In Figure~\ref{f:lambda_crit} the critical length $l_c$ are reported as function of the exponent $\lambda$ and of the external magnetic field $h$.
\begin{figure}[htb]
\begin{center}
\includegraphics[scale=0.5]{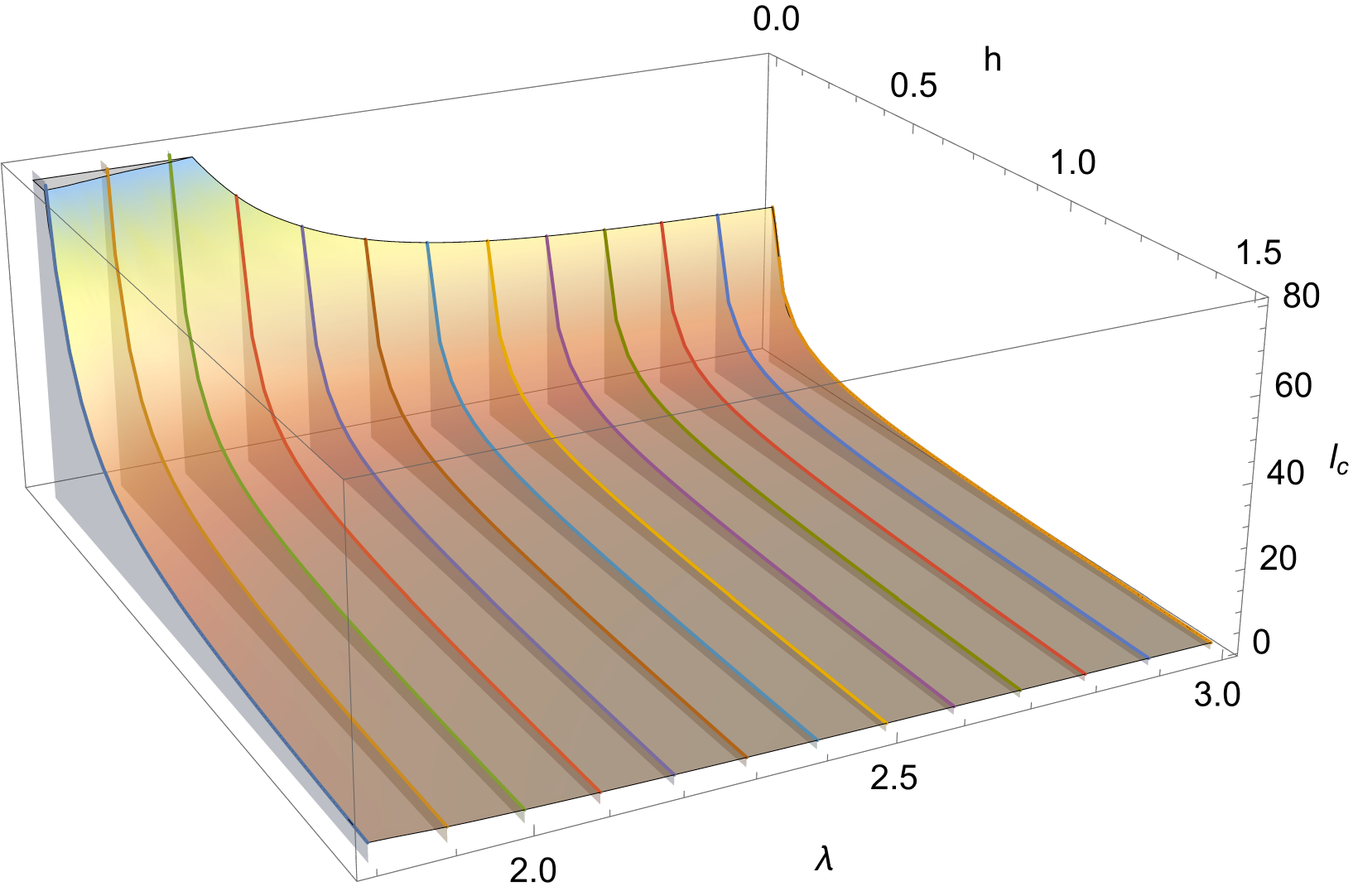} 
    \caption{Surface plot of the $l_c$ as function of $h$ and $\lambda$.}
 \label{f:lambda_crit}
\end{center}
\end{figure}

 Obtaining the scaling of $l_c$ with respect to $\lambda$ and $h$ is not analytically feasible, so that the critical length $l_c$ can only be estimated numerically. In order to visualize the effect of the long-range, we can derive the second order derivative of $f^\lambda_h$ with respect to $l$.
  We know indeed that for the short-range Ising model $$\frac{\partial^2}{\partial l^2}H^{\textnormal{n.n}}(\bar{\sigma}_n)=-4h,
$$
so that, for a fixed value of $h$, we expect that a plateau of the second order derivative will indicate a short-range type of behavior of the long-range model.\\
By recalling the definition of the Hurwitz-zeta function given in Equation \eqref{e:zeta}, we have that $\zeta(\lambda,1)=\zeta(\lambda)$, where we denoted with $\zeta(\lambda)$ the Riemann zeta function. Moreover, we have that:
\begin{equation}
\label{e:hur_first}
\frac{\partial}{\partial l}\zeta(\lambda,l)=-\lambda \zeta(\lambda+1,l).
\end{equation}
 By \eqref{eq:forgen}, $f^\lambda_h$ can be rewritten as:
\begin{equation}
    \label{e:frew}
    f^\lambda_h(l)=-2 h l^2 +8l^2 \zeta(\lambda,l) +8l (\zeta(\lambda-1)-\zeta(\lambda-1,l),
\end{equation}
for  any $\lambda>2$.
Its partial derivative with respect to $l$ can be written as:
\begin{eqnarray*}
\frac{\partial}{\partial l}
f^\lambda_h(l)&=&-4hl+16l \zeta(\lambda,l)-8l^2\lambda \zeta(\lambda+1,l)+8\left(\zeta(\lambda-1)-\zeta(\lambda-1,l)  \right)+8(\lambda-1)l \zeta(\lambda,l) \nonumber\\
&=&-4hl-8l^2\lambda \zeta(\lambda+1,l)+ 8 l (\lambda+1) \zeta(\lambda,l)- 8\zeta(\lambda-1,l)+8\zeta(\lambda-1).
\end{eqnarray*}
Analogously, we calculate the second order partial derivative of $f^\lambda_h$ with respect to $l$:
\begin{eqnarray}
\label{e:der2}
\frac{\partial^2}{\partial l^2}
f^\lambda_h(l)&=&-4h-8l\lambda\left(\lambda +3\right) \zeta(\lambda+1,l) +8\lambda^2 l^2   \zeta(\lambda+2,l) +8(\lambda+2) \zeta(\lambda,l).
\end{eqnarray}
In Figure~\ref{f:second_order} we reported the surface plot of the second order derivative of $f^\lambda_h$ with respect to $l$, with $h=0.4$, and for $\lambda\in[2,4]$. We can see that in this parameter space $\frac{\partial^2}{\partial l^2}
f^\lambda_h(l)<0$. In particular, the flat plateau with height $-1.6$ corresponds to a behavior similar to the short-range Ising model, for which:
$$\frac{\partial^2}{\partial l^2}H^{\textnormal{n.n}}(\bar{\sigma}_n)=-4h=-1.6.
$$
 Only in the region $\lambda<3$ the effect of the nonlocal interaction is pronounced.
\begin{figure}[htb!]
\begin{center}
\includegraphics[scale=0.5]{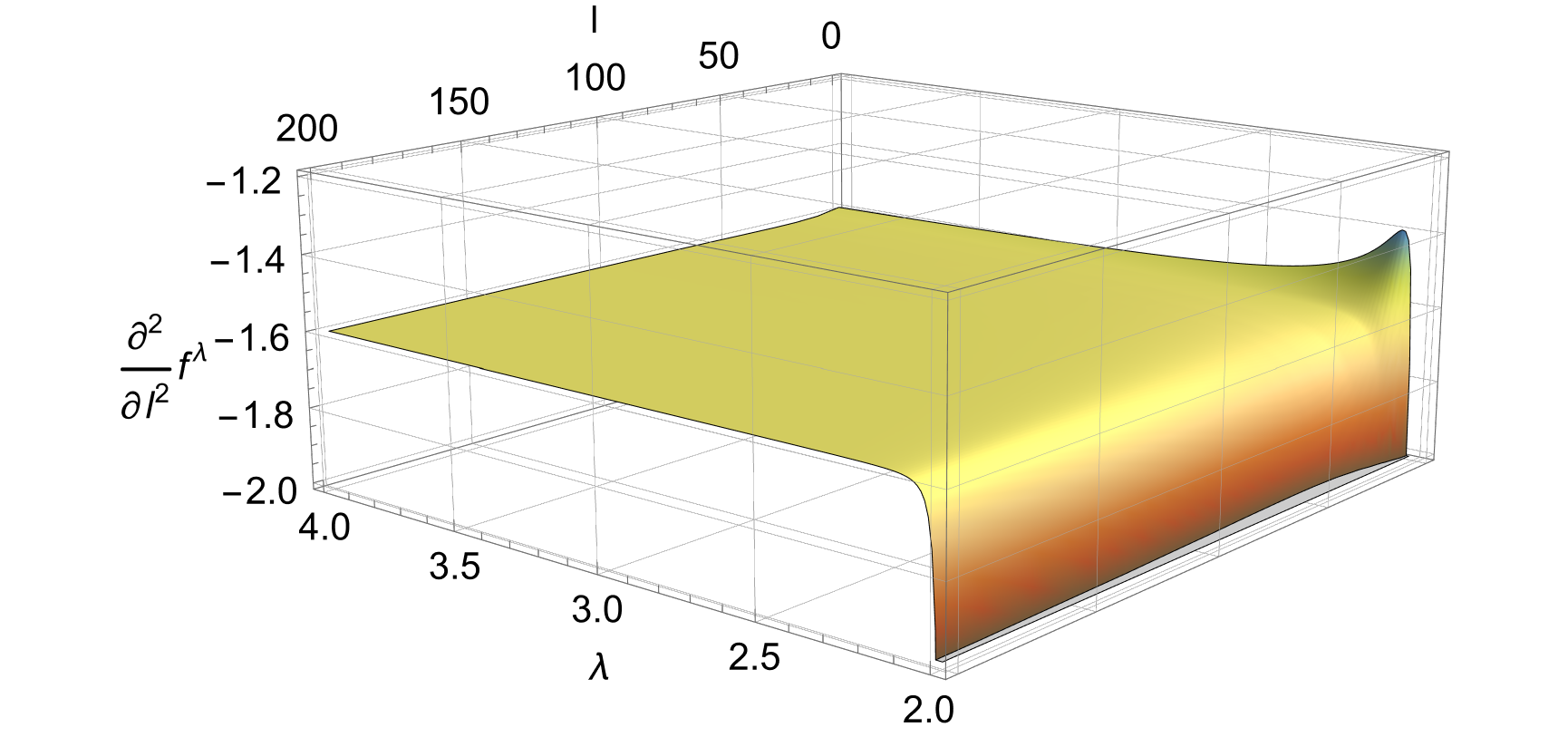} 
    \caption{Surface plot of $\frac{\partial^2}{\partial l^2}
f^\lambda_h(l)$ as function of $l$ and $\lambda$, with $h=0.4$.}
 \label{f:second_order}
\end{center}
\end{figure}

\subsection{Rigorous results for the metastable behaviour}\label{sec:rigorous} 

To identify $\textbf{-1}$ as the unique metastable state and to obtain an estimate of the transition time between $\textbf{-1}$ and the stable state $\textbf{+1}$, it is necessary to determine the maximal energy barrier separating $\textbf{-1}$ from $\textbf{+1}$ and to prove that any other configuration has stability level strictly smaller than $\textbf{-1}$ and $\textbf{+1}$. 

Formally, one should prove the corresponding result for the one-dimensional
long-range Ising model, i.e., Theorem~3.1 in~\cite{vanenter2019}, in our setting. In
this way, we identify the metastable state $\textbf{-1}$ and then, following
\cite{manzo2004}, the transition time can be estimated as
\begin{equation}\label{eq:ET}
\lim_{\beta\to\infty} \frac{1}{\beta}\ln(\mathbb{E}_{\textbf{-1}}(\tau_+))=\Phi^\lambda(\textbf{-1},\textbf{+1})-H^\lambda(\textbf{-1}),
\end{equation}
with 
$$
\Phi^{\lambda}(\textbf{-1},\textbf{+1}):=
\min_{\omega:\textbf{-1}\to\textbf{+1}}\max_{\eta\in\omega} H^{\lambda}(\eta).
$$
In particular, in order to prove (\ref{eq:ET}) we need to solve highly non trivial
\emph{model-dependent} problems, i.e., the following restatement of Theorem~3.1 in \cite{vanenter2019}:
\begin{theorem}\label{t:3.1}
We have:
\begin{enumerate}
  \item $\Phi^\lambda(\mathbf{-1},\mathbf{+1}) = \Gamma + H^{\lambda}(\mathbf{-1}),$
  \item $V_{\mathbf{-1}} = \Gamma > 0,$ and
  \item $V_{\sigma} < \Gamma \quad \text{for any } \sigma \in \mathcal{X} \setminus \{\mathbf{-1},\mathbf{+1}\},$
\end{enumerate}
where $\Gamma:=\sup_{\sigma\in\mathcal{X}\setminus\mathbf{+1}} V_\sigma$ and $V_\sigma$ is the \emph{stability level} of the configuration $\sigma$ defined as
$V_\sigma := \Phi^\lambda(\sigma,\mathcal{I}_\sigma) - H^{\lambda}(\sigma) \ge 0$, 
with $\mathcal{I}_\sigma$ the set of all states $\eta\in\mathcal{X}$  such that
$H^{\lambda}(\eta) < H^{\lambda}(\sigma)$.
\end{theorem}

The determination of the min--max energy
barrier $\Phi^\lambda(\mathbf{-1},\mathbf{+1})$ relies on the identification of the maximal energy along optimal paths, which in turn depends on the precise description of the minimal-energy configurations within each foliation of fixed magnetization. The solution of this variational problems  corresponds to point~(1) of Theorem~\ref{t:3.1}. 
In the present two-dimensional setting, Theorem~\ref{main_theorem} identifies, through the solution of the discrete isoperimetric problem, the relevant minimal-energy representatives of each magnetization level and thus the natural candidates for the critical saddle configuration. 
This provides a lower bound for $\Phi^\lambda(\mathbf{-1},\mathbf{+1})$, but in order to obtain a matching upper bound, one needs to exhibit a reference path starting from 
$\textbf{-1}$ and reaching $\textbf{+1}$, not overcoming the energy $\Gamma$. The description of such a path, as a sequence of configurations differing by a single spin (in accordance with the Glauber dynamics), depends heavily on the choice of $\lambda$, which determines the shape of the clusters in the critical configuration. For certain values of $\lambda$, this can generate a large number of possible configuration sequences to analyze and the optimal path may not pass through the foliation of minimal-energy saddle configurations. 
This makes it extremely difficult to solve $\Phi^\lambda(\mathbf{-1},\mathbf{+1})$.

Moreover, to complete the study of metastability, one must verify points (2) and (3) of Theorem~\ref{t:3.1}, which require showing that the stability level of the metastable state is maximal and precisely equal to $\Phi^\lambda(\mathbf{-1},\mathbf{+1})-H^\lambda(\mathbf{-1})$. These properties are known in the literature as \emph{recurrence properties}. The proof of Theorem~\ref{main_theorem} already provides a key ingredient for the recurrence step: it gives an explicit procedure to associate with any polyomino not belonging to the extremal set $\mathscr{M}_n$ a polyomino of the same area but with a strictly smaller nonlocal perimeter (see Figure~\ref{fig:diagramm}). In the language of the bi-axial long-range Ising model, this corresponds to a recurrence property for all configurations outside $\mathscr{M}_n$: for any configuration $\sigma$ containing a cluster whose shape differs from those in $\mathscr{M}_n$ we have that $V_\sigma< \Phi^\lambda(\mathbf{-1},\mathbf{+1})-H^\lambda(\sigma)$. What remains to be proven, therefore, is a recurrence property for configurations containing clusters with the shape of the polyominoes in $\mathscr{M}_n$, as well as the construction of a reference path to establish that the stability level of $\mathbf{-1}$ is indeed $\Phi^\lambda(\mathbf{-1},\mathbf{+1})-H^\lambda(\sigma)$. The proof of the recurrence property for $\mathscr{M}_n$ is standard: one expects that small rectangles (or squares) of minus spins tend to shrink, while large rectangles tend to grow. On the other hand, the construction of the reference path, as mentioned before, remains genuinely difficult.

\section{Conclusion and future directions}
\label{s:conclusion}
In the present manuscript we have addressed and solved for the first time a nonlocal discrete isoperimetric problem. We considered indeed what we have called a \emph{nonlocal bi-axial discrete perimeter}
for a general two-dimensional polyomino $\mathcal{P}$, and we have then found and characterized its minimizers in the class of polyominoes with fixed area. A first natural generalization of our results is to consider different nonlocal kernels (e.g., Euclidean distance with power-law/ exponential decay) or other geometries (three-dimensional lattices or more general graph structures). 

Moreover, in our article we explained  how the solution of the nonlocal discrete isoperimetric problem is related to the rigorous study of the metastable behavior of a \emph{long-range bi-axial Ising model}. The next step is to rigorously solve the metastability problem for this model. 
We remark that our algorithmic proof of Proposition~\ref{prop:no_rectangle} can already be used for proving  a recurrence property of the process on
the set of states at given stability level, in order to control the tail of the tunneling time (see for instance Theorem~3.1 in \cite{manzo2004}).
Thanks to Proposition~\ref{prop:no_rectangle} we were able indeed to associate to any configuration not in $\mathscr{M}^{\text{ext}}_n$ a configuration in $\mathscr{M}_n^{\text{ext}}$ with smaller nonlocal perimeter and with the same area. In the language of the bi-axial long range Ising model, this corresponds to associate to any configuration not belonging to the extremal set, an extremal configuration, with lower energy and same magnetization. 
With the remaining propositions we associated to any configuration in $\mathscr{M}_n^{\text{ext}}$ a configuration in $\mathscr{M}_n$ with strictly smaller nonlocal perimeter and with the same area.
The next challenge will be the characterization of the critical droplets and of the optimal nucleation path. 

\begin{appendix}\label{s:appendix}
\section{Elementary relations}
 Let $A,B\geq 1, B\geq A$ be positive integer number, then 
\begin{equation}\label{eq:forgen}
\sum_{i=A}^B \zeta(\lambda, i) = (B-(A-1))\zeta(\lambda) - \sum_{k=1}^{B-1} \frac{B-k}{k^\lambda} + \sum_{k=1}^{A-2}\frac{(A-1) -k}{k^{\lambda}}.
\end{equation}
In particular, for $A=1$ we find
\begin{equation}\label{eq:forA=1}
\sum_{i=1}^B \zeta(\lambda, i)  = B\zeta(\lambda) - \sum_{k=1}^{B-1} \frac{B-k}{k^\lambda}. 
\end{equation}
Note that for every $C\geq 1$ we have trivially that
\begin{equation}\label{eq:boundsum}
\sum_{i=A+C}^{B+C} \zeta(\lambda, i)  = \sum_{i=A}^B \zeta(\lambda, i)  - \sum_{i=A}^{B} \sum_{k=i}^{C-1+i} \frac{1}{k^{\lambda}}.
\end{equation}
Throughout the manuscript, when we write $\sum_{r=x}^y f(r)$ with $y<x$, it means that the sum is equal to 0.\\


\section{Proof of Proposition \ref{prop:rectangle_quasisquare}}
\label{proof:prop3.3}
Assume $n=l(l+1)$.
Suppose without loss of generality that $a$ and $l$ (resp. $b$ and $l+1$) are the lengths of the vertical (resp. horizontal) sides of $\mathscr{R}_{a,b}$ and $\mathscr{R}_{l,l+1}$. By Equation \eqref{eq:contribution_strips}, we have 
\begin{align}\label{eq:quasi_square_rectangle_GAP}
   &Per_{\lambda}(\mathscr{R}_{l,l+1})=2l \sum_{i=1}^{l+1} \zeta(\lambda, i)  +2(l+1) \sum_{i=1}^l \zeta(\lambda, i)   
   \,\,\,\,\, \text{ and } \,\,\,\,\,
   Per_{\lambda}(\mathscr{R}_{a,b})=2a \sum_{i=1}^b \zeta(\lambda, i)   +2b \sum_{i=1}^a \zeta(\lambda, i). 
\end{align}
By \eqref{eq:forA=1} and \eqref{eq:quasi_square_rectangle_GAP}, we have
\begin{align}
  &\frac{Per_{\lambda}(\mathscr{R}_{l,l+1})}{2}=2l(l+1)\zeta(\lambda) - l\sum_{k=1}^{l} \frac{l+1-k}{k^\lambda} - (l+1)\sum_{k=1}^{l-1} \frac{l-k}{k^\lambda}, \label{gapquasisquarell+1} \\
  &\frac{Per_{\lambda}(\mathscr{R}_{a,b})}{2}=2ab\zeta(\lambda) - a\sum_{k=1}^{b-1} \frac{b-k}{k^\lambda} - b\sum_{k=1}^{a-1} \frac{a-k}{k^\lambda}. \label{rectanglell+1}
\end{align}
Thus, recalling $l(l+1)=ab$, we obtain
\begin{align}\label{deltall+1}
\Delta := \frac{Per_{\lambda}(\mathscr{R}_{a,b})-Per_{\lambda}(\mathscr{Q}_l)}{2}&
=l\sum_{k=1}^{l} \frac{l+1-k}{k^\lambda} + (l+1)\sum_{k=1}^{l-1} \frac{l-k}{k^\lambda}
-a \sum_{k=1}^{b-1} \frac{b-k}{k^{\lambda}} -b \sum_{k=1}^{a-1} \frac{a-k}{k^{\lambda}} .
\end{align}
We will first treat the cases $l=2,3$ by hand and assume in the following that $l\geq 4$. Let first $l=2$, then
\[
\Delta = 2 \left( 2 + \frac{1}{2^{\lambda}} \right) + 3 - \left ( 5+ \frac{4}{2^\lambda} + \frac{3}{3^\lambda} + \frac{2}{4^\lambda} +\frac{1}{5^\lambda} \right )= 2-\frac{2}{2^{\lambda}} -\frac{3}{3^{\lambda}} -\frac{2}{4^{\lambda}}-\frac{1}{5^\lambda}> 0
\]
for each $\lambda\geq 1.3$. When $l=3$ we have either $a=1$ and $b=12$ or $a=2$ and $b=6$. We compute $\Delta$ in the first case
\[
\Delta = 6 - \frac{6}{3^\lambda} - \left ( \frac{8}{4^\lambda} + \frac{7}{5^\lambda} + \frac{6}{6^\lambda} 
+ \frac{5}{7^\lambda} + \frac{4}{8^\lambda} + \frac{3}{9^\lambda} + \frac{2}{10^\lambda} + \frac{1}{11^\lambda}\right )
> 6 - \frac{6}{3^\lambda} - \frac{36}{4^\lambda} >0
\]
for $\lambda \geq 1.5$. In the second case, we have
\[
\Delta = 11 + \frac{2}{2^\lambda} - \frac{3}{3^\lambda} - \frac{4}{4^\lambda} - \frac{2}{5^\lambda}>0
\]
for each $\lambda>0$. 

For the general case, recalling that $b=\frac{l(l+1)}{a}$, it is easy to see that we can write the difference $\Delta$ as the sum of two functions $f_1(a,l)$ and $f_2(a,l,b)$, i.e. 
\begin{equation*}
\Delta = f_1(a,l) + f_2(a,l,b),
\end{equation*}
where 
\begin{align}
&f_1(a,l)=\left (a+\frac{l(l+1)}{a}-2l-1 \right )\sum_{k=1}^{a-1} \frac{1}{k^{\lambda-1}}
-(l-a)\sum_{k=a}^{l} \frac{1}{k^{\lambda-1}} 
-(l+1-a)\sum_{k=a}^{l-1} \frac{1}{k^{\lambda-1}}
+\frac{l(l+1)-a^2}{a^\lambda} \label{def_f1ll+1}, \\
&f_2(a,l,b)=\sum_{k=a+1}^{l} \frac{l(l+1)-ak}{k^{\lambda}}-\sum_{k=l}^{b-1} \frac{l(l+1)-ak}{k^{\lambda}}. \label{deff2ll+1}
\end{align}
We conclude the claim once we showed that both functions are strictly positive.

\noindent
\textbf{Step 1:}
We will first show that $l \mapsto f_1(a,l)$ is increasing in $l$ and in particular that $f_1(a,l)>0$. Let us write again:
\begin{equation}\label{def_rest_ll+1}
\begin{split}
f_1(a,l+1) &=  f_1(a,l) + \left (\frac{2(l-a)+2}{a} \right ) \sum_{k=1}^{a-1} \frac{1}{k^{\lambda-1}} 
-(l-a+1) \left ( \frac{1}{l^{\lambda-1}} + \frac{1}{(l+1)^{\lambda-1}} \right ) -2\sum_{k=a}^l \frac{1}{k^{\lambda-1}}+ \frac{2(l+1)}{a^\lambda} \\
& := f_1(a,l) + rest(a,l).
\end{split}
\end{equation}
The function is increasing in $l$ if the rest-term $rest(a,l)>0$ for all $l \geq \max \{a + 1, 4\}$. 
We compute further
\begin{equation*}
\begin{split}
rest(a,l+1) - rest(a,l) =& \frac{2}{a}\sum_{k=1}^{a}\frac{1}{k^{\lambda-1}}
-\left ( \frac{3}{(l+1)^{\lambda-1}} +\frac{1}{(l+2)^{\lambda-1}} \right )
+ (l-a+1) \left ( \frac{1}{l^{\lambda-1}} - \frac{1}{(l+2)^{\lambda-1}} \right ). 
\end{split}
\end{equation*}
In easy to see that the last expression is positive for $a \in \{1,2,3,4\}$ and $l \geq \max \{a + 1, 4\}$. Thus, suppose $a\geq 5$ and observe that $\max \{a + 1, 4\}=a+1$ and
\begin{align*}
&\frac{2}{a}\sum_{k=1}^{a}\frac{1}{k^{\lambda-1}}
-\left ( \frac{3}{(l+1)^{\lambda-1}} +\frac{1}{(l+2)^{\lambda-1}} \right )
+  \left ( \frac{(l-a+1)}{l^{\lambda-1}} - \frac{(l-a+1)}{(l+1)^{\lambda-1}} \right )+  \left ( \frac{(l-a+1)}{(l+1)^{\lambda-1}} - \frac{(l-a+1)}{(l+2)^{\lambda-1}} \right )\notag \\
&> \frac{2}{a}\sum_{k=1}^{a}\frac{1}{k^{\lambda-1}}
-\left ( \frac{3}{(l+1)^{\lambda-1}} +\frac{1}{(l+2)^{\lambda-1}} \right )
+ 2(l-a+1) \left ( \frac{1}{(l+1)^{\lambda-1}} - \frac{1}{(l+2)^{\lambda-1}} \right )\notag \\
& \geq   \frac{2}{a}\sum_{k=1}^{a}\frac{1}{k^{\lambda-1}} -\frac{4}{(l+1)^{\lambda-1}} 
\geq \frac{1}{2}\sum_{k=1}^{4}\frac{1}{k^{\lambda-1}} - \frac{4}{6^{\lambda-1}}>0,
\end{align*}
since $l \geq a+1$, $a \geq 4$ and $\lambda>1.8$. We have proven that $rest(a,l)$ is an increasing function of $l$ for each fixed $a$ and $l \geq a + 1$. It remains to show that $rest(a, \max \{a + 1,4 \}) > 0$. For $a \geq 4$, we have $\max \{a + 1,4 \}=a+1$ and
\begin{equation}\label{eq:wasch}
\begin{split}
rest(a, a + 1) &= \frac{4}{a} \sum_{k=1}^{a-1} \frac{1}{k^{\lambda-1}} 
-\frac{4}{(a+1)^{\lambda-1}} -\frac{2}{(a+2)^{\lambda-1}} + \frac{4}{a^\lambda},
\end{split}
\end{equation}
which is greater than
\begin{equation}\label{eq:wasch1}
\eqref{eq:wasch}\ > \frac{4}{a} \sum_{k=1}^{a-1} \frac{1}{k^{\lambda-1}} - \frac{6a+4}{(a+1)^{\lambda}} .
\end{equation}
Lower bounding \eqref{eq:wasch1} by

\begin{equation*}
\sum_{k=1}^{a-1} \frac{1}{k^{\lambda-1}}-\frac{3a^2+2a}{2(a+1)^{\lambda}} \geq \int_{1}^{a} \frac{1}{k^{\lambda-1}} dk-\frac{3a^2+2a}{2(a+1)^{\lambda}}
=\frac{a^{2-\lambda}-1}{2-\lambda}-\frac{3a^2+2a}{2(a+1)^{\lambda}} >0
\end{equation*}
since
\begin{equation*}
2(a+1)^{\lambda}(a^{2-\lambda} -1) > (2-\lambda) (3a^2+2a),
\end{equation*}

which is true for $a\geq 4$ and $1.8 \leq \lambda < 2$. The other case for $\lambda \geq 2$ follows in a similar manner. 
Next, we suppose $a \leq 3$ and we will prove $rest(a,4)>0$. Indeed, 
\begin{align*}
    rest(a,4)= \left (\frac{10-2a}{a} \right ) \sum_{k=1}^{a-1} \frac{1}{k^{\lambda-1}} 
-(5-a) \left ( \frac{1}{4^{\lambda-1}} + \frac{1}{5^{\lambda-1}} \right ) -2\sum_{k=a}^4 \frac{1}{k^{\lambda-1}}+ \frac{10}{a^\lambda}.
\end{align*}
For $a=1$, we obtain
\begin{align*}
   rest(1,4)= -4\left ( \frac{1}{4^{\lambda-1}} + \frac{1}{5^{\lambda-1}} \right ) -2\sum_{k=1}^4 \frac{1}{k^{\lambda-1}}+ 10>0,
\end{align*}
since $\lambda>1.8$. The other cases $a=2,3$ follow in a similar manner.

We have proven that $f_1(a,l)$ is an increasing function in $l$ for all $l\geq a+1$. Let us prove now that $f_1(a,l)>0$ by proving that $f_1(a,a+1)>0$. Indeed, we have that
\begin{align*}
f_1(a,a+1) & =\frac{2}{a}\sum_{k=1}^{a-1} \frac{1}{k^{\lambda-1}}
+\frac{2}{a^\lambda}
-\frac{1}{(a+1)^{\lambda-1}}. 
\end{align*}
In easy to see that for $a=1$, we obtain $f_1(1,2)>0$. Thus, suppose $a\geq 2$ and we approximate the previous expression with an integral
\begin{align*}
f_1(a,a+1) 
& \geq \frac{2}{a}\int_{1}^{a} \frac{1}{k^{\lambda-1}} dk
+\frac{2}{a^\lambda}-\frac{1}{(a+1)^{\lambda-1}} 
=
\frac{2}{(2-\lambda)a}(a^{2-\lambda}-1)
-\frac{1}{(a+1)^{\lambda-1}} 
+\frac{2}{a^\lambda} \notag \\
&\geq
\frac{2}{a^\lambda} \frac{a+2-\lambda}{2-\lambda}
-\frac{1}{a^{\lambda-1}} -\frac{2}{(2-\lambda)a} \notag \\
&=\frac{1}{2-\lambda} \left (\frac{\lambda a +2 (2-\lambda)}{a^\lambda}-\frac{2}{a} \right )>0,
\end{align*}
where the positivity follows for each $\lambda>1$.

\noindent
\textbf{Step 2:} In a second step we will prove that $a\mapsto f_2(a,l,\lfloor l(l+1)/a\rfloor)$ is decreasing in $a$ and in particular that $f_2(a,l, b) > 0$. First note that
\begin{equation}
f_2(a,l, \lfloor l(+1)/a\rfloor) \geq f^{Int}_2(a,l) :=  \int_{a}^{l-1} \frac{l(l+1)-ak}{k^{\lambda}} d k -\int_{l}^{l(l+1)/a} \frac{l(l+1)-ak}{k^{\lambda}} d k. 
\end{equation}

The map $a\mapsto \frac{l(l+1)-ak}{k^{\lambda}}$ is decreasing in $a$, moreover if $a$ increases then the area of integration shrinks faster in $a$, namely as $(l-a)$, than the area of integration of the second integral $(l+1)(l-a)/a$. Therefore the first term is dominant and the function becomes decreasing in $a$. Naturally $a\leq l-3$ hence it remains to show that $f^{Int}_2(l-3,l)\geq 0$. Note that $f^{Int}_2(l-3,l)$ is decreasing in $l$ by the same arguments as before. Taking $l$ to infinity we obtain $f_2^{Int}(l-3,l)$ converges to 0 from above.

\section{Proof of Proposition \ref{prop:rectangle_strip_attached2}}
\label{prop:3.5}
We distinguish again two cases: in Case (i) we suppose $k_1=0$, instead in Case (ii) we assume $k_1 \neq 0$.
\paragraph{Cases (i).} Let $k_1=0$.
We will prove that for $n=l(l+1)$ the nonlocal perimeter of a quasi-square $\mathscr{R}_{l,l+1}$ is smaller than the perimeter of a rectangle with a $k_2$-protuberance attached and area $ab+k_2=n$. 
Suppose first that the protuberance is attached along the shortest side, thus $k_2 \leq a-1$.

We can use \eqref{deltall+1} and \eqref{def_f1ll+1} to obtain again

\begin{equation}
\begin{split}
 Per_{\lambda}(\mathscr{R}^{k_2}_{a,b})- Per_{\lambda}(\mathscr{R}_{l,l+1}) &= 2f_1(a,l)+2 \overline{f}_2(a,l,k_2,b)+\frac{2{k_2}}{a}\sum_{r=1}^{a-1}\frac{a-r}{r^{\lambda}}- 2\sum_{r=1}^{k_2-1}\frac{k_2-r}{r^\lambda}-\frac{2k_2}{b^\lambda},
\end{split}
\end{equation}
where
\begin{align}\label{def:f2barra2}
    \overline{f}_2(a,l,k_2,b)= \sum_{r=a+1}^{l} \frac{l(l+1)-ar}{r^{\lambda}}-\sum_{r=l}^{b-1} \frac{l(l+1)-ar}{r^{\lambda}} \underset{\eqref{deff2ll+1}}{\geq} f_2(a,l,b).
\end{align}
From the proof of Proposition \ref{prop:rectangle_square}
 we obtain
\begin{align*}
 Per_{\lambda}(\mathscr{R}^{k_2}_{a,b})- Per_{\lambda}(\mathscr{R}_{l,l+1}) & \geq  \underbrace{2f_1(a,l)+ 2f_2(a,l,b)}_{> 0}
 +\underbrace{\frac{2{k_2}}{a}\sum_{r=1}^{a-1}\frac{a-r}{r^{\lambda}}- 2\sum_{r=1}^{k_2-1}\frac{k_2-r}{r^\lambda}-\frac{2k_2}{b^\lambda}}_{(\textnormal{Eq.A.1})}.
\end{align*}
Thus, we conclude that $(\textnormal{Eq.A.1}) > 0$ by arguing in the same manner of Case (i) of Proposition \ref{prop:rectangle_strip_attached1}.
Next, assume that the protuberance is attached along the longer side, thus $k_2 \leq b-1$. We observe that if $k_2 \leq a$, then the rectangle with side lengths $a,b$ and the protuberance $k_2$ attached along the shorter side has smaller nonlocal perimeter than $\mathscr{R}^{k_2}_{a,b}$. Indeed, denoting by $\overline{\mathscr{R}}^{k_2}_{a,b}$ the last one, we again have that the difference is equal to 
\begin{align*}
    Per_{\lambda}(\mathscr{R}^{k_2}_{a,b})- Per_{\lambda}(\overline{\mathscr{R}}^{k_2}_{a,b})= k_2 \sum_{r=a+1}^{b} \frac{1}{r^\lambda} >0
\end{align*}
and we conclude the claim in an analogous manner as the in the proof of Proposition \ref{prop:rectangle_strip_attached1}. 

Thus, suppose that $a \leq k_2 \leq b-1$,

we can use \eqref{deltall+1}, \eqref{def_f1ll+1}, \eqref{def:f2barra2} to obtain again
\begin{equation*}
\begin{split}
 Per_{\lambda}(\mathscr{R}^{k_2}_{a,b})- Per_{\lambda}(\mathscr{R}_{l,l+1}) &= 2f_1(a,l)+2 \overline{f}_2(a,l,k_2,b)+\frac{2{k_2}}{a}\sum_{r=1}^{a-1}\frac{a-r}{r^{\lambda}}- 2\sum_{r=1}^{k_2-1}\frac{k_2-r}{r^\lambda}+2k_2\sum_{r=a+1}^{b-1}\frac{1}{r^\lambda}.
\end{split}
\end{equation*}
From \eqref{def:f2barra2} and the proof of Proposition \ref{prop:rectangle_strip_attached1}
 we obtain analogously
\begin{align}
 Per_{\lambda}(\mathscr{R}^{k_2}_{a,b})- Per_{\lambda}(\mathscr{R}_{l,l+1}) & \geq  \underbrace{2f_1(a,l)+ 2f_2(a,l,b)}_{>0}
 +\underbrace{\frac{2{k_2}}{a}\sum_{r=1}^{a-1}\frac{a-r}{r^{\lambda}}- 2\sum_{r=1}^{k_2-1}\frac{k_2-r}{r^\lambda}+2k_2\sum_{r=a+1}^{b-1}\frac{1}{r^\lambda}}_{(\textnormal{Eq.A.2})}
\end{align}
and we can conclude by arguing as before, that
$\text{argmin}_{\mathcal{P} \in \mathcal{M}_{n}} \{Per_{\lambda}(\mathcal{P}) \}= \{ \mathscr{R}_{l,l+1}\}$ for $n=l(l+1)$.

\paragraph{Case (ii).} Let $k_1 \neq 0$ and consider a quasi-square with a protuberance attached along the shorter side and area $n=l(l+1)+k_1$, $\mathscr{R}_{l,l+1}^{k_1}$. First, we note that if $\overline{\mathscr{R}}_{l,l+1}^{k_1}$ is a quasi-square with a protuberance attached along the longer side and area $n=l(l+1)+k_1$, then 
    \begin{align*}
        Per_\lambda(\overline{\mathscr{R}}_{l,l+1}^{k_2})-Per_\lambda(\overline{\mathscr{R}}_{l,l+1}^{k_2})
        =\frac{k_2}{(l+1)^\lambda}>0.
    \end{align*}

Moreover, we observe that the classical perimeter of $\mathscr{R}_{l,l+1}^{k_1}$ is $per(\mathscr{R}_{l,l+1}^{k_1})=2(2l+2)$ and we will prove the following two results.
\begin{itemize}
\item[(ii.1)] Let $0 \leq k_2 \leq b-1$ and consider $\mathscr{R}_{a,b}^{k_2} \in \mathcal{M}_n$ with classical perimeter greater than the classical perimeter of $\mathscr{R}_{l,l+1}^{k_1}$, i.e. $2(a+b+1)=2(2l+2+C)$ for some $C \geq 1$, $C \in \mathbb{N}$. Then, 
\[
Per_{\lambda}(\mathscr{R}_{a,b}^{k_2}) > Per_{\lambda}(\mathscr{R}_{l,l+1}^{k_1}).
\]
\item[(ii.2)] Let $0 \leq k_2 \leq b-1$ and consider a rectangle $\overline{\mathscr{R}}_{a,b}^{k_2} \in \mathcal{M}_n$ with a protuberance attached along the longer side and classical perimeter greater than the classical perimeter of $\mathscr{R}_{l,l+1}^{k_1}$, i.e. $2(a+b+1)=2(2l+2+C)$ for some $C \geq 1$, $C \in \mathbb{N}$. Then,
\[
Per_{\lambda}(\overline{\mathscr{R}}_{a,b}^{k_2}) > Per_{\lambda}(\mathscr{R}_{l,l+1}^{k_1}).
\]
\end{itemize}

\subparagraph{Case (ii.1).} We suppose without of generality that $a<b$.
By using \eqref{gapquasisquarell+1} and \eqref{rectanglell+1} we obtain
\begin{align}\label{eq:difference_quasisquare_rectangle_proof_k}
    Per_{\lambda}(\mathscr{R}_{a,b}^{k_2})-Per_{\lambda}(\mathscr{R}_{l,l+1}^{k_1})
&=2 \tilde{h}_1(a,l, k_1)+ 2 \tilde{g}_2(a,l,k_1,b)
        +\frac{k_2}{a} \sum_{r=1}^{a-1} \frac{a-r}{r^\lambda} 
    - \frac{k_2}{b^\lambda}-\sum_{r=1}^{k_2-1} \frac{k_2-r}{r^\lambda} ,
\end{align}
where
\begin{align}
    & \tilde{h}_1(a,l, k_1)= 
    f_1(a,l)-\frac{k_1}{a} \sum_{r=1}^{a-1} \frac{a-r}{r^\lambda} 
    + \frac{k_1}{l^\lambda}+\sum_{r=1}^{k_1-1} \frac{k_1-r}{r^\lambda}, \label{def:h_tilde1}\\
    &\tilde{g}_2(a,l,k_1,b)= \sum_{r=a}^{l-1} \frac{l(l+1)-ar}{r^\lambda} - \sum_{r=l}^{b-1} \frac{l(l+1)+k_1-ar}{r^\lambda},\nonumber
  \end{align}
and $f_1(a,l)$ is defined in \eqref{def_f1ll+1}.  Call
\begin{align}
     \tilde{h}_2(a,l,k_1,b)= \sum_{r=a+1}^{l-1} \frac{l(l+1)-ar}{r^{\lambda}}-\sum_{r=l}^{b-1} \frac{l(l+1)+k_1-ar}{r^{\lambda}}, \label{def:h_tilde2}
\end{align}
and note that $\tilde{g}_2(a,l,k_1,b) \geq    \tilde{h}_2(a,l,k_1,b)$.

Then,
\begin{align*}
    Per_{\lambda}(\mathscr{R}_{a,b}^{k_2})-Per_{\lambda}(\mathscr{R}_{l,l+1}^{k_1}) 
    & \geq 2 \tilde{f}_1(a,l, k_1)+ 2 \tilde{h}_2(a,l,k_1, b).
\end{align*}

We conclude once we showed that both functions are strictly positive by the arguments below.
\begin{itemize}
\item[\text{(ii.1.a)}] First of all, we show that $\tilde{h}_2(a,l,k_1,b)>0$. We will prove that $k_1\mapsto \tilde{h}_2\left(a,l,k_1, \lfloor\frac{l(l+1)+k_1}{a} \rfloor \right)$ is decreasing in $k_1$. 
\begin{align*}
    \tilde{h}_2\left(a,l,k_1+1, \lfloor\frac{l(l+1)+k_1}{a} \rfloor \right)&-\tilde{h}_2 \left(a,l,k_1,\lfloor\frac{l(l+1)+k_1}{a} \rfloor \right) \\
    &=-\sum_{r= \lfloor\frac{l(l+1)+k_1}{a} \rfloor}^{\lfloor \frac{l(l+1)+k_1+1}{a} \rfloor-1} \frac{l(l+1)+k_1-ar}{r^{\lambda}}
    -\sum_{r=l}^{\lfloor \frac{l(l+1)+k_1+1}{a} \rfloor-1} \frac{k_1}{r^{\lambda}} <0.
\end{align*}
Then, we can consider the function $\tilde{h}_2(a,l,l-1, \lfloor\frac{l(l+1)+k_1}{a} \rfloor )$, i.e., 
\begin{align}
  \tilde{h}_2\left(a,l,l-1,\lfloor\frac{l(l+1)+k_1}{a} \rfloor \right) &= \sum_{r=a+1}^{l-1} \frac{l(l+1)-ar}{r^{\lambda}}-\sum_{r=l}^{\lfloor\frac{l(l+1)+l-1}{a}\rfloor-1} \frac{l(l+1)+l-1-ar}{r^{\lambda}} \\
   & \geq \sum_{r=a+1}^{l-1} \frac{l(l+1)-ar}{r^{\lambda}}-\sum_{r=l- \lfloor\frac{l-1}{a \rfloor}}^{\lfloor\frac{l(l+1)}{a}\rfloor-1} \frac{l(l+1)-ar}{r^{\lambda}} \label{eq:wa}
\end{align}
and we can conclude by arguing as in the second step of the proof of Proposition \ref{prop:rectangle_quasisquare}.

\item[$(ii.1.b)$] It remains to show that $\tilde{h}_1(a,l, k_1) > 0$.
We can use the results in the proof of Proposition \ref{prop:rectangle_quasisquare} in order to prove the positivity of $\tilde{h}_1(a,l, k_1)$.
First, we show that $l \mapsto \tilde{h}_1(a,l, k_1)$ is increasing in $l$. We write
\begin{equation*}
\begin{split}
\tilde{h}_1(a,l+1, k_1) &=  f_1(a,l+1) + \frac{k_1}{(l+1)^\lambda}-\frac{k_1}{a} \sum_{r=1}^{a-1} \frac{a-r}{r^\lambda} 
   +\sum_{r=1}^{k_1-1} \frac{k_1-r}{r^\lambda}, \\
& = f_1(a,l) + rest(a,l)  + \frac{k_1}{(l+1)^\lambda}-\frac{k_1}{a} \sum_{r=1}^{a-1} \frac{a-r}{r^\lambda} 
   +\sum_{r=1}^{k_1-1} \frac{k_1-r}{r^\lambda}, \\
&=\tilde{h}_1(a,l, k_1)+\underbrace{rest(a,l)+ \frac{k_1}{(l+1)^\lambda} - \frac{k_1}{l^\lambda}}_{=\overline{rest}(a,l)},
\end{split}
\end{equation*}
where $rest(a,l)$ is defined in \eqref{def_rest_ll+1}. 
The function is increasing in $l$, if the term $\overline{rest}(a,l)>0$ for all $l \geq a+1$. 
\begin{align}
    \overline{rest}(a,l+1)-\overline{rest}(a,l) &=rest(a,l+1)-rest(a,l)- \frac{k_1}{(l+1)^\lambda}+ \frac{k_1}{(l+2)^\lambda} + \frac{k_1}{l^\lambda}- \frac{k_1}{(l+1)^\lambda} \notag \\
    & \geq k_1 \left ( \frac{1}{l^\lambda} + \frac{1}{(l+2)^\lambda}-\frac{2}{(l+1)^\lambda} \right )>0,
\end{align}
where we obtain the first inequality as in the proof of Proposition \ref{prop:rectangle_quasisquare}. The positivity is due to decrease of the function $f(x)=\frac{1}{x^\lambda}-\frac{1}{(x+1)^\lambda}$ for $\lambda>1$.

\end{itemize}

\subparagraph{Case (ii.2).} 
We distinguish again three cases:
\begin{itemize}
\item[$(ii.2.a)$] $k_2 \leq k_1$. By using \eqref{eq:quasi_square_rectangle_GAP} we obtain the following
\begin{align*}
    Per_{\lambda}(\mathscr{R}_{a,b}^{k_2})-Per_{\lambda}(\mathscr{R}_{l,l+1}^{k_1})
& \geq  (Per_{\lambda}(\mathscr{R}_{a,b})- Per_{\lambda}(\mathscr{R}_{l,l+1}))
-2\sum_{i=1}^{k_1-k_2} \zeta(\lambda, i)
-2 (k_1-k_2) \zeta(\lambda, l+1) \notag \\
&= 2\tilde{h}_1(a,l, k_1-k_2)+2\tilde{h}_2(a,l,k_1-k_2,b),
\end{align*}
where the functions $\tilde h_1, \tilde h_2$ are defined in \eqref{def:h_tilde1} and \eqref{def:h_tilde2}. We note that if $k_2=k_1$, then $\tilde{h}_1(a,l, k_1-k_2)=f_1(a,l)$ and $\tilde{h}_2(a,l, k_1-k_2,b)=f_2(a,l,b)$ where $f_1,f_2$ are defined in \eqref{def_f1ll+1}, \eqref{deff2ll+1}. 
Then, we know that $\tilde{h}_2(a,l, k, b)$ is decreasing in $k$ as we proved in Case (ii.1), and $k_1-k_2 \leq l-1$, so $\tilde{h}_2\left(a,l, l-1, \lfloor \frac{l(l+1)+l}{a} \rfloor \right) > 0$ by \eqref{eq:wa}. Moreover, $\tilde{h}_1(a,l, k_1-k_2) \geq 0$ analogously to Case (ii.1). 

\item[$(ii.2.b)$] Let $k_2 > k_1$.  
By using \eqref{eq:quasi_square_rectangle_GAP} we obtain the following 
\begin{align*}
  &  Per_{\lambda}(\mathscr{R}_{a,b}^{k_2})-Per_{\lambda}(\mathscr{R}_{l,l+1}^{k_1})\\
 &\geq 2\tilde{h}_1(a,l, k_2-k_1)+ \underbrace{2\tilde{h}_2(a,l,k_2-k_1,b)
+2\sum_{i=1}^{k_2-k_1}\zeta(\lambda, i+k_1) +2\sum_{i=1}^{k_2-k_1}\zeta(\lambda,i)
+4 (k_2-k_1) \zeta(\lambda, l+1)}_{(\textnormal{Eq.A.3})},
\end{align*}
where the functions $\tilde h_1, \tilde h_2$ are defined in \eqref{def:h_tilde1} and in \eqref{def:h_tilde2}.
We observe that $\tilde{h}_1(a,l, k_2-k_1) > 0$ as we proved before. The proof is concluded, once we show that $(\textnormal{Eq.A.3}) > 0$. Indeed,

we get that 
\begin{align*}
   (\textnormal{Eq.A.3}) > 
      \sum_{r=a+1}^{l-1} \frac{l^2-ar}{r^{\lambda}}-\sum_{r=l}^{\frac{l^2+k_2-k_1}{a}-1} \frac{l^2-ar}{r^{\lambda}}
\end{align*}
and conclude by arguing as in the second step of the proof of Proposition \ref{prop:rectangle_quasisquare}.
\end{itemize}

We have proven that for $n=l(l+1)+k_1$, we have 
    \begin{align*}
        \text{argmin}_{\mathcal{P} \in \mathcal{M}_{n}} \{Per_{\lambda}(\mathcal{P}) \} \subset 
        \{ \mathscr{R}_{l,l+1}^{k_1} \in \mathcal{M}_n \} 
        \cup \left\{ \mathscr{R}_{a,b}^{k_2} \in \mathcal{M}_n \, | \, per(\mathscr{R}_{a,b}^{k_2}) = per(\mathscr{R}_{l,l+1}^{k_1} ) \right\}.
    \end{align*}
\end{appendix}

\begin{acks}[Acknowledgments]
V.J. is grateful for the partial support of “Gruppo Nazionale per l’Analisi Matematica, la Probabilit\`a e le loro Applicazioni” (GNAMPA-INdAM).
\end{acks}

\begin{funding}
W.M.R. and V.J. are funded by the Vidi grant VI.Vidi.213.112 from the Dutch Research Council. 
\end{funding}


\bibliographystyle{abbrv}
\bibliography{biblio}

@article{alonso1996three,
  title={The three dimensional polyominoes of minimal area},
  author={Alonso, Laurent and Cerf, Rapha{\"e}l},
  journal={The Electronic Journal of Combinatorics},
  volume={3},
  number={1},
  pages={R27},
  year={1996}
}

@article{caffa2010,
	affiliation = {University of Texas at Austin, Department of Mathematics, Austin, TX 78712; Columbia University, Mathematics Department, 2990 Broadway, New York, NY 10027; Université Paul Sabatier, Institut de Mathématiques, (UMR CNRS 5219), 31062 Toulouse Cedex 4, FRANCE},
	author = {Caffarelli, L. and Roquejoffre, J.‐M. and Savin, O.},
	doi = {10.1002/cpa.20331},
	journal = {Communications on Pure and Applied Mathematics},
	language = {Undetermined},
	month = {6},
	number = {9},
	pages = {1111 - 1144},
	title = {Nonlocal minimal surfaces},
	volume = {63},
	year = {2010},
}

@article{figa2015,
  title={Isoperimetry and stability properties of balls with respect to nonlocal energies},
  author={Figalli, Alessio and Fusco, Nicola and Maggi, Francesco and Millot, Vincent and Morini, Massimiliano},
  journal={Communications in Mathematical Physics},
  volume={336},
  pages={441--507},
  year={2015},
  publisher={Springer}
}

@article{vanenter2019,
  title={Nucleation for one-dimensional long-range Ising models},
  author={van Enter, Aernout CD and Kimura, Bruno and Ruszel, Wioletta and Spitoni, Cristian},
  journal={Journal of Statistical Physics},
  volume={174},
  pages={1327--1345},
  year={2019},
  publisher={Springer}
}

@article{coquille2018,
  title={Absence of dobrushin states for 2 d long-range ising models},
  author={Coquille, Loren and van Enter, Aernout CD and Le Ny, Arnaud and Ruszel, Wioletta M},
  journal={Journal of Statistical Physics},
  volume={172},
  pages={1210--1222},
  year={2018},
  publisher={Springer}
}

@article{savin2012,
  title={{$\Gamma$}-convergence for nonlocal phase transitions},
  author={Savin, Ovidiu and Valdinoci, Enrico},
  journal={Annales de l'Institut Henri Poincar{\'e} C, Analyse non lin{\'e}aire},
  volume={29},
  number={4},
  pages={479--500},
  year={2012},
  organization={Elsevier}
}

@article{cozzi2017,
  title={Planelike interfaces in long-range Ising models and connections with nonlocal minimal surfaces},
  author={Cozzi, Matteo and Dipierro, Serena and Valdinoci, Enrico},
  journal={Journal of Statistical Physics},
  volume={167},
  pages={1401--1451},
  year={2017},
  publisher={Springer}
}

@article{manzo2004,
  title={On the essential features of metastability: tunnelling time and critical configurations},
  author={Manzo, Francesco and Nardi, Francesca R and Olivieri, Enzo and Scoppola, Elisabetta},
  journal={Journal of Statistical Physics},
  volume={115},
  number={1},
  pages={591--642},
  year={2004},
  publisher={Springer}
}

@book{olivieri2005,
  title={Large deviations and metastability},
  author={Olivieri, Enzo and Vares, Maria Eul{\'a}lia},
  number={100},
  year={2005},
  publisher={Cambridge University Press}
}

@article{chak2011,
  title={Modulation and correlation lengths in systems with competing interactions},
  author={Chakrabarty, Saurish and Nussinov, Zohar},
  journal={Physical Review B—Condensed Matter and Materials Physics},
  volume={84},
  number={14},
  pages={144402},
  year={2011},
  publisher={APS}
}

@article{de2000,
  title={Dipolar effects in magnetic thin films and quasi-two-dimensional systems},
  author={De’Bell, K and MacIsaac, AB and Whitehead, JP},
  journal={Reviews of Modern Physics},
  volume={72},
  number={1},
  pages={225},
  year={2000},
  publisher={APS}
}

@article{giuliani2012,
  title={Striped periodic minimizers of a two-dimensional model for martensitic phase transitions},
  author={Giuliani, Alessandro and M{\"u}ller, Stefan},
  journal={Communications in Mathematical Physics},
  volume={309},
  number={2},
  pages={313--339},
  year={2012},
  publisher={Springer}
}

@article{giuliani2016,
  title={Periodic striped ground states in Ising models with competing interactions},
  author={Giuliani, Alessandro and Seiringer, Robert},
  journal={Communications in Mathematical Physics},
  volume={347},
  pages={983--1007},
  year={2016},
  publisher={Springer}
}

@article{daneri2019,
  title={Exact periodic stripes for minimizers of a local/nonlocal interaction functional in general dimension},
  author={Daneri, Sara and Runa, Eris},
  journal={Archive for Rational Mechanics and Analysis},
  volume={231},
  pages={519--589},
  year={2019},
  publisher={Springer}
}

@article{degiorgi,
  title={Sulla propriet{\`a} isoperimetrica dell’ipersfera, nella classe degli insiemi aventi frontiera orientata di misura finita},
  author={Giorgi, E de},
  journal={Atti Accad. Naz. Lincei. Mem. Cl. Sci. Fis. Mat. Nat. Sez. I (8)},
  volume={5},
  pages={33--44},
  year={1958}
}

@article{elliott1961,
  title={Phenomenological discussion of magnetic ordering in the heavy rare-earth metals},
  author={Elliott, R Jo},
  journal={Physical Review},
  volume={124},
  number={2},
  pages={346},
  year={1961},
  publisher={APS}
}

@article{randa1985,
  title={Axial next-nearest-neighbor Ising (ANNNI) and extended-ANNNI models in external fields},
  author={Randa, J},
  journal={Physical Review B},
  volume={32},
  number={1},
  pages={413},
  year={1985},
  publisher={APS}
}

@article{Affonso,
  title={Long-range Ising models: Contours, phase transitions and decaying fields},
  author={Affonso, L and Bissacot, R. and Endo, E.O. and  Handa, S.},
  journal={Journal of European Mathematical Society},
  year={2024},
}

\end{document}